%%%
\documentclass[reqno, 11pt]{amsart}
\usepackage[mathscr]{eucal}
\usepackage[all]{xy}
\usepackage{mathrsfs}
\usepackage{xypic}
\usepackage{amsfonts}
\usepackage{amsmath}
\usepackage{amsthm,bm}
\usepackage{amssymb,enumerate}
\usepackage{latexsym}
\usepackage{tabularx}
\usepackage{graphicx}
\usepackage{pict2e}
\usepackage{tikz}
\usepackage[pagebackref]{hyperref}
\usepackage[text={160mm,240mm},centering]{geometry}
\geometry{a4paper}
\input diagxy
\input xy

\linespread{1.20}

% THEOREM Environments ------------------------------------
\newtheorem{thm}{Theorem}[section]
\newtheorem{mthm}[thm]{Main Theorem}
\newtheorem{cor}[thm]{Corollary}
\newtheorem{conjecture}[thm]{Conjecture}
\newtheorem{lem}[thm]{Lemma}
\newtheorem{prop}[thm]{Proposition}
\newtheorem{quest}[thm]{Question}

\theoremstyle{definition}
\newtheorem{defn}[thm]{Definition}
\newtheorem{rem}[thm]{Remark}

\newtheorem{ex}[thm]{Example}
\numberwithin{equation}{section}

\newcommand{\im}{\textmd{im}}
% -----------------------------------------------------------
% -----------------------------------------------------------
\begin{document}

\title{Dolbeault cohomologies of blowing up complex manifolds}

\author{Sheng Rao}
\address{School of Mathematics and Statistics, Wuhan University, Wuhan 430072, P. R. China}
\email{raoshengmath@gmail.com, likeanyone@whu.edu.cn}%
\thanks{Rao is partially supported by NSFC (Grant No. 11671305, 11771339). S. Yang and X. Yang are supported by NSFC (Grant
No. 11571242, 11701414, 11701051) and the China Scholarship Council.}

\author{Song Yang}
\address{Center for Applied Mathematics, Tianjin University, Tianjin 300072, P. R. China}%
\email{syangmath@tju.edu.cn}%

\author{Xiangdong Yang}
\address{Department of Mathematics, Chongqing University, Chongqing 401331, P. R. China}
\email{xiangdongyang2009@gmail.com, math.yang@cqu.edu.cn}

\subjclass[2010]{Primary 32S45; Secondary 14E05, 18G40, 14D07}
\keywords{Modifications, resolution of singularities; Rational and birational maps; Spectral sequences, hypercohomology; Variation of Hodge structures}

\date{\today}

% -----------------------------------------------------------

\begin{abstract}
We prove a blow-up formula for Dolbeault cohomologies of compact complex manifolds by introducing relative Dolbeault cohomology.
As corollaries, we present a uniform proof for bimeromorphic invariance of $(\bullet,0)$- and $(0,\bullet)$-Hodge numbers on a compact complex manifold,
and obtain the equality for the numbers of the blow-ups and blow-downs in the weak factorization of the bimeromorphic map between two compact complex manifolds with equal $(1,1)$-Hodge number or equivalently second Betti number.
Many examples of the latter one are listed.
Inspired by these, we obtain the bimeromorphic stability for degeneracy of the Fr\"olicher spectral sequences at $E_1$ on compact complex threefolds and fourfolds.
\end{abstract}

% -----------------------------------------------------------
\maketitle
% -----------------------------------------------------------
\setcounter{tocdepth}{1}

\tableofcontents
%==================================

\section{Introduction}
In algebraic geometry and differential geometry, Dolbeault cohomology (named after Pierre Dolbeault) is an analog of de Rham cohomology for complex manifolds.
For a complex manifold $X$, its Dolbeault cohomology groups $H_{\bar{\partial}}^{p,q}(X)$ depend on a pair of integers $p$ and $q$,
and are realized as a subquotient of the space of complex differential forms of degree $(p,q)$.
In complex geometry, blow-up or blow-down is a type of geometric transformation which replaces a subspace of a given complex space with all the directions pointing out of that subspace.
In particular, blow-up is the most fundamental transformation in birational geometry,
because every birational projective morphism is a blow-up morphism with a possibly singular center.
Besides its importance in describing birational transformations,
the blow-up also provides us with an important way of constructing new complex spaces.
For instance, most procedures for resolution of singularities proceed by blowing up singularities until they become smooth.

Due to the de Rham Theorem it is known that the de Rham cohomology of a smooth manifold is a topological invariant.
Compared with the de Rham cohomology, the Dolbeault cohomology of a complex manifold depends on the complex structures,
i.e., it is a biholomorphic invariant.
On the geometry of the blow-up of a complex manifold with a smooth center,
the de Rham blow-up formula shows the variant of de Rham cohomology under the blow-up transformations.
In literatures, there are many different versions of blow-up formulas for various (co)homology theories;
for instance, the cyclic homology \cite{CHSW08}, the algebraic $K$-theory \cite{KST17}, and the topological Hochschild homology \cite{BM12}.

The purpose of this paper is to study the behavior of Dolbeault cohomologies under blow-up along a smooth center.
More precisely, we prove a blow-up formula for Dolbeault cohomologies of compact complex manifolds by Cordero's Hirsch Lemma  \cite[Lemma 18]{CFGU00} and introducing relative Dolbeault cohomology in Subsection \ref{relative}.
\begin{mthm}\label{main-thm}
Let $X$ be a compact complex manifold with $\emph{dim}_{\mathbb{C}}\,X=n$ and $Z\subseteq X$ a closed complex submanifold of complex codimension $r\geq2$.
Suppose that $\pi:\tilde{X}\rightarrow X$ is the blow-up of  $X$ along $Z$.
Then for any $0\leq p,q\leq n$, there is an isomorphism
\begin{equation}\label{buf}
  H^{p,q}_{\bar{\partial}}(\tilde{X}) \cong  H^{p,q}_{\bar{\partial}}(X)\oplus \Big(\bigoplus_{i=1}^{r-1} H^{p-i,q-i}_{\bar{\partial}}(Z) \Big).
\end{equation}

\end{mthm}

As a byproduct, we obtain the isomorphism of relative Dolbeault cohomologies
$
H^{p,q}_{\bar{\partial}}(X,Z)\cong H^{p,q}_{\bar{\partial}}(\tilde{X},E)
$
with the exceptional divisor $E$, induced by the blow-up morphism $\pi:\tilde{X}\rightarrow X$
in Proposition \ref{tech-prop}.

This paper is much motivated by an interesting question in \cite[Introduction]{a}.
\begin{quest}\label{quest}
If $X$ is a $\partial\bar\partial$-manifold, is its modification $\tilde{X}$ still a $\partial\bar\partial$-manifold?
\end{quest}
Recall that a compact complex manifold is a \emph{$\partial\bar\partial$-manifold} if the standard \emph{$\partial\bar\partial$-lemma} holds on it, that is, for every pure-type $d$-closed form on this manifold, the properties of
$d$-exactness, $\partial$-exactness, $\bar{\partial}$-exactness and $\partial\bar{\partial}$-exactness are equivalent.
The converse of this question is confirmed by \cite{par} or \cite[Theorem 5.22]{DGMS}. Here we can answer Question \ref{quest} positively in the threefold case by the blow-up formulae \eqref{buf}, \eqref{1} of Dolbeault and de Rham cohomologies and an equivalent characterization of $\partial\bar\partial$-manifold as in \cite{astt}.

From the bimeromorphic geometric point of view, a blow-up transformation is a canonical and most important example of bimeromorphic map.
Conversely, we have the celebrated weak factorization theorem, (a part of) which is to be used in many occasions of this paper.
\begin{thm}[{\cite[Theorem 0.3.1]{akmw} and \cite{wl}}]\label{wft}
Let $\pi:\tilde{X}\dashrightarrow X$ be a bimeromorphic map between two compact complex manifolds  as in Definition \ref{bimero}.
Let $U$ be an open set where $\pi$ is an isomorphism.
Then $\pi$ can be factored into a sequence of blow-ups and blow-downs along irreducible nonsingular centers disjoint from $U$.
That is, to any such $\pi$ we associate a diagram
\begin{equation}\label{wf}
\pi:\tilde{X}=X_0\stackrel{\pi_1}{\dashrightarrow} X_1\stackrel{\pi_2}{\dashrightarrow}\cdots\stackrel{\pi_{i-1}}{\dashrightarrow} X_{i-1}\stackrel{\pi_{i}}{\dashrightarrow} X_{i}\stackrel{\pi_{i+1}}{\dashrightarrow}\cdots\stackrel{\pi_{l}}{\dashrightarrow} X_l=X,
\end{equation}
where
\begin{enumerate}
    \item[$(i)$]
              $\pi=\pi_l\circ\cdots\circ\pi_1;$
    \item [$(ii)$]
              $\pi_i$ are isomorphisms on $U$;
    \item [$(iii)$]
either $\pi_i:X_{i-1}\dashrightarrow X_{i}$ or $\pi_i^{-1}:X_{i}\dashrightarrow X_{i-1}$ is a morphism obtained by blowing up a nonsingular center disjoint from $U$.
 \end{enumerate}
\end{thm}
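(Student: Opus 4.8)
The plan is to reduce the factorization of an arbitrary bimeromorphic map to a combinatorial problem in toroidal geometry, following the strategy of \cite{akmw} and its analytic counterpart \cite{wl}. First I would replace the bimeromorphic map $\pi:\tilde{X}\dashrightarrow X$ by an honest morphism: using Hironaka's resolution of singularities, which is available in the analytic category, I resolve the graph of $\pi$ to obtain a smooth compact $W$ together with proper bimeromorphic morphisms $W\to\tilde{X}$ and $W\to X$ that are isomorphisms over $U$. It then suffices to factor each of these two \emph{morphisms} into blow-ups and blow-downs with nonsingular centers disjoint from $U$, and hence to treat a single proper bimeromorphic morphism $f:W\to X$ between compact complex manifolds.

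The core idea is then to construct a \emph{birational cobordism} in the sense of W\l odarczyk: a normal complex space $B$ equipped with a $\mathbb{C}^{\ast}$-action whose ``lower'' and ``upper'' geometric quotients recover the two ends of a chain of elementary modifications connecting $W$ and $X$. A Morse-theoretic analysis of the $\mathbb{C}^{\ast}$-action stratifies $B$ by its fixed loci, and as one crosses each critical value the geometric quotient changes by a single elementary transformation, in the spirit of variation of GIT quotients. Each such transformation is, after the cobordism has been suitably desingularized, a blow-up followed by a blow-down; the requirement that every $\pi_i$ be an isomorphism over $U$ is enforced by arranging the cobordism to be trivial over the preimage of $U$, so that all the centers appearing in \eqref{wf} are automatically disjoint from $U$.

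The elementary transformations produced in this way are a priori only \emph{toroidal}, i.e. weighted blow-ups along combinatorially described, possibly singular centers. To convert them into the ordinary smooth blow-ups and blow-downs demanded by the statement, I would invoke the combinatorial factorization of toroidal birational maps: by the theory of fans and their subdivisions, Morelli's $\pi$-desingularization of the cobordism fan together with the weak form of Oda's conjecture factors each toroidal elementary modification into genuine blow-ups and blow-downs with irreducible nonsingular centers. Functoriality of resolution and of the torification then guarantees that these centers remain disjoint from $U$ throughout, which assembles the whole procedure into the diagram \eqref{wf} satisfying conditions $(i)$--$(iii)$.

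The hard part, and the technical heart of both \cite{akmw} and \cite{wl}, will be the \emph{torification} step: arranging that $f$ becomes toroidal with respect to a suitable pair of simple normal crossing divisors on $W$ and $X$, so that the combinatorial machinery can be applied. In the bimeromorphic category this is considerably more delicate than in the algebraic one, since quasi-projectivity is unavailable and one must instead work locally and glue using analytic resolution and a relative version of the cobordism. I do not expect to strengthen the conclusion beyond the \emph{weak} factorization: the assertion that all blow-ups may be performed before all blow-downs would require a global ordering of the critical values of the cobordism that these methods do not provide, and it remains open.
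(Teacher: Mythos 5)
This theorem is not proved in the paper at all: it is quoted verbatim as an external result, with the proof delegated entirely to the cited works of Abramovich--Karu--Matsuki--W{\l}odarczyk and W{\l}odarczyk. Your outline (resolving the graph to reduce to a proper bimeromorphic morphism, birational cobordism with a $\mathbb{C}^{\ast}$-action, toroidal/combinatorial factorization via Morelli's $\pi$-desingularization, and torification as the technical heart) is a faithful reconstruction of exactly the strategy of those cited references, including the correct caveat that only weak, not strong, factorization is obtained, so it is essentially the same approach the paper relies on.
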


As a direct application of Theorem \ref{main-thm}, we have
\begin{cor}\label{app1}
Let $X$ and $\tilde{X}$ be two bimeromorphically equivalent $n$-dimensional compact complex manifolds.
Then for $0\leq p,q\leq n$, there hold the Dolbeault cohomology isomorphisms
\begin{equation}\label{iso-p00q'}
H^{p,0}_{\bar{\partial}}(\tilde{X}) \cong  H^{p,0}_{\bar{\partial}}(X),\quad H^{0,q}_{\bar{\partial}}(\tilde{X}) \cong  H^{0,q}_{\bar{\partial}}(X).
\end{equation}
\end{cor}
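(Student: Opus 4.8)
The plan is to combine the weak factorization theorem (Theorem~\ref{wft}) with the blow-up formula~\eqref{buf} of the Main Theorem, the decisive point being that Dolbeault cohomology with a negative bidegree index vanishes.

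First I would reduce the statement for a general bimeromorphic equivalence to the case of a single blow-up. By Theorem~\ref{wft}, any bimeromorphic map between $X$ and $\tilde{X}$ decomposes into a finite chain~\eqref{wf} in which each elementary arrow $\pi_i$, or its inverse, is the blow-up of a compact complex manifold along an irreducible nonsingular center. Because an isomorphism of cohomology groups is symmetric, it is irrelevant whether a given step in the chain is a blow-up or a blow-down; in either direction it suffices to prove that the $(\bullet,0)$- and $(0,\bullet)$-Dolbeault numbers are unchanged across one blow-up. Composing the resulting isomorphisms over the finitely many steps of~\eqref{wf} then yields~\eqref{iso-p00q'}.

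Next, for a single blow-up $\pi:\tilde{X}\rightarrow X$ along a smooth center $Z$ of codimension $r$, I would specialize the formula~\eqref{buf}. A blow-up along a center of codimension $r=1$ is an isomorphism, so I may assume $r\geq 2$ and invoke the Main Theorem directly. Setting $q=0$ gives
\[
H^{p,0}_{\bar{\partial}}(\tilde{X}) \cong H^{p,0}_{\bar{\partial}}(X)\oplus\Big(\bigoplus_{i=1}^{r-1} H^{p-i,-i}_{\bar{\partial}}(Z)\Big),
\]
while setting $p=0$ gives
\[
H^{0,q}_{\bar{\partial}}(\tilde{X}) \cong H^{0,q}_{\bar{\partial}}(X)\oplus\Big(\bigoplus_{i=1}^{r-1} H^{-i,q-i}_{\bar{\partial}}(Z)\Big).
\]
The key observation is that every correction term on the right-hand side carries a negative bidegree index: in the first sum the second index $-i$ is negative for all $1\leq i\leq r-1$, and in the second sum the first index $-i$ is likewise negative. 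Since $H^{a,b}_{\bar{\partial}}(Z)=0$ whenever $a<0$ or $b<0$, all of these terms vanish, leaving precisely $H^{p,0}_{\bar{\partial}}(\tilde{X})\cong H^{p,0}_{\bar{\partial}}(X)$ and $H^{0,q}_{\bar{\partial}}(\tilde{X})\cong H^{0,q}_{\bar{\partial}}(X)$ for one blow-up.

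I do not expect a serious obstacle: once the Main Theorem is available the argument is essentially bookkeeping, and the vanishing of the negatively indexed correction terms does the real work. The only point deserving care is the passage from a single blow-up to an arbitrary bimeromorphic map, where one must appeal to the weak factorization theorem and note that the invariance is insensitive to the direction of each elementary transformation.
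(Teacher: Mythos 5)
Your proposal is correct and follows essentially the same route as the paper: reduce to a single blow-up via the weak factorization Theorem~\ref{wft}, apply the blow-up formula~\eqref{buf} with $p=0$ or $q=0$, and observe that all correction terms $H^{p-i,q-i}_{\bar{\partial}}(Z)$ vanish because one of their bidegree indices is negative. The only cosmetic difference is that you explicitly address the direction-insensitivity of each elementary step and the codimension-one case, points the paper leaves implicit.
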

Recall that the \emph{$(p,q)$-Hodge number} $h^{p,q}(M)$ of a compact complex manifold $M$ is the complex dimension of the $(p,q)$-Dolbeault cohomology group.
Corollary \ref{app1} implies the equalities for $(p,0)$- and $(0,q)$-Hodge numbers of two bimeromorphically equivalent compact complex manifolds:
$$
h^{p,0}(\tilde{X})=h^{p,0}(X),\ h^{0,q}(\tilde{X})=h^{0,q}(X).
$$
Therefore, one obtains a uniform proof of the classical result that the $(p,0)$- and $(0,q)$-Hodge numbers of a compact complex manifold are bimeromorphic invariants.
For the type $(0,q)$, it has been shown that
$$\label{hodge-sym1}
H^{0,q}_{\bar{\partial}}(\tilde{X}) \cong  H^{0,q}_{\bar{\partial}}(X)
$$
by means of Leray spectral sequence associated with the bimeromorphic map between the two complex manifolds and the structure sheaf $\mathcal{O}_X$ (see \cite[Corollary 2.15]{u}, \cite[Proof of Corollary 1.8]{PU14}, or \cite[\S $4$ of Chapter $1$]{popa} for example).
But for the type $(p,0)$, one needs to resort to Hartogs extension theorem, such as \cite[Proposition 1.2]{u0}, or \cite[Proposition $4.1$ of Chapter $1$]{popa}.

One more new corollary of Main Theorem \ref{main-thm} is
\begin{cor}\label{enbb}
Let $\pi:\tilde{X}\dashrightarrow X$ be a bimeromorphic map between two compact complex manifolds with the weak factorization \eqref{wf}.
Then there holds the equality
$$h^{1,1}(\tilde{X})-h^{1,1}(X)=\sharp\text{\{blow-ups in \eqref{wf}\}}-\sharp\text{\{blow-downs in \eqref{wf}\}}.$$
\end{cor}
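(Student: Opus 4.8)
The plan is to reduce the global statement to a single-blow-up computation via Main Theorem \ref{main-thm} and then telescope along the weak factorization \eqref{wf}. First I would specialize the blow-up formula \eqref{buf} to the bidegree $(p,q)=(1,1)$. For a blow-up $Y' = \mathrm{Bl}_Z(Y)$ along a connected smooth center $Z$ of codimension $r\geq 2$, the formula gives
$$
h^{1,1}(Y') = h^{1,1}(Y) + \sum_{i=1}^{r-1} h^{1-i,1-i}(Z).
$$
Every summand with $i\geq 2$ vanishes, since the bidegree $(1-i,1-i)$ then has negative entries; only the $i=1$ term survives, contributing $h^{0,0}(Z)$. Because the centers appearing in \eqref{wf} are irreducible, hence connected, and compact, the space $H^{0,0}_{\bar\partial}(Z)$ consists of the constant functions alone, so $h^{0,0}(Z)=1$. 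Thus a single blow-up raises $h^{1,1}$ by exactly one, independently of the codimension (note that $r\geq 2$ guarantees the $i=1$ term is present in the sum).

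Next I would translate this into increments along the factorization. Writing $X_0=\tilde{X},\ldots,X_l=X$, condition $(iii)$ of Theorem \ref{wft} tells us that for each $i$ either $\pi_i$ is itself a blow-up morphism, in which case $X_{i-1}$ is the blow-up of $X_i$ and $h^{1,1}(X_{i-1})-h^{1,1}(X_i)=+1$ by the previous step, or $\pi_i^{-1}$ is a blow-up morphism, in which case $X_i$ is the blow-up of $X_{i-1}$ and $h^{1,1}(X_{i-1})-h^{1,1}(X_i)=-1$. Calling the former a blow-up and the latter a blow-down in \eqref{wf}, each blow-up contributes $+1$ and each blow-down contributes $-1$ to the difference $h^{1,1}(X_{i-1})-h^{1,1}(X_i)$.

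Finally I would sum these increments. The telescoping sum $\sum_{i=1}^{l}\bigl(h^{1,1}(X_{i-1})-h^{1,1}(X_i)\bigr)$ collapses to $h^{1,1}(X_0)-h^{1,1}(X_l)=h^{1,1}(\tilde{X})-h^{1,1}(X)$, while by the bidegree count above it also equals $\sharp\{\text{blow-ups}\}-\sharp\{\text{blow-downs}\}$, yielding the asserted equality.

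The computation itself is short; the only point demanding care is the bookkeeping of directions, namely keeping straight which of $X_{i-1},X_i$ carries the extra exceptional class so that the signs of the increments match the labelling of the steps as blow-ups versus blow-downs in \eqref{wf}. The one input genuinely used beyond \eqref{buf} is the connectedness of the centers, which guarantees $h^{0,0}(Z)=1$ and hence that each elementary step shifts $h^{1,1}$ by precisely one rather than by the number of connected components of the center.
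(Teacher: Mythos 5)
Your proposal is correct and follows essentially the same route as the paper: specialize the blow-up formula \eqref{buf} to bidegree $(1,1)$, observe that only the $i=1$ summand $H^{0,0}_{\bar\partial}(Z)$ survives and contributes $1$ because the centers in the weak factorization are irreducible (hence connected) and compact, and then telescope the increments $\pm 1$ along \eqref{wf}. Your explicit remarks on the vanishing of the $i\geq 2$ terms and on connectedness of the centers are details the paper leaves implicit, but the argument is the same.
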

So it is obvious that if the equality $h^{1,1}(\tilde X)=h^{1,1}(X)$ for Hodge numbers holds,
then the numbers of blow-ups and blow-downs in the weak factorization \eqref{wf} are equal.

Inspired by Corollary \ref{enbb}, one obtains the bimeromorphic stability for the degeneracy of the Fr\"olicher spectral sequences at $E_1$ on compact complex threefolds and fourfolds.
\begin{thm}\label{bsf} Let $\tilde{X}$ be the blow-up of  $X$ along a smooth center $Z$.
Then the Fr\"olicher spectral sequence of $\tilde X$ degenerates at $E_1$, if and only if so do those of $X$ and $Z$.
In particular, if $\tilde{X}$ and $X$ are two bimeromorphically equivalent compact complex manifolds of dimensions at most four,
then the Fr\"olicher spectral sequence of $\tilde{X}$ degenerates at $E_{1}$ if and only if so does for $X$.
\end{thm}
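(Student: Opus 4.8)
The plan is to reduce $E_1$-degeneration to a single numerical identity and then run the two blow-up formulas in parallel. Recall that for a compact complex manifold $M$ the Fr\"olicher spectral sequence has $E_1^{p,q}=H^{p,q}_{\bar\partial}(M)$ and converges to $H^{p+q}_{dR}(M;\mathbb{C})$, so for every $k$ the Fr\"olicher inequality $\sum_{p+q=k}h^{p,q}(M)\geq b_k(M)$ holds, with equality for all $k$ precisely when the sequence degenerates at $E_1$. The first move is therefore to introduce the nonnegative defect $\delta_k(M):=\sum_{p+q=k}h^{p,q}(M)-b_k(M)\geq 0$, so that $E_1$-degeneration of $M$ is equivalent to $\delta_k(M)=0$ for all $k$.

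Next I would feed the blow-up formulas into this defect. Taking dimensions in the Dolbeault formula \eqref{buf} of Main Theorem \ref{main-thm} gives $h^{p,q}(\tilde X)=h^{p,q}(X)+\sum_{i=1}^{r-1}h^{p-i,q-i}(Z)$; summing over $p+q=k$ and reindexing the inner sum by $(p',q')=(p-i,q-i)$, so that $p'+q'=k-2i$, yields
$$
\sum_{p+q=k}h^{p,q}(\tilde X)=\sum_{p+q=k}h^{p,q}(X)+\sum_{i=1}^{r-1}\Big(\sum_{p'+q'=k-2i}h^{p',q'}(Z)\Big).
$$
On the other hand the de Rham blow-up formula \eqref{1} gives $b_k(\tilde X)=b_k(X)+\sum_{i=1}^{r-1}b_{k-2i}(Z)$. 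Subtracting these identities term by term produces the clean relation
$$
\delta_k(\tilde X)=\delta_k(X)+\sum_{i=1}^{r-1}\delta_{k-2i}(Z).
$$
Since every defect is nonnegative, the right-hand side vanishes for all $k$ if and only if $\delta_k(X)=0$ for all $k$ and $\delta_{k-2i}(Z)=0$ for all $k$ and all $1\leq i\leq r-1$. Because $r\geq 2$ the term $i=1$ is always present, and as $k$ ranges over $\mathbb{Z}$ the index $k-2$ ranges over all integers, so the second condition is just $\delta_m(Z)=0$ for every $m$. This is exactly the first assertion: the Fr\"olicher spectral sequence of $\tilde X$ degenerates at $E_1$ if and only if those of $X$ and $Z$ do.

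For the ``in particular'' statement I would invoke the weak factorization Theorem \ref{wft}: the bimeromorphic map $\tilde X\dashrightarrow X$ factors as a chain \eqref{wf} in which each consecutive pair $X_{i-1},X_i$ is related by a single blow-up, in one direction or the other, along a nonsingular center. All the $X_i$ share the dimension of $X$, which is at most four, so each center, having codimension $\geq 2$, has complex dimension at most two, i.e.\ it is a point, a compact complex curve, or a compact complex surface. For each of these the Fr\"olicher spectral sequence is unconditionally $E_1$-degenerate (trivially in dimensions $0$ and $1$, and by the classical identity $b_1=h^{1,0}+h^{0,1}$ for compact complex surfaces). Applying the first part of the theorem to each step then shows that $X_{i-1}$ degenerates at $E_1$ if and only if $X_i$ does, and propagating this equivalence along the whole chain from $\tilde X=X_0$ to $X=X_l$ gives the conclusion.

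The main obstacle is less the bookkeeping in the reindexing than correctly pinning down why the hypothesis $\dim\leq 4$ is exactly what the argument needs: it is precisely the bound guaranteeing that every blow-up center in \eqref{wf} has dimension at most two, the largest dimension in which $E_1$-degeneration is automatic. In dimension five or higher a center could be a compact complex threefold whose Fr\"olicher spectral sequence fails to degenerate, so the step-by-step propagation would break down; thus I would take care to present the surface degeneration and the codimension count as the crux of the restriction rather than as a routine remark.
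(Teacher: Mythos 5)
Your proposal is correct and follows essentially the same route as the paper: both arguments combine the Dolbeault blow-up formula \eqref{buf} with the de Rham formula \eqref{1} to get the additive identity for the Fr\"olicher defects, use nonnegativity (the Fr\"olicher inequality) to conclude the equivalence for a single blow-up, and then handle the ``in particular'' statement via weak factorization together with the classical $E_1$-degeneration for points, curves and surfaces (the paper cites \cite[Theorem IV.2.8]{BHPV} for the latter). Your closing remark on why $\dim \leq 4$ is exactly the right bound is a nice clarification, but the mathematical content is identical to the paper's proof.
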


Actually, the first part of Theorem \ref{bsf} is also applicable to the $\partial\bar\partial$-lemma.
Moreover, inspired by the blow-up formula of various cyclic homologies (\cite[Remark 2.11]{CHSW08}),
we obtain a blow-up formula of Hochschild homologies for compact complex manifolds by Theorem \ref{main-thm}.
Recall that the \emph{Hochschild homology} of a compact complex manifold $X$ is given by
$$
\mathrm{HH}_{k}(X):=\mathrm{Hom}_{X\times X}(\Delta^{!}\mathcal{O}_{X}[k], \mathcal{O}_{\Delta}),
$$
where $\mathcal{O}_{\Delta}$ is the structure sheaf
of the diagonal embedding $\Delta: X \to X\times X$,
and $\Delta^{!}$ is the left adjoint of the pullback functor $\Delta^{\ast}$ (cf. \cite{Cal05}).

\begin{cor}\label{hochschild}
Let $X$ be a compact complex manifold with $\emph{dim}_{\mathbb{C}}\,X=n$ and $Z\subseteq X$ a closed complex submanifold of complex codimension $r\geq2$.
Suppose that $\pi:\tilde{X}\rightarrow X$ is the blow-up of  $X$ along $Z$.
Then there is an isomorphism of Hochschild homologies
$$
\mathrm{HH}_{k}(\tilde{X}) \cong  \mathrm{HH}_{k}(X) \oplus \Big( \mathrm{HH}_{k}(Z) \Big)^{\oplus (r-1)}
$$
for any $-n\leq k\leq n$.
\end{cor}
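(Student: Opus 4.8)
The plan is to deduce the formula directly from the blow-up isomorphism \eqref{buf} of Main Theorem \ref{main-thm} by way of the Hochschild--Kostant--Rosenberg (HKR) decomposition. For any compact complex manifold $M$ of dimension $n$, the HKR theorem in the complex-analytic setting (compatible with the definition of $\mathrm{HH}_{k}$ recalled above, cf. \cite{Cal05}) identifies Hochschild homology with Hodge cohomology via
$$
\mathrm{HH}_{k}(M) \cong \bigoplus_{q-p=k} H^{p,q}_{\bar{\partial}}(M),
$$
where the constraint $0 \leq p,q \leq n$ forces $-n \leq k \leq n$, matching the stated range. First I would record this isomorphism for each of the three manifolds $\tilde{X}$, $X$ and $Z$, thereby reducing the corollary to a purely combinatorial comparison of Dolbeault groups.

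Next I would substitute \eqref{buf} into the HKR decomposition of $\tilde{X}$. Summing over $q - p = k$ and expanding the direct sum gives
$$
\mathrm{HH}_{k}(\tilde{X}) \cong \bigoplus_{q-p=k} H^{p,q}_{\bar{\partial}}(X) \oplus \bigoplus_{i=1}^{r-1} \bigoplus_{q-p=k} H^{p-i,q-i}_{\bar{\partial}}(Z).
$$
The first term is exactly $\mathrm{HH}_{k}(X)$. For the second, the key observation is that the diagonal shift $(p,q) \mapsto (p-i,q-i)$ appearing in \eqref{buf} leaves the difference $q-p = k$ invariant; hence for each fixed $i$ the inner sum runs over precisely the index set computing $\mathrm{HH}_{k}(Z)$. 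Therefore each of the $r-1$ terms contributes a copy of $\mathrm{HH}_{k}(Z)$, and assembling the two pieces yields the claimed isomorphism $\mathrm{HH}_{k}(\tilde{X}) \cong \mathrm{HH}_{k}(X) \oplus \mathrm{HH}_{k}(Z)^{\oplus(r-1)}$.

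The only subtle point is the input HKR isomorphism itself: one must ensure it is available for compact complex manifolds rather than only smooth projective varieties, and that the homological grading in the definition $\mathrm{HH}_{k}(X) = \mathrm{Hom}_{X\times X}(\Delta^{!}\mathcal{O}_{X}[k], \mathcal{O}_{\Delta})$ aligns with the $q-p=k$ convention above. I would settle this by invoking C\u{a}ld\u{a}raru's computation of the Hochschild structure of a complex manifold \cite{Cal05}, after which the remainder is formal; indeed, because the shift in \eqref{buf} is diagonal, the bookkeeping respects every diagonal level $k = q-p$ automatically, so no genuine analytic difficulty arises beyond the blow-up formula already proved.
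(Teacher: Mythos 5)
Your proposal is correct and takes essentially the same route as the paper's proof: apply C\u{a}ld\u{a}raru's HKR isomorphism to $\tilde{X}$, $X$ and $Z$, substitute the blow-up formula \eqref{buf}, and use that the diagonal shift $(p,q)\mapsto(p-i,q-i)$ preserves the anti-diagonal grading, so each $i$ contributes exactly one copy of $\mathrm{HH}_{k}(Z)$. The only discrepancy is that your grading convention $q-p=k$ is opposite to the paper's $p-q=k$ (the latter matching \cite{Cal05}); since the statement is quantified over all $-n\leq k\leq n$, this amounts to the harmless relabeling $k\mapsto -k$, which you in any case flag as a point to be checked.
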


Finally, we will list several examples with equal $(1,1)$-Hodge number or equivalently second Betti number for Corollary \ref{enbb} in Theorem \ref{exthm}, such as two bimeromorphic minimal models in birational geometry.
In particular, using the recent works of Graf \cite{g} and Lin \cite{lin1,lin2} on algebraic approximation of K\"ahler threefolds,
one obtains a bimeromorphic invariance result of Hodge numbers.
\begin{prop}[=Proposition \ref{lin3}]\label{0lin3}
Let ${X}$ and $\tilde X$ be two bimeromorphic K\"ahler minimal models of dimension three with nonnegative Kodaira dimension except two.
Then they have the same Hodge numbers.
\end{prop}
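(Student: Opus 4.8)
The plan is to reduce the bimeromorphic Kähler situation to the projective one via algebraic approximation, and to transport the invariance back through the fact that numerical invariants are constant along families of compact Kähler manifolds. First I would record which Hodge numbers actually need to be controlled. By Corollary \ref{app1} the numbers $h^{p,0}$ and $h^{0,q}$ of $X$ and $\tilde X$ already agree, so by Serre duality $h^{p,q}=h^{3-p,3-q}$ together with the Kähler Hodge symmetry $h^{p,q}=h^{q,p}$, the only Hodge numbers of a threefold left to match are $h^{1,1}$ and $h^{2,1}$ (and their duals). Since $h^{1,0},h^{2,0},h^{3,0}$ are already invariant, it is equivalent to prove that the second Betti number and the topological Euler characteristic coincide, via the two linear relations $b_2=2h^{2,0}+h^{1,1}$ and $e=2-4h^{1,0}+4h^{2,0}-2h^{3,0}+2h^{1,1}-2h^{2,1}$: knowing $b_2$ pins down $h^{1,1}$, and then knowing $e$ pins down $h^{2,1}$.

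Next I would invoke the algebraic approximation results of Graf \cite{g} and Lin \cite{lin1,lin2}: a compact Kähler minimal threefold of Kodaira dimension $\kappa\in\{0,1,3\}$ deforms, inside a Kähler family, to a projective minimal model, and — this is the decisive point — the approximations of the two bimeromorphic models $X$ and $\tilde X$ can be chosen to be \emph{birational} to each other. Along any such Kähler family the Betti numbers are constant by Ehresmann's theorem and the Hodge numbers are constant by the Kodaira–Spencer deformation invariance of the Hodge decomposition; hence $X$ and its projective approximation $X'$ share all $b_k$ and all $h^{p,q}$, and likewise for $\tilde X$ and $\tilde X'$. This reduces the problem to the projective birational pair $X',\tilde X'$.

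Then I would appeal to the projective theory. Two birational projective minimal models are $K$-equivalent (pull both back to a common resolution; the two canonical pullbacks agree because each is nef and the models are isomorphic in codimension one), hence connected by a chain of flops, and $K$-equivalent smooth projective varieties have equal Betti numbers, and in fact equal Hodge–Deligne $E$-polynomials, by the $p$-adic and motivic integration arguments of Batyrev and of Kontsevich (Denef–Loeser). Applied to $X'$ and $\tilde X'$ this gives $b_k(X')=b_k(\tilde X')$ and $h^{p,q}(X')=h^{p,q}(\tilde X')$; transporting back through the two families yields $h^{p,q}(X)=h^{p,q}(\tilde X)$ for all $p,q$. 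If one prefers to stay within the toolkit of this paper, the same conclusion follows by combining the resulting equalities $b_2(X)=b_2(\tilde X)$ and $e(X)=e(\tilde X)$ with Corollary \ref{enbb} — so that $h^{1,1}$ matches and the numbers of blow-ups and blow-downs in the weak factorization \eqref{wf} coincide — and then solving the two displayed relations for $h^{1,1}$ and $h^{2,1}$.

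The step I expect to be the main obstacle is the birational compatibility in the second paragraph: deformation does not a priori preserve birational equivalence, so the genuine work is to produce algebraic approximations of $X$ and $\tilde X$ that remain birational to one another, equivalently to approximate the connecting chain of flops simultaneously. This is precisely what the cited theorems of Graf and Lin supply, and it is also the reason the Kodaira dimension $2$ case must be excluded, since algebraic approximation for Kähler threefolds is not available there.
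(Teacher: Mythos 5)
Your overall strategy coincides with the paper's: algebraic approximation to reduce to the projective case, deformation invariance of Hodge numbers along Kähler families, and the Batyrev--Kontsevich--Wang--Ito theorem for birational projective minimal models; your preliminary reduction to $h^{1,1}$ and $h^{2,1}$ via Corollary \ref{app1}, Serre duality and Hodge symmetry is correct but inessential, since the projective theorem already matches all Hodge numbers. However, the step you yourself flag as the main obstacle is a genuine gap as you have left it: the birationally compatible approximation is \emph{not} what the cited approximation theorems of Graf \cite{g} and Lin \cite{lin1,lin2} supply. Those theorems deform a \emph{single} compact Kähler minimal threefold of Kodaira dimension $0$ or $1$ to a projective one; they say nothing about deforming the two members of a bimeromorphic pair so that the bimeromorphic map survives along the family, and deferring precisely this point back to those theorems leaves the crucial claim unproved.

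The paper closes this gap with two further ingredients. First, by Kollár's theorem \cite[Theorem 4.9]{ko}, the bimeromorphic map $X\dashrightarrow\tilde X$ between such Kähler minimal models is a composition of flops. Second — the decisive input — by Kollár--Mori \cite[Theorem 12.6.2, Remark 12.6.3]{lm}, flops deform in families: after shrinking the base $\Delta$ of the approximating family $\mathfrak{X}\to\Delta$ of $X$, there exist a deformation $\tilde{\mathfrak{X}}\to\Delta$ of $\tilde X$ and a bimeromorphic map $\phi:\mathfrak{X}\dashrightarrow\tilde{\mathfrak{X}}$ over $\Delta$ extending $X\dashrightarrow\tilde X$. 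The fiber $\tilde Y=\phi(Y)$ over the point where the fiber $Y$ of $\mathfrak{X}$ is projective is then connected to $Y$ by flops; moreover its projectivity must itself be \emph{deduced}, not assumed: $\tilde Y$ is Kähler by Kodaira--Spencer local stability \cite{KS} (being a small deformation of the Kähler manifold $\tilde X$) and Moishezon (being bimeromorphic to the projective $Y$), hence projective. With these additions — and the observation that the general type case is trivial because a Kähler Moishezon manifold is already projective — your argument closes up and becomes the paper's proof; your fallback via $b_2$, the Euler characteristic and Corollary \ref{enbb} is then a correct variant of the paper's own alternative ending using Betti numbers together with Corollary \ref{app1}.
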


According to the blow-up formulae of de Rham cohomology \eqref{1} and Dolbeault cohomology for general complex manifolds,
it is reasonable to propose the following conjecture for that of Bott-Chern cohomology for general complex manifolds.
\begin{conjecture}\label{conjbc}
Let $X$ be a compact complex manifold with $\emph{dim}_{\mathbb{C}}\,X=n$ and $Z\subseteq X$ a closed complex submanifold of complex codimension $r\geq2$.
Suppose that $\pi:\tilde{X}\rightarrow X$ is the blow-up of  $X$ along $Z$.
Then there is a canonical isomorphism
$$
H^{p,q}_{BC}(\tilde{X}) \cong  H^{p,q}_{BC}(X)\oplus \Big(\bigoplus_{i=1}^{r-1} H^{p-i,q-i}_{BC}(Z) \Big),
$$
for any $0\leq p,q\leq n$.
\end{conjecture}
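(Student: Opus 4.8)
The plan is to transplant, to Bott--Chern cohomology, the three ingredients driving the proof of Main Theorem \ref{main-thm}: a relative theory carrying a long exact sequence, an excision isomorphism matching the relative groups upstairs and downstairs, and a projective-bundle computation along the exceptional divisor. Write $i:Z\hookrightarrow X$ and $j:E\hookrightarrow\tilde X$ for the inclusions and $\pi_E:=\pi|_{E}:E\to Z$, so that $E=\mathbb{P}(N_{Z/X})$ is a $\mathbb{P}^{r-1}$-bundle over $Z$ and $\pi$ restricts to a biholomorphism $\tilde X\setminus E\to X\setminus Z$.

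First I would introduce a relative Bott--Chern cohomology $H^{p,q}_{BC}(X,Z)$. Since $H^{p,q}_{BC}$ is not the cohomology of a single differential complex but rather $\ker\partial\cap\ker\bar\partial$ modulo $\mathrm{im}\,\partial\bar\partial$ on $(p,q)$-forms, the natural device is to realize $H^{p,q}_{BC}(X)$ as the hypercohomology of Schweitzer's complex of sheaves and to define the relative group as the hypercohomology of the mapping cone of the restriction morphism from the complex on $X$ to that on $Z$, exactly as the authors treat $\bar\partial$ in Subsection \ref{relative}. This produces the long exact sequence
$$\cdots\to H^{p,q}_{BC}(X,Z)\to H^{p,q}_{BC}(X)\xrightarrow{\,i^{*}\,}H^{p,q}_{BC}(Z)\to\cdots,$$
together with its analogue for the pair $(\tilde X,E)$. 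Then I would establish the excision isomorphism $H^{p,q}_{BC}(X,Z)\cong H^{p,q}_{BC}(\tilde X,E)$, the Bott--Chern counterpart of Proposition \ref{tech-prop}: as the relative groups are concentrated near $Z$ and $E$, on which $\pi$ is a biholomorphism away from them, $\pi^{*}$ should induce this isomorphism once compatibility with the chosen resolutions is checked, transporting the argument of Proposition \ref{tech-prop} with the Bott--Chern resolution in place of the Dolbeault one.

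Next I would prove a Leray--Hirsch formula $H^{p,q}_{BC}(E)\cong\bigoplus_{i=0}^{r-1}H^{p-i,q-i}_{BC}(Z)$, with $H^{\bullet,\bullet}_{BC}(E)$ generated over $\pi_E^{*}H^{\bullet,\bullet}_{BC}(Z)$ by the powers $1,h,\dots,h^{r-1}$ of the Bott--Chern first Chern class $h=c_1^{BC}(\mathcal{O}_E(1))$; this requires a Bott--Chern version of Cordero's Hirsch Lemma \cite[Lemma 18]{CFGU00}, built on the fact that the Bott--Chern cohomology of the fiber $\mathbb{P}^{r-1}$ is freely generated by $h$. Running the two long exact sequences in a commuting ladder with vertical maps $\pi^{*}$ and $\pi_E^{*}$ and the identification of the relative terms then finishes the proof: since $\pi_E^{*}$ is the split injection onto the $i=0$ summand and $\pi^{*}$ is injective via the projection formula $\pi_{*}\pi^{*}=\mathrm{id}$, a diagram chase isolates $H^{p,q}_{BC}(\tilde X)$ as the direct sum of $\pi^{*}H^{p,q}_{BC}(X)$ and the complementary summands $\bigoplus_{i=1}^{r-1}h^{i}\cup\pi_E^{*}H^{p-i,q-i}_{BC}(Z)$.

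The hard part, and the reason the statement is left as a conjecture, lies entirely in the special nature of Bott--Chern cohomology: unlike $\bar\partial$, it is not computed by one elliptic complex, so both the relative theory with its long exact sequence and the Hirsch Lemma must control $\partial$, $\bar\partial$ and $\partial\bar\partial$ simultaneously. Concretely, the splitting maps supplied by Cordero's lemma must be upgraded so as to be compatible with $\partial$ and $\bar\partial$ at once, equivalently to respect the $\partial\bar\partial$-relations defining the Bott--Chern classes; the careful bookkeeping of the $\partial$- and $\bar\partial$-primitives of the powers $h^{i}$ is what makes the Bott--Chern case genuinely more delicate than the Dolbeault one.
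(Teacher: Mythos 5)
You should first note what the paper actually does with this statement: it is deliberately left as a \emph{conjecture}. The paper offers no proof; it only remarks that the formula holds when $\tilde X$ is a $\partial\bar\partial$-manifold (for then $Z$ is one too, by Theorem \ref{main-thm} and \eqref{1}, and Bott--Chern cohomology is then computable from the Hodge structure). So there is no argument of the authors to compare yours against, and the right question is whether your outline closes the gaps that make this a conjecture. It does not: it is a program whose three pillars are exactly the unproven steps, and each has a concrete obstruction beyond the one you flag. First, the long exact sequence you write does not exist as stated. The hypercohomology of the cone of Schweitzer complexes does produce a long exact sequence, but Schweitzer's complex computes $H^{p,q}_{BC}$ only in a single total degree; in the neighboring degrees its hypercohomology gives Aeppli-type groups of other bidegrees. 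The connecting sequence therefore mixes Bott--Chern and Aeppli cohomologies across bidegrees, and is not the clean ladder $\cdots\to H^{p,q}_{BC}(X,Z)\to H^{p,q}_{BC}(X)\to H^{p,q}_{BC}(Z)\to\cdots$ with fixed $p$ that your diagram chase requires.

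Second, and most seriously, your justification of the excision isomorphism $H^{p,q}_{BC}(X,Z)\cong H^{p,q}_{BC}(\tilde X,E)$ --- that ``the relative groups are concentrated near $Z$ and $E$'' where $\pi$ is close to an isomorphism --- is precisely the argument that is \emph{false} for Dolbeault-type theories. It works for de Rham (see the Appendix, where $H^{\bullet}_{dR}(X,Z)\cong H^{\bullet}_{dR,c}(X-Z)$ via tubular neighborhoods), but Dolbeault and Bott--Chern cohomology have no such locality/excision property; this is exactly why the paper's Proposition \ref{tech-prop} could not be proved softly and instead required Stelzig's theorem on the double complex of a blow-up \cite[Theorem 8]{Jo18} (or, alternatively, Meng's computation of the direct image sheaves sketched in Remark \ref{meng-rk}). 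No Bott--Chern analogue of that input is available in the paper, and supplying one is the real content of the conjecture, not a ``compatibility check.'' Third, smaller but still genuine gaps: Cordero's Hirsch Lemma \cite[Lemma 18]{CFGU00} is a statement about the $\bar\partial$-complex only, so the Bott--Chern Leray--Hirsch formula for $E=\mathbb{P}(\mathcal{N}_{Z/X})$ must be proved from scratch (you acknowledge this); and the injectivity of $\pi^{*}$ on Bott--Chern classes cannot be quoted from \cite[Theorem 3.1]{Wells74}, which covers de Rham and Dolbeault --- one must either construct a pushforward $\pi_{*}$ compatible with both $\partial$ and $\bar\partial$ and verify $\pi_{*}\pi^{*}=\mathrm{id}$ on $H^{p,q}_{BC}$, or argue differently. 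In short, your proposal is a sensible research plan that mirrors the paper's Dolbeault strategy, but every step that distinguishes Bott--Chern from Dolbeault is left open, so it is not a proof of the statement.
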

Here the \emph{$(p,q)$-Bott-Chern cohomology group} of a complex manifold $M$ is defined by
$$H^{p,q}_{BC}(M):=\frac{\ker \partial\cap \ker\bar{\partial}}{\im\ \partial\bar{\partial}}.$$
 It is worth noticing that if $\tilde{X}$ satisfies the $\partial\bar\partial$-lemma, so does $Z$ by Theorem \ref{main-thm} and \eqref{1}, and thus Conjecture \ref{conjbc} holds then.

This paper is organized as follows.
We devote Section \ref{pre} to review the blow-up formula of de Rham cohomologies on complex manifolds and introduce relative Dolbeault cohomology.
In Section \ref{pmt}, we give the proof of Main Theorem \ref{main-thm}.
The proofs of Corollaries \ref{app1}-\ref{hochschild} of Main Theorem \ref{main-thm} are given in Section \ref{amt}.
In Section \ref{ex}, we list several examples specially for Corollary \ref{enbb}. The final appendix \ref{appendix} is to give a new proof for blow-up formula of de Rham cohomologies on complex manifolds by relative de Rham cohomologies.

Shortly after we posted our first version \cite{ryy}$_{v1}$ on arXiv, we were informed that D. Angella, T. Suwa, N. Tardini and A. Tomassini also obtained a similar result \cite[Theorem 2.1]{astt} to Theorem \ref{bsf} with the center admitting a holomorphically contractible neighbourhood by \v{C}ech cohomology theory, and additionally considered the orbifold case for new
\cite[Examples 3.2 and 3.3]{astt} satisfying the $\partial\bar\partial$-lemma. We also notice the more recent works \cite{meng1,meng2} of L. Meng, which present explicit expression for the isomorphism in the blow-up formula \eqref{buf}, and J. Stelzig's important work \cite{Jo18} which proves a similar result to \eqref{buf} by computing double complexes of blowing up complex manifolds up to a suitable quasi-isomorphism and provides us a critical equivalent isomorphism to that of Proposition \ref{tech-prop}. More recently, in his updating of \cite{meng1}, Meng proves the vanishing of the direct image sheaves relating to the relative Dolbeault sheaves and thus one is still able to obtain Proposition \ref{tech-prop} by our previous approach in \cite{ryy}$_{v3}$, which is sketched in Remark \ref{meng-rk}.

\section{Preliminaries}\label{pre}

In this section we will recall a blow-up formula for de Rham cohomology and  introduce relative Dolbeault cohomology, which plays an important role in the proof of Main Theorem \ref{main-thm}.
\subsection{Blow-up formula for de Rham cohomology}

Assume that $X$ is a compact complex manifold in the \emph{Fujiki Class ($\mathscr{C}$)}, i.e., bimeromorphic to a K\"ahler manifold.
Let $\textmd{dim}_{\mathbb{C}}X=n$ and let $Z\subseteq X$ be a closed complex submanifold with $\textmd{codim}_{\mathbb{C}}Z=r\geq 2$.
Then $Z$ is also in the Fujiki Class ($\mathscr{C}$) (cf. \cite[Lemma 4.6]{Fu78}).
Let $\pi:\tilde{X}\rightarrow X$ be the blow-up of $X$ with the center $Z$ and the exceptional divisor $E=\pi^{-1}(Z)$.
By definition, we get that $\tilde{X}$ is in the Fujiki Class ($\mathscr{C}$).
Observe that $E$ is a hypersurface in $\tilde{X}$ the inclusion $\jmath:E\hookrightarrow\tilde{X}$ induces a map called {Gysin morphism}
$$
\jmath_{*}:H^{\bullet}_{dR}(E;\mathbb{C})\rightarrow H^{\bullet+2}_{dR}(\tilde{X};\mathbb{C}).
$$
In particular, we have the following canonical isomorphism which gives rise to a blow-up formula for de Rham cohomology
\footnote{In the proof of \cite[Theorem 7.31]{V} the manifold $X$ is a K\"{a}hler manifold for the studying of Hodge structure of a blow-up;
in fact, the argument given in the proof of \cite[Theorem 7.31]{V} can be applied to any compact complex manifold without any essential changes.}
(cf. \cite[Theorem 7.31]{V}):
\begin{equation}\label{1}
\xymatrix@C=0.5cm{
 & H^{k}_{dR}(X;\mathbb{C})\oplus\biggl(\bigoplus^{r-2}_{i=0}H^{k-2i-2}_{dR}(Z;\mathbb{C})\biggr)
 \ar[r]^{\quad\quad\qquad\qquad\phi} & H^{k}_{dR}(\tilde{X};\mathbb{C})& }
\end{equation}
where $\phi=\pi^{*}+\sum^{r-2}_{i=0}\jmath_{*}\circ \bm h^{i}\circ (\pi_{E})^{*}$, $\bm h=c_{1}(\mathcal{O}_{E}(1))\in H^{2}_{dR}(E;\mathbb{R})$
and $\bm h^{i}$ is given by the cup-product by $\bm h^{i}\in H^{2i}_{dR}(E;\mathbb{R})$.
We will present a new proof of this formula by relative de Rham cohomologies in Appendix \ref{appendix}.
By definition, $E$ is biholomorphic to $\mathbb{P}(\mathcal{N}_{Z/X})$,
the projective bundle associated to the normal bundle $\mathcal{N}_{Z/X}$,
and $\mathcal{O}_{E}(1)$ is the associated tautological line bundle.
Recall that the \emph{normal bundle} $\mathcal{N}_{Z/X}:=T_{X|Z}/T_Z$ of $Z$ in $X$ is a holomorphic vector bundle of rank $r$.
Moreover, the restriction of $\bm h$ to each fiber $\mathbb{CP}^{r-1}$ of $\mathbb{P}(\mathcal{N}_{Z/X})$ is the generator of the cohomology ring $H^{\bullet}_{dR}(\mathbb{CP}^{r-1},\mathbb{R})$, which means
$$
\bm h|_{\mathbb{CP}^{r-1}}\in H^{1,1}_{\bar{\partial}}(\mathbb{CP}^{r-1}).
$$
This implies that $\bm h\in H^{2}_{dR}(E;\mathbb{R})\cap H^{1,1}_{\bar{\partial}}(E)$ and hence
$\bm h^{i}\in H^{2i}_{dR}(E;\mathbb{R})\cap H^{i,i}_{\bar{\partial}}(E)$.
Since every compact complex manifold in the Fujiki class ($\mathscr{C}$) admits a strong Hodge decomposition (cf. \cite[Theorem 12.9]{Dem12})
we have the canonical decompositions
\begin{eqnarray}
H^{k}_{dR}(X;\mathbb{C})&\cong& \bigoplus_{p+q=k}H^{p,q}_{\bar{\partial}}(X),\,\,\,0\leq k\leq 2n;
\label{2} \\
H^{k}_{dR}(\tilde{X};\mathbb{C})&\cong& \bigoplus_{p+q=k}H^{p,q}_{\bar{\partial}}(\tilde{X}),\,\,\,0\leq k\leq 2n;\label{3}\\
H^{l}_{dR}(Z;\mathbb{C}) &\cong& \bigoplus_{s+t=l}H^{s,t}_{\bar{\partial}}(Z),\,\,\,0\leq l\leq 2(n-r).\label{4}
\end{eqnarray}

According to the isomorphism (\ref{1}) there exist unique classes
$[\beta]\in H^{k}_{dR}(X;\mathbb{C})$ and $[\gamma]\in H^{k-2i-2}_{dR}(Z;\mathbb{C})$ for each class $[\alpha]\in H^{k}_{dR}(\tilde{X};\mathbb{C})$ such that
$$\label{5}
[\alpha]=\pi^{*}[\beta]+\sum^{r-2}_{i=0}\jmath_{*}(\bm h^{i}\wedge(\pi_{E})^{*}[\gamma]).
$$
From the Hodge decomposition (\ref{2}) it follows
\begin{equation}\label{6}
[\alpha]=\sum_{p+q=k}[\alpha]_{(p,q)},
\end{equation}
where $[\alpha]_{(p,q)}\in H^{p,q}_{\bar{\partial}}(\tilde{X})$.
Likewise, from (\ref{3}) and (\ref{4}) one obtains
\begin{equation}\label{7}
[\beta]=\sum_{p+q=k}[\beta]_{(p,q)},
\end{equation}
and
\begin{equation}\label{8}
[\gamma]=\sum_{s+t=k-2i-2}[\gamma]_{(s,t)}.
\end{equation}
From (\ref{6})-(\ref{8}) and via a degree checking we get
$$
[\alpha]_{(p,q)}=\pi^{*}[\beta]_{(p,q)}+\sum^{r-2}_{i=0}\jmath_{*}(\bm h^{i}\wedge(\pi_{E})^{*}[\gamma]_{(p-i-1,q-i-1)}).
$$
This implies a blow-up formula for Dolbeault cohomology on complex manifolds with canonical decompositions
$$
% \nonumber to remove numbering (before each equation)
  H^{p,q}_{\overline{\partial}}(\tilde{X})
  \cong H^{p,q}_{\bar{\partial}}(X)\oplus\biggl(\bigoplus^{r-2}_{i=0}H^{p-i-1,q-i-1}_{\bar{\partial}}(Z)\biggr)
  = H^{p,q}_{\bar{\partial}}(X)\oplus\biggl(\bigoplus^{r-1}_{i=1}H^{p-i,q-i}_{\bar{\partial}}(Z)\biggr),
$$
where $0\leq p,q\leq n$.

The goal of this paper is to present a blow-up formula for Dolbeault cohomologies of a general compact complex manifold.
It is worth noticing that not all complex manifolds satisfy the $\partial\bar\partial$-lemma or rather Hodge decomposition, such as Iwasawa manifolds.
%===================================================================
%
%In general, there exist two methods to prove the blow-up formula for de Rham cohomology.
%One is based on the long exact sequence related to the relative de Rham cohomology, excision lemma and Thom isomorphism,
%such as for \cite[Theorem 7.31]{V}, while  the other one applies the exact sequence of sheaf cohomologies associated to a closed submanifold.
%It is noteworthy that the first method for Dolbeault cohomology was used in the recent work \cite{astt}.

\subsection{The exact sequence associated to a closed submanifold}\label{relative}
In this subsection we introduce the definition of relative Dolbeault cohomology inspired by the relative de Rham cohomology in the sense of Godbillon \cite[Chapitre XII]{Go97}.
Another version of relative Dolbeault cohomology was defined by Suwa \cite{Su17}.

%We always consider the sheaf of abelian groups.
%\begin{defn}
%Let $X$ be a compact complex manifold.
%\begin{enumerate}[$(i)$]
%\item Suppose that $\mathcal{F}$ is a sheaf on $X$.
%         Then the {\it sheaf cohomology} of $\mathcal{F}$ is given by
%         $$
%         H^{q}(X, \mathcal{F}):=R^{q}\Gamma(X, \mathcal{F}),
%         $$
%        that is the $q$-th right derived functor of global section functor $\Gamma$ applied to $\mathcal{F}$.
%
%\item Suppose that $\mathcal{E}^{\bullet}$ is a bounded blow complex of sheaves on $X$.
%         Then the {\it hypercohomology} of $\mathcal{E}^{\bullet}$ is given by
%         $$
%         \mathbb{H}^{q}(X, \mathcal{E}^{\bullet}):=R^{q}\Gamma(X, \mathcal{E}^{\bullet}),
%         $$
%         that is the $q$-th right derived functor of $\Gamma$ applied to $\mathcal{E}^{\bullet}$.
%\end{enumerate}
%\end{defn}

Let $X$ be a compact complex manifold of complex dimension $n$.
For $0\leq p\leq n$, there is a complex of complex-valued differential forms
$$
0 \rightarrow \mathcal{A}^{p,0}(X) \xrightarrow{\bar{\partial}}  \mathcal{A}^{p,1}(X)\xrightarrow{\bar{\partial}} \cdots \xrightarrow{\bar{\partial}} \mathcal{A}^{p,n}(X)\xrightarrow{\bar{\partial}}  0,
$$
where $\mathcal{A}^{p,q}(X)$ is the space of complex-valued differential forms of $(p,q)$-type on $X$.
Then \emph{the $(p,q)$-th Dolbeault cohomology} of $X$ is defined to be
\begin{equation*}
H_{\bar{\partial}}^{p,q}(X):=\frac{\ker\,(\bar{\partial}:\mathcal{A}^{p,q}(X)\rightarrow \mathcal{A}^{p,q+1} (X))}{\mathrm{Im}\,(\bar{\partial}:\mathcal{A}^{p,q-1}(X) \rightarrow \mathcal{A}^{p,q} (X))}.
\end{equation*}

%From the sheaf theoretic point of view, we denote the sheaf of holomorphic $p$-forms on $X$ by $\Omega_{X}^{p}$
%and let $\mathscr{A}_{X}^{p,q}$ be the sheaf of differential $(p,q)$-forms.
%Then there is the so-called {\it Dolbeault resolution} of the sheaf $\Omega_{X}^{p}$ as follows
%$$
%\xymatrix@C=0.5cm{
%0 \ar[r]^{} & \Omega_{X}^{p}   \ar[r]^{} & \mathscr{A}_{X}^{p,0}  \ar[r]^{\bar{\partial}} & \mathscr{A}_{X}^{p,1}  \ar[r]^{\bar{\partial}} &\cdots.}
%$$
%In particular, the map $\Omega_{X}^{p} \hookrightarrow  \mathscr{A}_{X}^{p,\bullet}$ induced by the inclusion is a quasi-isomorphism and by \cite[Corollary 8.9]{V} one has
% the isomorphism
%$
%R^{q}\Gamma(X, \Omega_{X}^{p})
%\cong R^{q}\Gamma(X, \mathscr{A}_{X}^{p,\bullet}).
%$
%Therefore,  we see
%\begin{equation*}
%H_{\bar{\partial}}^{p,q}(X) \cong H^{q}(X, \Omega_{X}^{p})\cong \mathbb{H}^{q}(X, \mathscr{A}_{X}^{p,\bullet}).
%\end{equation*}

%===================================================================

Assume that $M$ is a compact complex manifold with complex dimension $n$ and let $N$ be a closed complex submanifold of $M$.
For any $p\geq0$, consider the space of differential forms
$$
\mathcal{A}^{p,\bullet}(M,N)
=\{\alpha\in\mathcal{A}^{p,\bullet}(M)\,|\,i^{\ast}\alpha=0\},
$$
where $i^{\ast}$ is the pullback of the holomorphic inclusion $i:N\hookrightarrow M$.
Since $\mathcal{A}^{p,\bullet}(M,N)$ is closed under the action of the differential operator $\bar{\partial}$ we get a sub-complex of the Dolbeault complex $\{\mathcal{A}^{p,\bullet}(M),\bar{\partial}\}$, called the \emph{relative Dolbeault complex}, with respect to $N$:
$$
\xymatrix{
0 \ar[r]^{} & \mathcal{A}^{p,0}(M,N) \ar[r]^{\bar{\partial}} & \mathcal{A}^{p,1}(M,N)  \ar[r]^{\bar{\partial}} & \mathcal{A}^{p,2}(M,N) \ar[r]^{\quad\bar{\partial}} & \cdots.}
$$
The associated $q$-th cohomology $H^{p,q}_{\bar{\partial}}(M,N)$ is called the \emph{relative Dolbeault cohomology group} of the pair $(M,N)$.
From definition, it is straightforward to verify that if $p>\mathrm{dim}_{\mathbb{C}}\,N$ or $q>\mathrm{dim}_{\mathbb{C}}\,N$ then $H^{p,q}_{\bar{\partial}}(M,N)=H^{p,q}_{\bar{\partial}}(M)$.

\begin{lem}\label{sur-1}
There exists an open neighborhood $\mathcal{U}$ of $N$ in $M$ such that $i^{\ast}:\mathcal{A}^{p,q}(\mathcal{U})\rightarrow\mathcal{A}^{p,q}(N)$ is surjective,
where $i:N\hookrightarrow \mathcal{U}$ is the inclusion.
\end{lem}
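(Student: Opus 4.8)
The plan is to construct, for an arbitrary $\alpha\in\mathcal{A}^{p,q}(N)$, a smooth $(p,q)$-form on a neighborhood of $N$ whose pullback under $i$ is exactly $\alpha$; surjectivity of $i^{\ast}$ then follows. The essential point, and the only place where the complex structure genuinely enters, is that the extension must be carried out so as to preserve the bidegree $(p,q)$. A generic smooth tubular retraction $\pi:\mathcal{U}\to N$ would furnish an extension $\pi^{\ast}\alpha$ with $i^{\ast}\pi^{\ast}\alpha=\alpha$, but $\pi^{\ast}$ need not respect type, and a \emph{holomorphic} retraction onto $N$ need not exist. I would therefore avoid a single global retraction and instead work in local holomorphic coordinates adapted to $N$, then patch.

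First I would cover $N$ by coordinate charts $U_{\lambda}$ of $M$ carrying holomorphic coordinates $(z_{1}^{\lambda},\dots,z_{n}^{\lambda})$ in which $N\cap U_{\lambda}=\{z_{d+1}^{\lambda}=\cdots=z_{n}^{\lambda}=0\}$, where $d=\dim_{\mathbb{C}}N$. On such a chart any $(p,q)$-form on $N$ reads $\alpha=\sum_{|I|=p,\,|J|=q}f_{IJ}\,dz_{I}^{\lambda}\wedge d\bar{z}_{J}^{\lambda}$ with $I,J\subseteq\{1,\dots,d\}$ and $f_{IJ}$ smooth on $N\cap U_{\lambda}$. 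I extend it by declaring the coefficients $f_{IJ}$ to be independent of the normal coordinates $z_{d+1}^{\lambda},\dots,z_{n}^{\lambda}$; this produces a form $\tilde{\alpha}_{\lambda}\in\mathcal{A}^{p,q}(U_{\lambda})$ which is of pure type $(p,q)$ precisely because only the tangential holomorphic and anti-holomorphic differentials occur, and which satisfies $i^{\ast}\tilde{\alpha}_{\lambda}=\alpha|_{N\cap U_{\lambda}}$ by construction.

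To globalize I would adjoin the open set $U_{0}:=M\setminus N$ (open since $N$ is closed) and set $\tilde{\alpha}_{0}:=0$, so that $\{U_{0}\}\cup\{U_{\lambda}\}$ is an open cover of $M$. Choosing a smooth partition of unity $\{\rho_{0},\rho_{\lambda}\}$ subordinate to this cover, I define $\tilde{\alpha}:=\sum_{\lambda}\rho_{\lambda}\tilde{\alpha}_{\lambda}$, each summand extended by zero using $\operatorname{supp}\rho_{\lambda}\subseteq U_{\lambda}$; by local finiteness this is a globally defined smooth $(p,q)$-form. Since $i^{\ast}$ is an algebra homomorphism and the $\rho_{\lambda}$ are functions, $i^{\ast}\tilde{\alpha}=\sum_{\lambda}(\rho_{\lambda}|_{N})\,(\alpha|_{N\cap U_{\lambda}})$. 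Because $\operatorname{supp}\rho_{0}\subseteq M\setminus N$ forces $\rho_{0}|_{N}=0$, one has $\sum_{\lambda}\rho_{\lambda}|_{N}=1$, whence $i^{\ast}\tilde{\alpha}=\alpha$. Taking $\mathcal{U}$ to be any neighborhood of $N$ contained in $\bigcup_{\lambda}U_{\lambda}$ (or simply $\mathcal{U}=M$) then yields the claimed surjectivity.

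The main obstacle is the type-preservation in the extension step: the construction succeeds only because the adapted coordinates are holomorphic, so that holding the coefficients constant along the normal directions leaves all surviving differentials tangential and of the correct bidegree $(p,q)$. Everything else—local finiteness of the cover, smoothness of the partitioned sum, and the fact that $i^{\ast}$ commutes with wedge products and with multiplication by functions—is routine and requires no further input.
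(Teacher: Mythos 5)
Your proof is correct, but it takes a genuinely different route from the paper's. You work locally: adapted holomorphic charts in which $N=\{z_{d+1}=\cdots=z_n=0\}$, extension by pulling back along the local holomorphic projection $(z_1,\dots,z_n)\mapsto(z_1,\dots,z_d)$ (which preserves type because it is holomorphic), and then a partition of unity $\{\rho_0,\rho_\lambda\}$ to patch, with $U_0=M\setminus N$ carrying the zero form. The paper instead works globally: it invokes the smooth Tubular Neighborhood Theorem to get a single smooth retraction $\gamma:\mathcal{U}\to N$, pulls back $\alpha$ by $\gamma$ — which does \emph{not} preserve type, since $\gamma$ is merely smooth — and then extracts the $(p,q)$-component $\beta^{p,q}$ of $\gamma^{\ast}\alpha$; the identity $\gamma\circ i=\mathrm{id}_N$ together with the fact that $i^{\ast}$ preserves pure types (holomorphy of $i$) forces $i^{\ast}\beta^{s,t}=0$ for $(s,t)\neq(p,q)$ and $i^{\ast}\beta^{p,q}=\alpha$. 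So the two arguments resolve the same key difficulty (type preservation) by opposite means: you arrange type preservation at the source by using holomorphic local projections, while the paper tolerates a type-breaking global retraction and repairs it afterwards by projecting onto the $(p,q)$-component. Your version is more elementary (no tubular neighborhood theorem, only adapted charts) and directly produces a global form on $M$, so the paper's subsequent lemma ($i^{\ast}:\mathcal{A}^{p,q}(M)\to\mathcal{A}^{p,q}(N)$ surjective) is immediate rather than requiring a cutoff; the paper's version is shorter and coordinate-free. One small point you should make explicit: for "holding the coefficients constant in the normal directions" to be well defined, the charts $U_\lambda$ must be chosen with product-shaped image (e.g.\ polydiscs), so that the local projection actually maps $U_\lambda$ into $N\cap U_\lambda$; this is routine to arrange.
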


\begin{proof}
By the classical Tubular Neighborhood Theorem \cite[Theorem 6.24]{lee}, we have an open tubular neighborhood $\mathcal{U}$ of $N$ with a {smooth} retraction map $\gamma:\mathcal{U}\rightarrow N$ such that $\gamma|_{N}$ is the identity map of $N$ as in \cite[Proposition 6.25]{lee}.
Now, to $\mathcal{U}$ and $N$ we associate the induced complex structures by the one of $M$, and let $i:Z\hookrightarrow \mathcal{U}$ be the holomorphic embedding since $Z$ is a closed complex submanifold of $M$. Then the chain rule ensures that $\gamma$ is still smooth under these complex structures.

For any $\alpha^{p,q}\in\mathcal{A}^{p,q}(N)$, the pull-back $\beta:=\gamma^{\ast}(\alpha^{p,q})$ by $\gamma$ is a complex-valued {smooth} $(p+q)$-form on $\mathcal{U}$.
By the type decomposition according to the complex structure,
$\beta$ has the unique decomposition
$
\beta=\sum_{s+t=p+q}\beta^{s,t}
$
with $\beta^{s, t}\in\mathcal{A}^{s,t}(\mathcal{U})$.
So,
$$
\alpha^{p, q}=i^{\ast}(\gamma^{\ast}(\alpha^{p,q}))=i^{\ast}(\beta)=\sum_{s+t=p+q}i^{\ast}(\beta^{s,t})=i^{\ast}(\beta^{p,q})
$$
since $\gamma\circ i=\textmd{id}_{N}$ and the pull-back of the holomorphic map $i$ preserves the pure types of complex differential forms. Hence, $\beta^{p,q}$ is the desired one.
\end{proof}

As a direct corollary of Lemma \ref{sur-1}, one has
\begin{lem}
The pullback $i^{\ast}:\mathcal{A}^{p,q}({M})\rightarrow \mathcal{A}^{p,q}({N})$ is surjective.
\end{lem}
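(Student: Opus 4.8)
The plan is to deduce this global surjectivity from the local surjectivity already established in Lemma \ref{sur-1} by means of a standard cutoff argument. First, given any $\alpha\in\mathcal{A}^{p,q}(N)$, Lemma \ref{sur-1} supplies the tubular neighborhood $\mathcal{U}$ of $N$ together with a form $\beta\in\mathcal{A}^{p,q}(\mathcal{U})$ satisfying $i^{\ast}\beta=\alpha$, where here $i:N\hookrightarrow\mathcal{U}$ is the inclusion. The only remaining task is to promote this locally defined extension to a form defined on all of $M$ without disturbing its restriction to $N$.

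To carry this out I would choose a smooth cutoff function $\rho:M\rightarrow[0,1]$ that is supported in $\mathcal{U}$ and identically equal to $1$ on an open neighborhood $V$ of $N$ with $\overline{V}\subseteq\mathcal{U}$; such a $\rho$ exists because $M$ is a compact (hence paracompact) smooth manifold and $N$ is closed, so bump functions separating $N$ from $M\setminus\mathcal{U}$ are available via a partition of unity. Setting $\tilde{\beta}:=\rho\beta$ on $\mathcal{U}$ and $\tilde{\beta}:=0$ on $M\setminus\mathrm{supp}(\rho)$ yields a globally well-defined smooth form on $M$, since the two prescriptions agree on the overlap, where $\rho$ vanishes together with all of its derivatives.

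The one point needing attention is the bidegree of the result: because $\rho$ is a function, i.e. a form of type $(0,0)$, multiplication by $\rho$ preserves the $(p,q)$-type, so $\tilde{\beta}\in\mathcal{A}^{p,q}(M)$ as required. Finally, since $\rho\equiv1$ on $N$, pulling back gives $i^{\ast}\tilde{\beta}=(\rho|_{N})\cdot i^{\ast}\beta=i^{\ast}\beta=\alpha$, so $\alpha$ lies in the image of $i^{\ast}:\mathcal{A}^{p,q}(M)\rightarrow\mathcal{A}^{p,q}(N)$ and surjectivity follows. I do not expect any genuine obstacle here: the argument is essentially routine, and the only substantive content has already been absorbed into Lemma \ref{sur-1}, with the present step merely gluing the local extension to zero away from $N$ and checking that smoothness and type are preserved.
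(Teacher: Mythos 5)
Your proof is correct and is essentially the argument the paper intends: the paper states this lemma as a direct corollary of Lemma \ref{sur-1}, leaving implicit exactly the cutoff-and-extend-by-zero step you spell out. Multiplying the tubular-neighborhood extension by a bump function that is $1$ near $N$ and supported in $\mathcal{U}$ preserves the $(p,q)$-type and the restriction to $N$, so nothing further is needed.
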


Particularly, there holds the so-called \emph{short exact sequence for the pair $(M,N)$} of complexes
$$\label{short-exact-relative}
\xymatrix@C=0.5cm{
0 \ar[r]^{} & \mathcal{A}^{p,\bullet}(M,N) \ar[r]^{} & \mathcal{A}^{p,\bullet}(M)  \ar[r]^{i^{\ast}} & \mathcal{A}^{p,\bullet}(N) \ar[r]^{} & 0},
$$
which gives rise to a long exact sequence
\begin{equation}\label{long-exact-relative}
\xymatrix@C=0.5cm{
\cdots\ar[r]^{} & H^{p,\bullet}_{\bar{\partial}}(M,N) \ar[r]^{} & H^{p,\bullet}_{\bar{\partial}}(M)  \ar[r]^{} & H^{p,\bullet}_{\bar{\partial}}(N) \ar[r]^{} & H^{p,\bullet+1}_{\bar{\partial}}(M,N) \ar[r]^{} & \cdots}.
\end{equation}

%==================================================================
\section{Proof of Main Theorem \ref{main-thm}}\label{pmt}

\subsection{Dolbeault cohomology of projective bundles}
We first recall Hirsch Lemma for Dolbeault cohomology in \cite[Section 4.2]{CFGU00} that will provide a model for the Dolbeault cohomology of the total space in a holomorphic fibration under some suitable hypothesis.

Let $F\hookrightarrow E\xrightarrow{\pi} B$ be a holomorphic fibration,
where $E,B,F$ are compact connected complex manifolds and the structure group of the fibration is connected.
An element $\bm\alpha\in H^{p,q}_{\bar\partial}(F)$ is called \emph{transgressive}
if there is a representative $\alpha\in \mathcal{A}^{p,q}(F)$
which extends to a form $\tilde{\alpha}\in \mathcal{A}^{p,q}(E)$ such that $\bar\partial\tilde{\alpha}=\pi^{\ast}\beta$ for some $\bar\partial$-closed form $\beta\in \mathcal{A}^{p,q+1}(B)$.
If $H^{\bullet,\bullet}_{\bar\partial}(F)$ is free as a bigraded algebra,
we say that it is \emph{transgressive} if it has an algebra basis consisting of transgressive elements.

Assume that the bigraded algebra $H^{\bullet,\bullet}_{\bar\partial}(F)$ for the holomorphic fibration is free and transgressive.
Let $(A^{\bullet,\bullet},\bar\partial)$ be a differential bigraded algebra and
$$
\rho:A^{\bullet,\bullet}\rightarrow \mathcal{A}^{\bullet,\bullet}(B)
$$
a morphism of differential bigraded algebras giving an isomorphism on cohomology;
that is, $(A^{\bullet,\bullet},\bar\partial)$ is a model for $(\mathcal{A}^{\bullet,\bullet}(B),\bar\partial)$.
Pick an algebra basis $\{\bm x_1,\cdots,\bm x_p\}$ for $H^{\bullet,\bullet}_{\bar\partial}(F)$.
Let  $\tilde{\alpha}_i\in \mathcal{A}^{p,q}(E)$ be a form and give rise to a $\bar\partial$-closed form representing $\bm x_i$ when restricted to $F$.
Let $\beta_i$ be such that  $\bar\partial\tilde{\alpha}_i=\pi^{\ast}\beta_i$.
Since $\rho$ is an isomorphism on cohomology,
one may pick $a_i$ such that $\beta_i=\rho(a_i)$ for some $\bar\partial$-closed form $a_i\in A^{\bullet,\bullet}$.
Let $$T=A^{\bullet,\bullet}\otimes H^{\bullet,\bullet}_{\bar\partial}(F)$$ be the tensor product of these bigraded algebras,
and define a differential $\bar\partial$ of type $(0,1)$ for $T$ by setting $$\bar\partial: H^{\bullet,\bullet}_{\bar\partial}(F)\rightarrow A^{\bullet,\bullet+1}$$ as $\bar\partial(\bm x_i)=a_i$.
Then $(T,\bar\partial)$ is a differential bigraded algebra.
Then one has the important Hirsch Lemma.
\begin{lem}[{\cite[Lemma 18]{CFGU00}}]\label{hirsch}
The morphism
$$
\tilde{\rho}:T=A^{\bullet,\bullet}\otimes H^{\bullet,\bullet}_{\bar\partial}(F)\rightarrow \mathcal{A}^{\bullet,\bullet}(E),
$$
defined by $\tilde{\rho}|_A=\pi^{\ast}\circ\rho$ and $\tilde{\rho}(\bm x_i)=\tilde{\alpha}_i$,
induces an isomorphism on cohomology.
Hence, $(A^{\bullet,\bullet}\otimes H^{\bullet,\bullet}_{\bar\partial}(F),\bar\partial)$ is a model for the Dolbeault complex $(\mathcal{A}^{\bullet,\bullet}(E),\bar\partial)$.
\end{lem}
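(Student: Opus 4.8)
The plan is to prove that $\tilde\rho$ is a quasi-isomorphism by a spectral-sequence comparison, exactly in the spirit of the classical Hirsch lemma for minimal models. Before comparing cohomologies I would first check that $\tilde\rho$ is genuinely a chain map, i.e. that it commutes with $\bar\partial$. On the subalgebra $A^{\bullet,\bullet}$ this is immediate, since $\tilde\rho|_A=\pi^{\ast}\circ\rho$ and $\pi^{\ast}\bar\partial=\bar\partial\pi^{\ast}$. On the generators it is precisely the transgression bookkeeping built into the construction: $\tilde\rho(\bar\partial\bm x_i)=\tilde\rho(a_i)=\pi^{\ast}\rho(a_i)=\pi^{\ast}\beta_i=\bar\partial\tilde\alpha_i=\bar\partial\tilde\rho(\bm x_i)$, and this propagates to all of $T$ by the Leibniz rule, using that both $\bar\partial$ and $\tilde\rho$ are algebra derivations/morphisms. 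This reduces the assertion to showing that $\tilde\rho_{\ast}$ is an isomorphism on $\bar\partial$-cohomology.

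Next I would equip both complexes with compatible decreasing filtrations and compare the resulting spectral sequences. On $\mathcal{A}^{\bullet,\bullet}(E)$ take the Leray filtration, whose $k$-th stage consists of forms locally of the shape $\pi^{\ast}\eta\wedge\zeta$ with $\eta$ pulled back from a base form of degree $\geq k$; since $\pi^{\ast}\bar\partial=\bar\partial\pi^{\ast}$ this is a filtration by subcomplexes, its $E_0$-differential is the fibrewise $\bar\partial$, and its $E_1$-page consists of base forms valued in fibre Dolbeault cohomology. On $T=A^{\bullet,\bullet}\otimes H^{\bullet,\bullet}_{\bar\partial}(F)$ take the filtration by total $A$-degree. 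Here the crucial numerical point is that every transgression term $a_i=\bar\partial\bm x_i$ has total degree $p_i+q_i+1\geq 2$ (the chosen fibre generators are nontrivial, hence of positive degree), so the twisting part of the differential strictly raises the filtration by at least two and is invisible on the first two pages; consequently the $E_1$-differential on $T$ is just $\bar\partial_A\otimes\mathrm{id}$ and its $E_2$-page is $H(A^{\bullet,\bullet},\bar\partial)\otimes H^{\bullet,\bullet}_{\bar\partial}(F)\cong H^{\bullet,\bullet}_{\bar\partial}(B)\otimes H^{\bullet,\bullet}_{\bar\partial}(F)$, where the last identification uses that $\rho$ is a quasi-isomorphism. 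The map $\tilde\rho$ respects both filtrations (it sends $A$-degree $k$ into Leray stage $k$, and sends the fibre classes $\bm x_i$, of $A$-degree $0$, to the fibre forms $\tilde\alpha_i$), so it induces a morphism of spectral sequences, and on the $E_2$-page it is visibly $\rho_{\ast}\otimes\mathrm{id}$: the base factor is the isomorphism induced by the quasi-isomorphism $\rho$, while on the fibre factor each $\tilde\alpha_i$ restricts to a representative of $\bm x_i$ on $F$ and the $\bm x_i$ generate $H^{\bullet,\bullet}_{\bar\partial}(F)$ freely. Hence $\tilde\rho$ is an isomorphism on $E_2$.

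Finally, since $E$, $B$ and $F$ are compact, form-degrees are bounded, so both filtrations are finite and the two spectral sequences converge to $H^{\bullet,\bullet}_{\bar\partial}(E)$ and to $H(T,\bar\partial)$ respectively; the standard criterion that a filtered chain map which is an isomorphism on some page $E_r$ is a quasi-isomorphism then upgrades the $E_2$-isomorphism to an isomorphism of the abutments, which is exactly the claim. The step I expect to be the real obstacle is the identification of the geometric $E_2$-page with the tensor product $H^{\bullet,\bullet}_{\bar\partial}(B)\otimes H^{\bullet,\bullet}_{\bar\partial}(F)$: this is the Dolbeault analogue of the Leray--Hirsch theorem and is precisely where the hypotheses enter, the connectedness of the structure group trivialising the local system of fibre cohomologies and the freeness-plus-transgressivity of $H^{\bullet,\bullet}_{\bar\partial}(F)$ guaranteeing that the generators lift to global forms $\tilde\alpha_i$ on $E$ reproducing the correct multiplicative structure. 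A clean alternative, sidestepping this identification, is to induct on the number of generators $\bm x_i$, realising $T$ as an iterated Hirsch (Koszul--Sullivan) extension and controlling each single-generator step by a Gysin/Wang-type exact sequence together with the five lemma.
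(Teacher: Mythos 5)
The paper does not prove this lemma at all: it is imported verbatim as \cite[Lemma 18]{CFGU00} and used as a black box (its only application here being to the projectivized bundle $\mathbb{P}(V)\to B$ with fibre $\mathbb{CP}^{r-1}$ in Proposition \ref{projective-formula}), so there is no internal proof to compare your argument against. What you have written is essentially a reconstruction of the standard proof of the cited result, in the style of Borel's spectral sequence for holomorphic bundles and the classical Hirsch lemma, and the outline is sound: the chain-map verification via $\tilde\rho(\bar\partial\bm x_i)=\pi^{\ast}\beta_i=\bar\partial\tilde\alpha_i$ is correct; the filtration of $T$ by total $A$-degree does make the twisting part of $\bar\partial_T$ raise filtration by at least two (each generator has positive total degree, so $a_i$ has total degree $\geq 2$), giving $E_2(T)\cong H^{\bullet,\bullet}_{\bar\partial}(B)\otimes H^{\bullet,\bullet}_{\bar\partial}(F)$ via the quasi-isomorphism $\rho$; $\tilde\rho$ is filtration-preserving since $\pi^{\ast}\rho(A^k)$ sits in Leray level $k$ and the $\tilde\alpha_i$ in level $0$; and compactness bounds both filtrations so the mapping theorem for spectral sequences upgrades an $E_2$-isomorphism to a quasi-isomorphism. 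You have also correctly located the genuine technical content in the one step you deferred: identifying the $E_1$-page of the geometric (Leray-type) filtration with base forms valued in fibre Dolbeault cohomology and its $E_2$-page with $H^{\bullet,\bullet}_{\bar\partial}(B)\otimes H^{\bullet,\bullet}_{\bar\partial}(F)$. This is the Dolbeault Leray--Hirsch statement, and it is exactly where all the hypotheses are consumed: holomorphic local triviality makes the fibrewise $\bar\partial$-cohomologies into a flat bundle, connectedness of the structure group kills the monodromy, and the forms $\tilde\alpha_i$ (whose fibrewise restrictions are $\bar\partial$-closed precisely because $\bar\partial\tilde\alpha_i=\pi^{\ast}\beta_i$ vanishes on fibres) supply global classes restricting to a basis on every fibre. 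Your fallback route---induction on generators through iterated Hirsch extensions, controlling each step with a Wang/Gysin-type sequence and the five lemma---is likewise the classical alternative and would work here; either completion would constitute an honest proof of the lemma the paper merely quotes.
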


Now we apply the Hirsch Lemma to projective bundles.
Suppose that $V$ is a holomorphic vector bundle of rank $r$ over a connected compact complex manifold $B$ of $\textmd{dim}_{\mathbb{C}}B=n$.
Consider the projectivization of the bundle $V$.
Then we get a holomorphic fiber bundle $\mathbb{P}(V)$ over $B$ with the fibre $F\cong \mathbb{CP}^{r-1}$.
Note that the total space $\mathbb{P}(V)$ is connected and the structure group $\mathrm{PGL(r,\mathbb{C})}$ is also connected.
One has to check the conditions in Hirsch Lemma for $\mathbb{P}(V)$.

\begin{lem}\label{transg}
The Dolbeault cohomology ring $H_{\bar{\partial}}^{\bullet,\bullet}(F)$ is a transgressive free bialgebra over $\mathbb{C}$.
\end{lem}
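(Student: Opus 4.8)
The plan is to make the ring structure of $H^{\bullet,\bullet}_{\bar\partial}(F)$ explicit and then to exhibit a single algebra generator that transgresses in the fibration $F \hookrightarrow \mathbb{P}(V) \xrightarrow{\pi} B$. First I would record the well-known Dolbeault cohomology of $F \cong \mathbb{CP}^{r-1}$: being projective it is K\"ahler, so its de Rham cohomology carries a Hodge decomposition, and together with $H^{2k}_{dR}(\mathbb{CP}^{r-1};\mathbb{C}) = \mathbb{C}$ for $0 \le k \le r-1$ (and $0$ in odd degree) this gives
\begin{equation*}
H^{p,q}_{\bar\partial}(\mathbb{CP}^{r-1}) =
\begin{cases}
\mathbb{C}, & p=q,\ 0\le p\le r-1,\\
0, & \text{otherwise.}
\end{cases}
\end{equation*}
Writing $\omega \in H^{1,1}_{\bar\partial}(\mathbb{CP}^{r-1})$ for the class of the Fubini--Study form (equivalently $c_1(\mathcal{O}(1))$), one has $H^{\bullet,\bullet}_{\bar\partial}(\mathbb{CP}^{r-1}) = \mathbb{C}[\omega]/(\omega^r)$, the truncated polynomial algebra on the single generator $\omega$ in bidegree $(1,1)$. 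Thus $\{\omega\}$ is an algebra basis, and the bigraded algebra is free on this one even-type generator, the vanishing $\omega^r=0$ being forced by $\dim_{\mathbb{C}} F = r-1$.

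Next I would verify that $\omega$ is transgressive for the projective bundle at hand. This is where the specific geometry of $\mathbb{P}(V)$ enters, since transgressivity is a condition relative to the total space, not an intrinsic property of $F$. I would take as extension $\tilde\omega = \bm h$, a Chern form of the tautological bundle $\mathcal{O}_{\mathbb{P}(V)}(1)$ for a fixed Hermitian metric. As already used in the de Rham discussion, $\bm h$ is a global real $(1,1)$-form on $\mathbb{P}(V)$ restricting on each fiber to a representative of the fiber generator $\omega$; being a Chern form it is $d$-closed, hence $\bar\partial$-closed. Its transgression is therefore trivial: $\bar\partial \bm h = 0 = \pi^{*}0$, so one may take $\beta = 0$, which is $\bar\partial$-closed on $B$. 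Hence $\omega$ transgresses, and, since it is the sole algebra generator, $H^{\bullet,\bullet}_{\bar\partial}(F)$ is transgressive. Moreover $\omega^i$ extends globally as $\bm h^i$, so every basis element $1, \omega, \dots, \omega^{r-1}$ of the underlying vector space lifts to $\mathbb{P}(V)$, which is exactly the input that Lemma \ref{hirsch} will consume.

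The only delicate point is the word ``free,'' since $\mathbb{C}[\omega]/(\omega^r)$ is a truncated rather than a genuine polynomial algebra. The step I would be most careful to phrase is that in the bigraded Hirsch-lemma formalism of \cite{CFGU00} the operative requirement is that $H^{\bullet,\bullet}_{\bar\partial}(F)$ be generated by transgressive classes, the top relation $\omega^r=0$ being absorbed into the transgression; concretely, the truncated polynomial algebra is presented as the image of the free algebra on $\omega$, and the tensor model $A^{\bullet,\bullet}\otimes H^{\bullet,\bullet}_{\bar\partial}(F)$ then acquires precisely the free-module basis $1,\omega,\dots,\omega^{r-1}$ over a model $A^{\bullet,\bullet}$ of $B$ needed to reproduce the projective-bundle formula. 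Once this identification is spelled out, the lemma follows from the two facts established above.
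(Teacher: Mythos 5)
Your proposal is correct and follows essentially the same route as the paper: identify $H^{\bullet,\bullet}_{\bar\partial}(\mathbb{CP}^{r-1})$ with $\mathbb{C}[\bm t]/(\bm t^r)$ via the Hodge decomposition, then extend the generator to the $\bar\partial$-closed class $c_1(\mathcal{O}_{\mathbb{P}(V)}(1))$ on the total space so that the transgression is $\pi^{\ast}$ of the zero form. Your closing remark on the meaning of ``free'' for the truncated polynomial algebra is a reasonable clarification of terminology the paper uses implicitly, but it does not change the argument.
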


\begin{proof}
Note that $F\cong\mathbb{CP}^{r-1}$.
Let $t$ be the K\"{a}hler form of Fubini-Study metric on $\mathbb{CP}^{r-1}$.
Then the de Rham cohomology ring of $\mathbb{CP}^{r-1}$ is $\mathbb{C}[\bm t]/(\bm t^{r})$.
On the other hand, the generator $\bm t$ can be thought of as an element of $H^{1,1}_{\bar{\partial}}(\mathbb{CP}^{r-1})$.
According to the Hodge decomposition theorem we get that the Dolbeault cohomology ring of $\mathbb{CP}^{r-1}$ is isomorphic to $\mathbb{C}[\bm t]/(\bm t^{r})$ as a free bialgebra over $\mathbb{C}$.
Therefore, it remains to show that the generator $\bm t$ is transgressive.

Set $\mathcal{O}_{\mathbb{P}(V)}(1)$ as the tautological line bundle over $\mathbb{P}(V)$ and $\tilde{\bm t}=c_1(\mathcal{O}_{\mathbb{P}(V)}(1))$.
Then $\tilde{\bm t}$ represents a non-trivial class in $H_{\bar{\partial}}^{1,1}(\mathbb{P}(V))$ such that the restriction of
$\tilde{\bm t}$ is just the generator of $H^{\bullet,\bullet}_{\bar{\partial}}(F)$, i.e., $\tilde{\bm t}|_{F}=\bm t$.
Since $ \bar{\partial}\tilde{\bm t}=0$, we have
$\bar{\partial}\tilde{\bm t}=\pi^{\ast} \beta,$
where $\beta$ is the zero form on $B$.
From definition, we get that $\bm t$ is transgressive and hence each generator of $H_{\bar{\partial}}^{\bullet,\bullet}(F)$ is transgressive.
This completes the proof.
\end{proof}

In particular, as a direct consequence of Lemma \ref{transg} and the Hirsch Lemma \ref{hirsch} we have the following result.
\begin{prop}\label{projective-formula}
For any $0\leq p, q\leq n$, we have the following identity:
\begin{equation*}
H_{\bar{\partial}}^{p,q}(\mathbb{P}(V))
=\bigoplus_{i=0}^{r-1} \tilde{\bm t}^{i}\wedge\pi^{\ast}H_{\bar{\partial}}^{p-i,q-i}(B),
\end{equation*}
where $\tilde{\bm t}=c_1(\mathcal{O}_{\mathbb{P}(V)}(1))$.
\end{prop}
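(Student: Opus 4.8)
The plan is to apply the Hirsch Lemma \ref{hirsch} directly to the projectivization $\pi:\mathbb{P}(V)\to B$, feeding it the structural data supplied by Lemma \ref{transg}. Since $\mathbb{P}(V)$ is connected with connected structure group $\mathrm{PGL}(r,\mathbb{C})$ and, by Lemma \ref{transg}, its fiber $F\cong\mathbb{CP}^{r-1}$ has free transgressive Dolbeault cohomology $H^{\bullet,\bullet}_{\bar{\partial}}(F)=\mathbb{C}[\bm t]/(\bm t^{r})$ generated by the single class $\bm t$ of bidegree $(1,1)$, all hypotheses of the Hirsch Lemma are in place. The most economical choice of a model for $(\mathcal{A}^{\bullet,\bullet}(B),\bar{\partial})$ is the complex itself: I would take $(A^{\bullet,\bullet},\bar{\partial})=(\mathcal{A}^{\bullet,\bullet}(B),\bar{\partial})$ with $\rho=\mathrm{id}$, which is trivially a quasi-isomorphism.

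The key observation is that the transgression vanishes. Indeed, in the proof of Lemma \ref{transg} the chosen extension of $\bm t$ is $\tilde{\bm t}=c_{1}(\mathcal{O}_{\mathbb{P}(V)}(1))$, which is $\bar{\partial}$-closed, so $\bar{\partial}\tilde{\bm t}=\pi^{\ast}\beta$ with $\beta=0$. Tracing through the construction preceding the Hirsch Lemma, the associated $\bar{\partial}$-closed element $a$ with $\beta=\rho(a)$ is therefore $0$, so the differential on $T=\mathcal{A}^{\bullet,\bullet}(B)\otimes H^{\bullet,\bullet}_{\bar{\partial}}(F)$ acts only on the first tensor factor and annihilates $\bm t$. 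Because $H^{\bullet,\bullet}_{\bar{\partial}}(F)$ is finite-dimensional with zero differential, a standard K\"unneth-type argument gives
$$
H^{\bullet,\bullet}_{\bar{\partial}}(T)\cong H^{\bullet,\bullet}_{\bar{\partial}}(B)\otimes H^{\bullet,\bullet}_{\bar{\partial}}(F).
$$
Reading off the $(p,q)$-part and using that $\bm t^{i}\in H^{i,i}_{\bar{\partial}}(F)$ for $0\leq i\leq r-1$, this becomes $\bigoplus_{i=0}^{r-1}H^{p-i,q-i}_{\bar{\partial}}(B)\otimes\bm t^{i}$.

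Finally, I would transport this identification across the isomorphism $\tilde{\rho}$ furnished by the Hirsch Lemma \ref{hirsch}. By its very definition $\tilde{\rho}$ restricts to $\pi^{\ast}$ on the base factor $A=\mathcal{A}^{\bullet,\bullet}(B)$ and sends $\bm t$ to $\tilde{\bm t}$, hence $\bm t^{i}$ to $\tilde{\bm t}^{i}$ by multiplicativity; a class $[\omega]\otimes\bm t^{i}$ therefore maps to $\tilde{\bm t}^{i}\wedge\pi^{\ast}[\omega]$. Combining this with the previous display yields exactly
$$
H^{p,q}_{\bar{\partial}}(\mathbb{P}(V))=\bigoplus_{i=0}^{r-1}\tilde{\bm t}^{i}\wedge\pi^{\ast}H^{p-i,q-i}_{\bar{\partial}}(B),
$$
as claimed. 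There is little genuine difficulty here once the Hirsch Lemma is in hand; the only points requiring care are verifying that the transgression is genuinely zero, so that the differential on $T$ does not mix the two factors, and keeping track of the bidegree shift $(i,i)$ contributed by $\tilde{\bm t}^{i}$, which is precisely what produces the index shift $(p-i,q-i)$ on the base.
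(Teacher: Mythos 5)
Your proposal is correct and follows essentially the same route as the paper: choosing $A^{\bullet,\bullet}=\mathcal{A}^{\bullet,\bullet}(B)$ with $\rho=\mathrm{id}$, observing that the vanishing transgression ($\bar{\partial}\tilde{\bm t}=0$) makes the differential on $T$ act only on the base factor so that $H_{\bar{\partial}_T}^{\bullet,\bullet}(T)\cong H^{\bullet,\bullet}_{\bar{\partial}}(B)\otimes H^{\bullet,\bullet}_{\bar{\partial}}(F)$, and then transporting this through the Hirsch Lemma isomorphism $\tilde{\rho}$ with a bidegree check. The only cosmetic difference is that you spell out explicitly why the differential does not mix the factors, which the paper leaves implicit in its definition of $\bar{\partial}_T$.
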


\begin{proof}
Consider the projective bundle
$$
\pi: \mathbb{P}(V)\to B.
$$
Let $A^{\bullet,\bullet}:=\mathcal{A}^{\bullet,\bullet}(B)$ and let $\rho$ be the identity map from $A^{\bullet,\bullet}$ to $\mathcal{A}^{\bullet,\bullet}(B)$.
Consider the tensor product $T:=A^{\bullet,\bullet}\otimes H^{\bullet,\bullet}_{\bar{\partial}}(F)$.
From the proof of Lemma \ref{transg} there exists a natural differential $\bar{\partial}_{T}$ of type (0,1) by
$
\bar{\partial}_{T}(a\otimes b)=(\bar{\partial}a)\otimes b,
$
for any $a\otimes b\in T$.
In addition, we can define a bialgebra morphism
\begin{equation}\label{equ3.1}
\tilde{\rho}:T=A^{\bullet,\bullet}\otimes H^{\bullet,\bullet}_{\bar{\partial}}(F)
\rightarrow \mathcal{A}^{\bullet,\bullet}(\mathbb{P}(V))
\end{equation}
by setting $\tilde{\rho}|_{A^{\bullet,\bullet}}=\pi^{\ast}$ and $\tilde{\rho}(\bm t)=\tilde{\bm t}$.
On the one hand, according to the Hirsch Lemma \ref{hirsch} we get that the map $\tilde{\rho}$ in (\ref{equ3.1}) induces an isomorphism on cohomology:
\begin{equation}\label{equ3.2}
   \tilde{\rho}:H^{\bullet,\bullet}_{\bar{\partial}_{T}}(A^{\bullet,\bullet}\otimes H^{\bullet,\bullet}_{\bar{\partial}}(F))
   \stackrel{\cong}{\longrightarrow}  H^{\bullet,\bullet}_{\bar{\partial}}(\mathbb{P}(V)).
\end{equation}
On the other hand, from the definition of the operator $\bar{\partial}_{T}$ we get
$$\label{equ3.3}
H^{\bullet,\bullet}_{\bar{\partial}_{T}}(A^{\bullet,\bullet}\otimes H^{\bullet,\bullet}_{\bar{\partial}}(F))
=H^{\bullet,\bullet}_{\bar{\partial}}(B)\otimes H^{\bullet,\bullet}_{\bar{\partial}}(F).
$$
This implies that the isomorphism in (\ref{equ3.2}) is equivalent to
\begin{equation}\label{equ3.4}
\tilde{\rho}: H_{\bar{\partial}}^{\bullet,\bullet}(B) \otimes H^{\bullet,\bullet}_{\bar{\partial}}(F)
\stackrel{\cong}{\longrightarrow} H_{\bar{\partial}}^{\bullet,\bullet}(\mathbb{P}(V)).
\end{equation}
Via a degree checking in (\ref{equ3.4}) we have the following identity:
$$
H^{p,q}_{\bar{\partial}}(\mathbb{P}(V))
=  \tilde{\rho}\biggl(\sum^{r-1}_{i=0}\bm t^{i}\wedge H^{p-i,q-i}_{\bar{\partial}}(B)\biggr)
= \sum^{r-1}_{i=0}\tilde{\bm t}^{i}\wedge \pi^{*}H^{p-i,q-i}_{\bar{\partial}}(B).
$$
This completes the proof.
\end{proof}
%=========================================================================
\subsection{Dolbeault blow-up formula}\label{sec-bu}
From now on we assume that $X$ is a compact complex manifold of complex dimension $n$.
Suppose that $\imath:Z \hookrightarrow X$ is a closed complex submanifold of complex codimension $r\geq2$.
Without loss of generality, we assume that $Z$ is connected;
otherwise, we can carry out the blow-up operation along each connected component of $Z$ step by step.
Recall that the \emph{normal bundle} $T_{X|Z}/T_Z$ of $Z$ in $X$, denoted by $\mathcal{N}_{Z/X}$, is a holomorphic vector bundle of rank $r$.
The \emph{blow-up $\tilde{X}$ of $X$ with center $Z$} is a projective morphism  $\pi: \tilde{X}\to X$ such that
$$
\pi: \tilde{X}-E \to X-Z
$$
is a biholomorphism.
Here
$$
E:=\pi^{-1}(Z)\cong \mathbb{P}(\mathcal{N}_{Z/X})
$$
is the \emph{exceptional divisor} of the blow-up.
Then one has the following blow-up diagram
\begin{equation}\label{blow-up-diag3}
\xymatrix{
E \ar[d]_{\pi_{E}} \ar@{^{(}->}[r]^{\tilde{\imath}} & \tilde{X}\ar[d]^{\pi}\\
 Z \ar@{^{(}->}[r]^{\imath} & X.
}
\end{equation}
In particular, due to Proposition \ref{projective-formula}, for any $0\leq p,q\leq n-1$,
the $(p,q)$-Dolbeault cohomology of the exceptional divisor $E$ is
\begin{equation}\label{equ3.5}
H_{\bar{\partial}}^{p,q}(E)
\cong \bigoplus_{i=0}^{r-1} \tilde{\bm t}^{i}\wedge\pi_{E}^{\ast}H_{\bar{\partial}}^{p-i,q-i}(Z),
\end{equation}
where $\tilde{\bm t}=c_1(\mathcal{O}_{E}(1))$.

%Consider the pairs $(X,Z)$, $(\tilde{X}, E)$.
%Then there exists a short exact sequence of complexes
%\begin{equation*}
%\xymatrix@C=0.5cm{
%  0 \ar[r] & \mathcal{A}^{p,\bullet}(X,Z) \ar[r]^{} & \mathcal{A}^{p,\bullet}(X) \ar[r]^{} & \mathcal{A}^{p,\bullet}(Z) \ar[r] & 0 }
%\end{equation*}
%and thus two long exact sequence of cohomology groups
%\begin{equation}\label{lon-seq-hyper1}
%\xymatrix@C=0.5cm{
%  \cdots \ar[r] & H^{p,q}_{\bar{\partial}}(X,Z)\ar[r]^{} & H^{p,q}_{\bar{\partial}}(X)
%  \ar[r]^{} & H^{p,q}_{\bar{\partial}}(Z)\ar[r]^(0.4){\delta} & H^{p,q+1}_{\bar{\partial}}(X,Z) \ar[r] & \cdots },
%\end{equation}
%\begin{equation}\label{lon-seq-coh2}
%\xymatrix@C=0.5cm{
%  \cdots \ar[r] & H^{p,q}_{\bar{\partial}}(\tilde{X},E)\ar[r]^{} & H^{p,q}_{\bar{\partial}}(\tilde{X})
%  \ar[r]^{} & H^{p,q}_{\bar{\partial}}(E)\ar[r]^(0.4){\tilde{\delta}} & H^{p,q+1}_{\bar{\partial}}(\tilde{X},E) \ar[r] & \cdots },
%\end{equation}
%where $\delta,\tilde{\delta}$ are the corresponding coboundary operators.

As the blow-up morphism $\pi$ induces a natural commutative diagram for the short exact sequences of complexes
\begin{equation*}
\xymatrix@C=0.5cm{
0 \ar[r]^{} & \mathcal{A}^{p,\bullet}(X,Z) \ar[d]_{\pi^{\ast}} \ar[r]^{} & \mathcal{A}^{p,\bullet}(X)\ar[d]_{\pi^{\ast}} \ar[r]^{} & \mathcal{A}^{p,\bullet}(Z)
\ar[d]_{\pi_{E}^{\ast}} \ar[r]^{} & 0 \\
0 \ar[r] & \mathcal{A}^{p,\bullet}(\tilde{X},E) \ar[r]^{} & \mathcal{A}^{p,\bullet}(\tilde{X})
 \ar[r]^{} &
\mathcal{A}^{p,\bullet}(E)\ar[r]^{} & 0,}
\end{equation*}
the long exact sequence \eqref{long-exact-relative} and the standard diagram chasing give rise to a commutative diagram
$$
\xymatrix@C=0.5cm{
\cdots \ar[r]^{} & H_{\bar{\partial}}^{p, q}(X,Z)\ar[d]_{\pi^{\ast}} \ar[r]^{}
&H_{\bar{\partial}}^{p,q}(X)
\ar[d]_{\pi^{\ast}} \ar[r]^{}
& H^{p,q}_{\bar{\partial}}(Z)
\ar[d]_{\pi_{E}^{\ast}}
\ar[r]^(0.4){\delta}
& H_{\bar{\partial}}^{p, q+1}(X,Z) \ar[d]_{\pi^{\ast}} \ar[r]^{}&\cdots\\
\cdots \ar[r]^{} & H_{\bar{\partial}}^{p, q}(\tilde{X},E) \ar[r]^{} & H_{\bar{\partial}}^{p,q}(\tilde{X}) \ar[r]^{}
& H^{p,q}_{\bar{\partial}}(E) \ar[r]^(0.4){\tilde{\delta}}
& H_{\bar{\partial}}^{p, q+1}(\tilde{X},E) \ar[r]^{}& \cdots,}
$$
where $\delta,\tilde{\delta}$ are the corresponding coboundary operators.
\begin{prop}\label{tech-prop}
The blow-up morphism $\pi:\tilde{X}\rightarrow X$ induces an isomorphism
$$
H^{p,q}_{\bar{\partial}}(X,Z)\cong H^{p,q}_{\bar{\partial}}(\tilde{X},E)
$$
for each $p,q\geq0$.
\end{prop}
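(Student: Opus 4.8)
The plan is to promote the chain map $\pi^{\ast}\colon\mathcal{A}^{p,\bullet}(X,Z)\to\mathcal{A}^{p,\bullet}(\tilde X,E)$ appearing in the commutative ladder above to a quasi-isomorphism. One cannot simply invoke the five lemma on that ladder, since the middle vertical arrow $\pi^{\ast}\colon H^{p,q}_{\bar\partial}(X)\to H^{p,q}_{\bar\partial}(\tilde X)$ is precisely the unknown that Main Theorem \ref{main-thm} computes, so it is not available as an isomorphism. I would begin instead with the elementary observation that $\pi^{\ast}$ is already \emph{injective} at the cochain level: $\pi$ is surjective and restricts to a biholomorphism over $X-Z$, so a relative form pulling back to $0$ vanishes on the dense set $\tilde X-E$ and hence everywhere. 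Thus the entire content is that $\pi^{\ast}$ induces an isomorphism on cohomology, equivalently that the cokernel complex $\mathcal{A}^{p,\bullet}(\tilde X,E)/\pi^{\ast}\mathcal{A}^{p,\bullet}(X,Z)$ is acyclic. Checking this by hand is delicate, because a relative form on $\tilde X$ need not descend even though its restriction to $E$ vanishes, which motivates sheafifying the problem.

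Next I would pass to fine-sheaf resolutions. Let $\mathscr{R}^{p,q}_{M,N}$ be the sheaf on $M$ whose sections over $U$ are the $(p,q)$-forms $\alpha$ with $i^{\ast}\alpha=0$ on $U\cap N$; these are fine, so $H^{p,q}_{\bar\partial}(M,N)=\mathbb{H}^{q}\big(M,(\mathscr{R}^{p,\bullet}_{M,N},\bar\partial)\big)$. Since $\pi(E)\subseteq Z$, pullback gives a morphism $\pi^{-1}\mathscr{R}^{p,\bullet}_{X,Z}\to\mathscr{R}^{p,\bullet}_{\tilde X,E}$ and hence, by adjunction, a morphism $\mathscr{R}^{p,\bullet}_{X,Z}\to R\pi_{\ast}\mathscr{R}^{p,\bullet}_{\tilde X,E}$ of complexes of sheaves on $X$. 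Because $\pi$ is proper, the Leray spectral sequence yields $\mathbb{H}^{q}(\tilde X,\mathscr{R}^{p,\bullet}_{\tilde X,E})\cong\mathbb{H}^{q}(X,R\pi_{\ast}\mathscr{R}^{p,\bullet}_{\tilde X,E})$, and on global hypercohomology the above morphism recovers $\pi^{\ast}$. It therefore suffices to show that $\mathscr{R}^{p,\bullet}_{X,Z}\to R\pi_{\ast}\mathscr{R}^{p,\bullet}_{\tilde X,E}$ is a quasi-isomorphism, i.e. an isomorphism on all cohomology sheaves and stalks. Over $X-Z$ this is immediate, as $\pi$ is a biholomorphism there.

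The remaining, and essential, case is a point $x\in Z$, where properness reduces the stalk of $R^{b}\pi_{\ast}$ to the hypercohomology of the relative Dolbeault complex along the fibre $\pi^{-1}(x)\cong\mathbb{CP}^{r-1}$. Here I would localize to the model $X=\mathbb{C}^{r}\times\mathbb{C}^{n-r}$ with $Z=\{0\}\times\mathbb{C}^{n-r}$ (restricted to polydisc neighbourhoods), $\tilde X$ the blow-up along $Z$, and $E\cong\mathbb{CP}^{r-1}\times\mathbb{C}^{n-r}$, and compute both relative complexes directly. Combining the $\bar\partial$-Poincaré (Dolbeault--Grothendieck) lemma on the polydisc factor with the projective-bundle computation of Proposition \ref{projective-formula} on the $\mathbb{CP}^{r-1}$ factor, and crucially using that the relative condition $i^{\ast}\alpha=0$ annihilates the tautological class $\tilde{\bm t}$ along $E$, should show that the higher direct images $R^{b}\pi_{\ast}\mathscr{R}^{p,\bullet}_{\tilde X,E}$ vanish for $b>0$ while $\pi^{\ast}$ identifies $\mathscr{R}^{p,\bullet}_{X,Z}$ with $\pi_{\ast}\mathscr{R}^{p,\bullet}_{\tilde X,E}$ up to quasi-isomorphism.

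The main obstacle is exactly this local vanishing: one must verify that every relative $\bar\partial$-closed form on the model which lies in the essential image of $\pi^{\ast}$ admits a \emph{relative} primitive, with control uniform in $x\in Z$, so that the fibrewise extra classes $\tilde{\bm t}^{i}\wedge\pi_{E}^{\ast}(\,\cdot\,)$ of \eqref{equ3.5} contribute nothing after imposing $i^{\ast}=0$. This is the relative analogue of the Dolbeault--Grothendieck lemma adapted to the exceptional fibre, and it is precisely the vanishing of the relevant direct image sheaves later made explicit by Meng (see Remark \ref{meng-rk}). Once it is established, the quasi-isomorphism of sheaf complexes descends through the Leray identification to the asserted isomorphism $H^{p,q}_{\bar\partial}(X,Z)\cong H^{p,q}_{\bar\partial}(\tilde X,E)$, naturally in the bidegree $(p,q)$.
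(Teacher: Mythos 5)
Your reduction is sound as far as it goes: injectivity of $\pi^{\ast}$ at the cochain level, sheafification to the relative Dolbeault sheaves (which are fine), and the Leray identification reducing everything to showing that $\mathscr{R}^{p,\bullet}_{X,Z}\to R\pi_{\ast}\mathscr{R}^{p,\bullet}_{\tilde X,E}$ is a quasi-isomorphism, with the only issue at points of $Z$. But the proposal stops exactly where the proposition begins: the local vanishing of the higher direct images --- equivalently, a relative Dolbeault--Grothendieck lemma on the blow-up of the polydisc model --- is declared to be ``the main obstacle'' and is not proved; you defer it to Meng's work cited in Remark \ref{meng-rk}. That vanishing \emph{is} the content of Proposition \ref{tech-prop}; everything else in your argument is formal. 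Moreover, the tool you propose for the local computation does not apply as stated: Proposition \ref{projective-formula} rests on the Hirsch Lemma \ref{hirsch}, whose hypotheses require the base and total space of the fibration to be \emph{compact} complex manifolds, whereas your local model has a polydisc base, and Dolbeault cohomology of non-compact spaces does not obey such Leray--Hirsch/K\"unneth identities without substantial extra work. Note also that even Meng's completion of this sheaf-theoretic route is not a bare-hands local computation: as Remark \ref{meng-rk} records, his proof of the vanishing \eqref{meng} itself invokes the isomorphisms $(i)$--$(iii)$ from the proof of \cite[Theorem 8]{Jo18}.

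For comparison, the paper bypasses the local question entirely. It forms the quotient complexes $\mathcal{A}^{\bullet,\bullet}(\tilde X|X)=\mathcal{A}^{\bullet,\bullet}(\tilde X)/\pi^{\ast}\mathcal{A}^{\bullet,\bullet}(X)$ and $\mathcal{A}^{\bullet,\bullet}(E|Z)$, imports Stelzig's theorem that $(\tilde{\imath}|\imath)^{\ast}:H^{p,q}_{\bar{\partial}}(\tilde X|X)\to H^{p,q}_{\bar{\partial}}(E|Z)$ is an isomorphism, deduces $\ker(\imath^{\ast})\cong\ker(\tilde{\imath}^{\ast})$ and $\mathrm{coker}\,(\imath^{\ast})\cong\mathrm{coker}\,(\tilde{\imath}^{\ast})$ from the Snake Lemma, and then splits the long exact sequences of the pairs $(X,Z)$ and $(\tilde X,E)$ (identifying $\ker(j_{q})\cong\mathrm{coker}\,(\imath^{\ast}_{q-1})$ and $\mathrm{Im}\,(j_{q})=\ker(\imath^{\ast}_{q})$) to transfer that isomorphism to the relative groups. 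Your instinct that the naive five lemma is unavailable is correct, but the paper's fix is to obtain an isomorphism on the quotient complexes from an external source and propagate it by homological algebra, not to establish the sheaf-level quasi-isomorphism from scratch. As it stands, your proposal is a correct formal reduction plus an unproved key lemma, so it is not a complete proof.
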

Proposition \ref{tech-prop} is to be proved in Subsection \ref{prop3.7}, where the equivalence of the isomorphisms $H^{p,q}_{\bar{\partial}}(\tilde{X}|X)\cong H^{p,q}_{\bar{\partial}}(E|Z)$ and  $
H^{p,q}_{\bar{\partial}}(X,Z)\cong H^{p,q}_{\bar{\partial}}(\tilde{X},E)
$ will be given.
Since
$
\pi:\tilde{X}\rightarrow X$ and $\pi_{E}:E\rightarrow Z
$
are proper surjective holomorphic maps,
$
\pi^{\ast}:H^{p,q}_{\bar{\partial}}(X)\rightarrow H^{p,q}_{\bar{\partial}}(\tilde{X})
$
and thus
$
\pi_{E}^{\ast}:H^{p,q}_{\bar{\partial}}(Z)
\rightarrow
H^{p,q}_{\bar{\partial}}(E)
$
are injective by \cite[Theorem 3.1]{Wells74} and the Weak Five Lemma \cite[Lemma $3.3.(i)$]{mc}, respectively. Moreover, one has:
\begin{prop}[{cf. \cite[Proposition 3.3]{YZ15}}]\label{cok-is}
 Consider a commutative diagram of abelian groups such that its horizontal rows are exact
\begin{equation*}
\xymatrix@C=0.5cm{
  \cdots \ar[r]^{} & A_1 \ar[d]_{i_1} \ar[r]^{f_1}& A_2 \ar[d]_{i_2} \ar[r]^{f_2}& A_3 \ar[d]_{i_3} \ar[r]^{f_3}& A_4 \ar[d]_{i_4} \ar[r]^{f_4}& A_5 \ar[d]_{i_5} \ar[r]^{} & \cdots \\
\cdots \ar[r]^{} & B_1 \ar[r]^{g_1}& B_2 \ar[r]^{g_2}& B_3 \ar[r]^{g_3}& B_4 \ar[r]^{g_4}& B_5   \ar[r]^{} & \cdots . }
\end{equation*}
Assume that $i_1$ is epimorphic, $i_2,i_3,i_5$ are monomorphic and $i_4$ is isomorphic.
Then there exists a natural isomorphism
$$
\mathrm{coker}\,i_2\cong\mathrm{coker}\,i_3.
$$
\end{prop}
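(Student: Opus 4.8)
The plan is to produce an explicit candidate for the isomorphism and then verify that it is bijective by a five-lemma-style diagram chase. Since $i_2$ and $i_3$ are monomorphic, I identify $\mathrm{coker}\, i_2 = B_2/\im i_2$ and $\mathrm{coker}\, i_3 = B_3/\im i_3$. The commutativity relation $g_2\circ i_2 = i_3\circ f_2$ shows that $g_2$ carries $\im i_2$ into $\im i_3$, so $g_2$ descends to a homomorphism
$$\bar g_2:\mathrm{coker}\, i_2 \longrightarrow \mathrm{coker}\, i_3,\qquad \bar g_2([b_2]) = [g_2(b_2)].$$
The goal is to show $\bar g_2$ is an isomorphism; since it is induced by a map already present in the diagram, it is automatically natural, which yields the naturality asserted in the statement.

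For surjectivity I would start from an arbitrary $b_3\in B_3$ and use that $i_4$ is an isomorphism to lift $g_3(b_3)\in B_4$ to some $a_4\in A_4$. The identity $i_5(f_4(a_4)) = g_4(i_4(a_4)) = g_4(g_3(b_3)) = 0$, together with the injectivity of $i_5$, forces $f_4(a_4)=0$, so exactness of the top row gives $a_3\in A_3$ with $f_3(a_3)=a_4$. Then $g_3\bigl(b_3 - i_3(a_3)\bigr) = g_3(b_3) - i_4(f_3(a_3)) = 0$, so exactness of the bottom row produces $b_2\in B_2$ with $g_2(b_2) = b_3 - i_3(a_3)$, whence $\bar g_2([b_2]) = [b_3]$.

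For injectivity I would take a class in $\ker\bar g_2$, represented by $b_2\in B_2$ with $g_2(b_2) = i_3(a_3)$ for some $a_3\in A_3$. Commutativity gives $i_4(f_3(a_3)) = g_3(i_3(a_3)) = g_3(g_2(b_2)) = 0$, so injectivity of $i_4$ yields $f_3(a_3)=0$, and exactness of the top row gives $a_2\in A_2$ with $f_2(a_2)=a_3$. Then $g_2\bigl(b_2 - i_2(a_2)\bigr) = i_3(a_3) - i_3(f_2(a_2)) = 0$, so $b_2 - i_2(a_2) = g_1(b_1)$ for some $b_1\in B_1$. Finally, surjectivity of $i_1$ lets me write $b_1 = i_1(a_1)$, and commutativity gives $b_2 - i_2(a_2) = g_1(i_1(a_1)) = i_2(f_1(a_1))$, so $b_2 = i_2\bigl(a_2 + f_1(a_1)\bigr)\in\im i_2$ and $[b_2]=0$.

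The argument is elementary, and the only real care required is bookkeeping: each of the five hypotheses enters exactly once — $i_4$ isomorphic supplies both the lift in the surjectivity step and the vanishing $f_3(a_3)=0$ in the injectivity step, the monomorphisms $i_5$ and $i_3$ clear the respective error terms, and the epimorphism $i_1$ is used only at the very end of the injectivity chase. I expect no genuine obstacle: this is a purely homological statement (a cokernel analogue of the weak five lemma), so the main task is simply to carry out the chase cleanly and record where each assumption is consumed.
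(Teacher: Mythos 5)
Your proof is correct and is the standard weak-five-lemma-style diagram chase: the paper itself gives no inline proof of this proposition, delegating it to the cited reference \cite[Proposition 3.3]{YZ15}, and your chase (constructing $\bar g_2$ on cokernels, then checking surjectivity via $i_4$ surjective plus $i_5$ mono, and injectivity via $i_4$ mono plus $i_1$ epi) is precisely that standard argument. One bookkeeping remark: your chase never actually uses that $i_2$ and $i_3$ are monomorphisms (a cokernel is the quotient by the image in any case), so your closing claim that each hypothesis enters exactly once --- in particular that injectivity of $i_3$ ``clears an error term'' --- is inaccurate, though this only means you have proved a slightly stronger statement than the one asserted.
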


Based on these,
we have
\begin{equation}\label{pre-blow-up-formula}
H_{\bar{\partial}}^{p, q}(\tilde{X})
\cong H_{\bar{\partial}}^{p, q}(X)
\oplus \Big( H_{\bar{\partial}}^{p, q}(E)/ \pi_{E}^{\ast}H_{\bar{\partial}}^{p, q}(Z)\Big).
\end{equation}
From \eqref{equ3.5} and \eqref{pre-blow-up-formula} it follows that
$$
H_{\bar{\partial}}^{p,q}(\tilde{X})
\cong H_{\bar{\partial}}^{p,q}(X)\oplus \left(\bigoplus_{i=1}^{r-1} \tilde{\bm{t}}^{i}\wedge\pi_{E}^{\ast}H_{\bar{\partial}}^{p-i,q-i}(Z) \right)
\cong  H_{\bar{\partial}}^{p,q}(X)\oplus \left(\bigoplus_{i=1}^{r-1}H_{\bar{\partial}}^{p-i,q-i}(Z) \right).
$$
This completes the proof of Theorem \ref{main-thm}.

%=====================================================
\subsection{Proof of Proposition \ref{tech-prop}}\label{prop3.7}
This proof is based on the recent results of J. Stelzig \cite{Jo18} and some standard homological algebra techniques.

%To prove the lemma we need some preliminary results.
%Recall the commutative blow-up diagram
%\begin{equation}\label{blow-up-diag3}
%\xymatrix{
%E \ar[d]_{\pi_{E}} \ar@{^{(}->}[r]^{\tilde{\imath}} & \tilde{X}\ar[d]^{\pi}\\
% Z \ar@{^{(}->}[r]^{\imath} & X.
%}
%\end{equation}

%Use the previous notations.

It is easy to see that for any $p,q\geq0$ the pullback $\pi^{\ast}:\mathcal{A}^{p,q}(X)\rightarrow\mathcal{A}^{p,q}(\tilde{X})$
is injective.
In fact, for $\alpha\in \mathcal{A}^{p,q}(X)$ with $\pi^{\ast}\alpha=0$, $\alpha|_{X-Z}=0$ since
$\pi_{\tilde{X}- E}:\tilde{X}- E\rightarrow X- Z$
is biholomorphic.
Then the continuity argument by $\mathrm{codim}_{X}\,Z\geq2$ shows that $\alpha=0$. So we get an injective morphism of complexes
$$
\pi^{\ast}:\{\mathcal{A}^{\bullet,\bullet}(X),\bar{\partial}\}
\rightarrow\{\mathcal{A}^{\bullet,\bullet}(\tilde{X}),\bar{\partial}\}.
$$
Let $\mathcal{A}^{\bullet,\bullet}(\tilde{X}|X)
=\mathcal{A}^{\bullet,\bullet}(\tilde{X})/\pi^{\ast}\mathcal{A}^{\bullet,\bullet}(X)$
be the quotient complex.
Then we obtain a short exact sequence of complexes
$$\label{short-exact-1}
\xymatrix{
  0 \ar[r] & \mathcal{A}^{\bullet,\bullet}(X) \ar[r]^{\pi^{\ast}}
  &  \mathcal{A}^{\bullet,\bullet}(\tilde{X})\ar[r]^{\mathrm{pr}\ } & \mathcal{A}^{\bullet,\bullet}(\tilde{X}|X) \ar[r] & 0 }
$$
and thus the long exact sequence of cohomology groups:
$$\label{long-exact-1}
\xymatrix@C=0.5cm{
\cdots\ar[r]^{} & H^{p,q-1}_{\bar{\partial}}(\tilde{X}|X) \ar[r]^{} & H^{p,q}_{\bar{\partial}}(X)  \ar[r]^{\pi^{\ast}}
& H^{p,q}_{\bar{\partial}}(\tilde{X}) \ar[r]^{} & H^{p,q}_{\bar{\partial}}(\tilde{X}|X) \ar[r]^{} & \cdots.}
$$

%We are now in a position to give the proof of Proposition \ref{tech-prop}.
%\begin{proof}[Proof of Proposition \ref{tech-prop}]
Observe that $\pi_{E}:E\rightarrow Z$ is a fibre bundle and then the pullback
$\pi^{\ast}_{E}:\mathcal{A}^{p,q}(Z)\rightarrow\mathcal{A}^{p,q}(E)$ is injective.
Likewise, we have the long exact sequence of cohomology groups
$$\label{long-exact-2}
\xymatrix@C=0.5cm{
\cdots\ar[r]^{} & H^{p,q-1}_{\bar{\partial}}(E|Z) \ar[r]^{} & H^{p,q}_{\bar{\partial}}(Z)  \ar[r]^{\pi^{\ast}_{E}}
& H^{p,q}_{\bar{\partial}}(E) \ar[r]^{} & H^{p,q}_{\bar{\partial}}(E|Z) \ar[r]^{} & \cdots.}
$$
Hence, the blow-up diagram \eqref{blow-up-diag3} induces a commutative diagram
\begin{equation}\label{comm-diagram3.1}
\xymatrix@C=0.5cm{
   \cdots \ar[r]^{} & H^{p,q-1}_{\bar{\partial}}(\tilde{X}|X) \ar[d]_{(\tilde{\imath}|\imath)^{\ast}} \ar[r]^{} &H^{p,q}_{\bar{\partial}}(X) \ar[d]_{\imath^{\ast}} \ar[r]^{\pi^{\ast}} & H^{p,q}_{\bar{\partial}}(\tilde{X})
   \ar[d]_{\tilde{\imath}^{\ast}} \ar[r]^{} & H^{p,q}_{\bar{\partial}}(\tilde{X}|X)\ar[d]_{(\tilde{\imath}|\imath)^{\ast}} \ar[r]^{} & \cdots \\
   \cdots \ar[r] & H^{p,q-1}_{\bar{\partial}}(E|Z) \ar[r]^{} &
  H^{p,q}_{\bar{\partial}}(Z) \ar[r]^{\pi^{\ast}_{E}} &
  H^{p,q}_{\bar{\partial}}(E)\ar[r]^{} &
  H^{p,q}_{\bar{\partial}}(E|Z) \ar[r] & \cdots. }
\end{equation}
As the morphisms $\pi^{\ast}$ and $\pi^{\ast}_{E}$ in \eqref{comm-diagram3.1} are injective, it can split into the commutative diagram of short exact sequences
\begin{equation}\label{comm-diagram3.2}
\xymatrix@C=0.5cm{
   0 \ar[r]^{} &H^{p,q}_{\bar{\partial}}(X) \ar[d]_{\imath^{\ast}} \ar[r]^{\pi^{\ast}} & H^{p,q}_{\bar{\partial}}(\tilde{X})
   \ar[d]_{\tilde{\imath}^{\ast}} \ar[r]^{} & H^{p,q}_{\bar{\partial}}(\tilde{X}|X)
   \ar[d]_{(\tilde{\imath}|\imath)^{\ast}} \ar[r]^{} & 0 \\
   0 \ar[r]^{} &
  H^{p,q}_{\bar{\partial}}(Z) \ar[r]^{\pi^{\ast}_{E}} &
  H^{p,q}_{\bar{\partial}}(E)\ar[r]^{} &
  H^{p,q}_{\bar{\partial}}(E|Z) \ar[r] & 0. }
\end{equation}
According to the Snake Lemma \cite[Page 120]{GM03}, the diagram \eqref{comm-diagram3.2} determines an exact sequence
\begin{equation}\label{ker-coker}
\xymatrix@C=0.5cm{
  0 \ar[r] & \ker\,(\imath^{\ast})
  \ar[r]^{} & \ker\,(\tilde{\imath}^{\ast})
  \ar[r]^{} & \ker\,(\tilde{\imath}|\imath)^{\ast}
  \ar[r]^{} & \mathrm{coker}\,(\imath^{\ast})
  \ar[r]^{} & \mathrm{coker}\,(\tilde{\imath}^{\ast}) \ar[r]^{} & \mathrm{coker}\,(\tilde{\imath}|\imath)^{\ast} \ar[r] & 0 .}
\end{equation}
J. Stelzig proved that the morphism $(\tilde{\imath}|\imath)^{\ast}$ is isomorphic in \cite[Theorem 8]{Jo18}.
So the exactness in \eqref{ker-coker} implies
the isomorphisms
\begin{equation}\label{ker-coker-2}
\ker\,(\imath^{\ast})\cong\ker\,(\tilde{\imath}^{\ast})\,\,\,
\mathrm{and}\,\,\,
\mathrm{coker}\,(\imath^{\ast})
\cong\mathrm{coker}\,(\tilde{\imath}^{\ast}).
\end{equation}

From definition, the relative Dolbeault cohomology groups lie in the following commutative diagram of long exact sequences
\begin{equation}\label{comm-diagram-re-dol}
\xymatrix@C=0.5cm{
   \cdots \ar[r]^{} & H^{p,q-1}_{\bar{\partial}}(X) \ar[d]_{\pi^{\ast}} \ar[r]^{\imath^{\ast}_{q-1}}&
   H^{p,q-1}_{\bar{\partial}}(Z) \ar[d]_{\pi^{\ast}_{E}} \ar[r]^{\delta}
   & H^{p,q}_{\bar{\partial}}(X,Z) \ar[d]_{\pi^{\ast}} \ar[r]^{j_{q}} &H^{p,q}_{\bar{\partial}}(X) \ar[d]_{\pi^{\ast}} \ar[r]^{\imath^{\ast}_{q}} & H^{p,q}_{\bar{\partial}}(Z)\ar[d]_{\pi_{E}^{\ast}} \ar[r]^{} & \cdots \\
   \cdots \ar[r] & H^{p,q-1}_{\bar{\partial}}(\tilde{X}) \ar[r]^{\tilde{\imath}_{q-1}^*} &
   H^{p,q-1}_{\bar{\partial}}(E) \ar[r]^{\tilde{\delta}} &
   H^{p,q}_{\bar{\partial}}(\tilde{X},E) \ar[r]^{\tilde{j}_{q}}&
  H^{p,q}_{\bar{\partial}}(\tilde{X}) \ar[r]^{\tilde{\imath}^{\ast}_{q}} &
  H^{p,q}_{\bar{\partial}}(E)\ar[r]^{}  & \cdots. }
\end{equation}
Using the standard splitting method in homological algebra,
we can split \eqref{comm-diagram-re-dol} to be commutative diagram of short exact sequences
\begin{equation}\label{comm-diagram-re-dol2}
\xymatrix@C=0.5cm{
   0 \ar[r]^{} &\ker\,(j_{q}) \ar[d]_{\pi^{\ast}} \ar[r]^{} & H^{p,q}_{\bar{\partial}}(X,Z)\ar[d]_{\pi^{\ast}} \ar[r]^(0.6){j_{q}} & \mathrm{Im}\,(j_{q})\ar[d]_{\pi^{\ast}} \ar[r]^{} & 0\\
   0 \ar[r]^{} &
  \ker\,(\tilde{j}_{q}) \ar[r]^{} &
  H^{p,q}_{\bar{\partial}}(\tilde{X},E)\ar[r]^(0.6){\tilde{j}_{q}} &
  \mathrm{Im}\,(\tilde{j}_{q}) \ar[r] & 0. }
\end{equation}
From the exactness, we get $\mathrm{Im}\,(j_{q})=\ker\,(\imath^{\ast}_{q})$
and
$\mathrm{Im}\,(\tilde{j}_{q})=\ker\,(\tilde{\imath}^{\ast}_{q})$.
Moreover, the exactness implies the equalities:
$$\ker\,(j_{q})
  = \mathrm{Im}\,(\delta)
  \cong H^{p,q-1}_{\bar{\partial}}(Z)/\ker\,(\delta)
  = H^{p,q-1}_{\bar{\partial}}(Z)/\mathrm{Im}\,(\imath^{\ast}_{q-1})  =\mathrm{coker}\,(\imath^{\ast}_{q-1})
$$
and similarly
$\ker\,(\tilde{j}_{q})=\mathrm{coker}\,(\tilde{\imath}^{\ast}_{q-1}).$
This implies that the diagram \eqref{comm-diagram-re-dol2} is isomorphic to
$$\label{comm-diagram-re-dol3}
\xymatrix@C=0.5cm{
   0 \ar[r]^{} &\mathrm{coker}\,(\imath^{\ast}_{q-1}) \ar[d]_{\pi^{\ast}_{E}} \ar[r]^{} & H^{p,q}_{\bar{\partial}}(X,Z)\ar[d]_{\pi^{\ast}} \ar[r]^(0.6){j_{q}} & \ker\,(\imath^{\ast}_{q})\ar[d]_{\pi^{\ast}} \ar[r]^{} & 0\\
   0 \ar[r]^{} &
  \mathrm{coker}\,(\tilde{\imath}^{\ast}_{q-1}) \ar[r]^{} &
  H^{p,q}_{\bar{\partial}}(\tilde{X},E)\ar[r]^(0.6){\tilde{j}_{q}} &
  \ker\,(\tilde{\imath}^{\ast}_{q}) \ar[r] & 0. }
$$
Due to \eqref{ker-coker-2} we finally obtain that
$\pi^{\ast}:H^{p,q}_{\bar{\partial}}(X,Z)\rightarrow H^{p,q}_{\bar{\partial}}(\tilde{X},E)$ is isomorphic.
%\end{proof}
\begin{rem}
The above proof shows that $H^{p,q}_{\bar{\partial}}(\tilde{X}|X)\cong H^{p,q}_{\bar{\partial}}(E|Z)$ implies $
H^{p,q}_{\bar{\partial}}(X,Z)\cong H^{p,q}_{\bar{\partial}}(\tilde{X},E)
$. Actually, the converse still holds just by the commutative diagram
\begin{equation*}
\xymatrix{
     & &
   0 \ar[d]_{} & 0 \ar[d]_{}  &   &  \\
   \cdots \ar[r]^{} & H^{p,q}_{\bar{\partial}}(X,Z) \ar[d]_{\pi^{\ast}}^\cong \ar[r]^{}&
   H^{p,q}_{\bar{\partial}}(X) \ar[d]_{\pi^{\ast}} \ar[r]^{\imath^{\ast}}
   & H^{p,q}_{\bar{\partial}}(Z) \ar[d]_{\pi^{\ast}_{E}} \ar[r]^{\delta\quad} &H^{p,q+1}_{\bar{\partial}}(X,Z) \ar[d]_{\pi^{\ast}}^\cong \ar[r]^{} & \cdots \\
   \cdots \ar[r] & H^{p,q}_{\bar{\partial}}(\tilde{X},E) \ar[r]^{} &
   H^{p,q}_{\bar{\partial}}(\tilde{X})\ar[d] \ar[r]^{\tilde{\imath}^{\ast}} &
   H^{p,q}_{\bar{\partial}}(E)\ar[d] \ar[r]^{\tilde{\delta}\quad}&
  H^{p,q+1}_{\bar{\partial}}(\tilde{X},E) \ar[r]^{} &\cdots\\
  & &H^{p,q}_{\bar{\partial}}(\tilde{X}|X)\ar[d]\ar[r]
  &  H^{p,q}_{\bar{\partial}}(E|Z)\ar[d] &\\
  & &0 &0 & }
\end{equation*}
and Proposition \ref{cok-is}.
\end{rem}

\begin{rem}\label{meng-rk}
It is interesting to prove Proposition \ref{tech-prop} directly by the isomorphisms $(i)-(iii)$ in the proof of \cite[Theorem $8$]{Jo18}. This is completed by L. Meng in his updated version of \cite{meng1} immediately after we sent the updating for \cite{ryy}$_{v3}$ with this suggestion. Now we state this by use of our notations in \cite{ryy}$_{v3}$. Let $\mathscr{A}_{X}^{p,q}$ be the sheaf of differential $(p,q)$-forms on $X$ and similarly for $\mathscr{A}_{Z}^{p,q}$.
Set $\mathscr{E}^{p,q}_{X}=\ker\,(\varphi)$ of the surjective sheaf morphism $\varphi:\mathscr{A}^{p,q}_{X}\rightarrow \imath_{*}\mathscr{A}^{p,q}_{Z}$ in \cite[Lemma $3.9$]{ryy}$_{v3}$, and $\mathscr{F}^{p}_{X}=\ker\,(\bar{\partial}:\mathscr{E}^{p,0}_{X}\rightarrow \mathscr{E}^{p,1}_{X})$. One can define $\mathscr{F}^{p}_{\tilde X}$ similarly. By the isomorphisms $(i)-(iii)$ in the proof of \cite[Theorem $8$]{Jo18} and the long exact sequence of direct image sheaves,
Meng proved the equalities for the direct image sheaves
\begin{equation}\label{meng}
R^q\pi_*\mathscr{F}^{p}_{\tilde X}=
\begin{cases}
 \mathscr{F}^{p}_{X},\ &\text{$q=0$,} \\
 0,\ &\text{$q\geq 1$},
\end{cases}
\end{equation}
where the first equality was first given in \cite[Lemma $3.10$]{ryy}$_{v3}$. Then by \eqref{meng} and Leray spectral sequence, one completes the proof of
$$
H^{q}(X,\mathscr{F}^{p}_{X})\cong
H^{q}(X,\pi_{*}\mathscr{F}^{p}_{\tilde{X}})\cong
H^{q}(\tilde{X},\mathscr{F}^{p}_{\tilde{X}}),
$$
that is exactly the isomorphism \cite[$(3.12)$]{ryy}$_{v3}$ of cohomologies for the $\Gamma$-acyclic resolutions of the sheaves $\mathscr{F}^{p}_{X}, \mathscr{F}^{p}_{\tilde{X}}$. This immediately yields Proposition \ref{tech-prop}.
\end{rem}

%=========================================================================

\section{Applications of Main Theorem \ref{main-thm}}\label{amt}
We will present the proofs of the direct Corollaries \ref{app1}-\ref{hochschild} from Theorem \ref{main-thm} on bimeromorphic geometry of compact complex manifolds.

One starts this section with several basic notions in bimeromorphic geometry.
A nice reference of bimeromorphic geometry is \cite[$\S$ 2]{u}.
The first one is the proper modification.
\begin{defn}
A morphism $\pi: \tilde{X}\rightarrow X$ of two equidimensional complex spaces is called a \emph{proper modification}, if it satisfies:
\begin{enumerate}
  \item [$(i)$] $f$ is proper and surjective;
  \item [$(ii)$] there exist nowhere dense analytic subsets $\tilde E\subseteq \tilde{X}$ and $E \subseteq X$ such that
                  $$
                  \pi:\tilde{X}-\tilde E\rightarrow X-E
                  $$
                  is a biholomorphism, where $\tilde E:=\pi^{-1}(E)$ is called the \emph{exceptional space of the modification}.
\end{enumerate}
If $\tilde X$ and $X$ are compact, a proper modification $\pi: \tilde{X}\rightarrow X$ is often called simply a \emph{modification}.
\end{defn}

More generally, we have the following definition.
\begin{defn}\label{bimero}
Let $X$ and $Y$ be two complex spaces.
A map $\varphi$ of $X$ into the power set of $Y$ is a \emph{meromorphic map} of $X$ into $Y$,
denoted by $\varphi: X\dashrightarrow Y$, if $X$ satisfies the following conditions:
\begin{enumerate}
  \item [$(i)$] The graph $G_{\varphi}=\{(x,y)\in X\times Y\ |\ y\in \varphi(x)\}$ of $\varphi$ is an irreducible analytic subset in $X\times Y$;
  \item [$(ii)$] The projection map $P_X:G_{\varphi}\rightarrow X$ is a proper modification.
\end{enumerate}

A meromorphic map $\varphi: X\dashrightarrow Y$ of complex varieties is called a \emph{bimeromorphic map} if $P_Y:G_{\varphi}\rightarrow Y$ is also a proper modification.

If $\varphi$ is a bimeromorphic map, the analytic set
$$
\{(y,x)\in Y\times X\ |\ (x,y)\in G_\varphi\}\subseteq Y\times X
$$
defines a meromorphic map $\varphi^{-1}:Y\dashrightarrow X$ such that $\varphi\circ\varphi^{-1}=id_Y$ and $\varphi^{-1}\circ\varphi=id_X$.

Two complex varieties $X$ and $Y$ are called \emph{bimeromorphically equivalent} (or \emph{bimeromorphic}) if there exists a bimeromorphic map $\varphi: X\dashrightarrow Y$.
\end{defn}

\begin{proof}[Proof of Corollary \ref{app1}]
According to weak factorization Theorem \ref{wft}, it suffices to prove it under blow-ups.
Without loss of generality, we assume that
$$
\pi: \tilde{X} \rightarrow X
$$
is a blow-up of $X$ along a closed complex submanifold $Z\subseteq X$ of codimension $r\geq2$.
Then by Theorem \ref{main-thm}, we have
$$
H^{p,q}_{\bar{\partial}}(\tilde{X}) \cong  H^{p,q}_{\bar{\partial}}(X)\oplus \Big(\bigoplus_{k=1}^{r-1} H^{p-k,q-k}_{\bar{\partial}}(Z) \Big).
$$
In the above formula, if $p=0$ or $q=0$, then
$$
\bigoplus_{k=1}^{r-1} H^{p-k,q-k}_{\bar{\partial}}(Z)=0;
$$
otherwise, it will not be zero in general.
As a consequence, the Dolbeault cohomology isomorphisms \eqref{iso-p00q'} hold.
\end{proof}

\begin{ex}
Hodge numbers of general types are not necessarily bimeromorphic invariants.
Here is a canonical example.
Let $X$ be a projective manifold of dimension $n$.
Choose a point $x\in X$ and denote by $Bl_xX$ the blow up of $X$ at $x$.
This is a projective manifold with a holomorphic  map $\pi:Bl_xX\rightarrow X$,
which is a biholomorphism over $X-\{x\}$ such that $\pi^{-1}(x)\cong \mathbb{CP}^{n-1}$.
Then a classical calculation shows that the Hodge numbers of $Bl_xX$ are given by
\begin{align*}
h^{p,q}(Bl_xX)
 &=h^{p,q}(\mathbb{CP}^{n-1})+h^{p,q}(X-\{x\})\\
 &=h^{p,q}(\mathbb{CP}^{n-1})+h^{p,q}(X)-h^{p,q}(\{x\})\\
 &=\begin{cases}
h^{p,q}(X)+1,\ &\text{if $p=q>0$}; \\
h^{p,q}(X), &\text{otherwise}.
\end{cases}
\end{align*}
One can also use our Main Theorem \ref{main-thm} to complete this simple calculation.
\end{ex}

\begin{proof}[Proof of Corollary \ref{enbb}]
As a direct corollary of Theorem \ref{main-thm},
the blow-up $\pi:\tilde{Y}\rightarrow Y$ of a compact complex manifold $Y$ with a smooth center $Z$ satisfies that
\begin{equation*}
H^{1,1}_{\bar{\partial}}(\tilde{Y}) \cong  H^{1,1}_{\bar{\partial}}(Y)\oplus H^{0,0}_{\bar{\partial}}(Z)
\end{equation*}
and thus
$$
h^{1,1}(\tilde{Y})= h^{1,1}(Y)+1.
$$
So we obtain the equalities
$$
h^{1,1}(X_{i-1})=
\begin{cases}
h^{1,1}(X_{i})+1,\ &\text{$X_{i-1}$ is a blow-up of $X_{i}$ with a smooth center;}\\
h^{1,1}(X_{i})-1,\ &\text{$X_{i-1}$ is a blow-down of $X_{i}$ with a smooth center},
\end{cases}
$$
which imply that
$$
h^{1,1}(\tilde{X})= h^{1,1}(X)+\sharp\text{\{blow-ups in \eqref{wf}\}}-\sharp\text{\{blow-downs in \eqref{wf}\}}.
$$
\end{proof}

\begin{proof}[Proof of Theorem \ref{bsf}]
Recall that a quick definition by Poincar\'{e} and Serre dualities for \emph{degeneracy of the Fr\"olicher spectral sequence at $E_1$} on an $n$-dimensional compact complex manifold $M$ is
$$
b_k(M)=\sum_{p+q=k}h^{p,q}(M),\ \text{for each nonnegative integer $k\leq n$},
$$
where $b_k(M)$ is the $k$-th Betti number of $M$.

As a direct application of Theorem \ref{main-thm} and the formula \eqref{1}, there hold the equalities
\begin{equation}\label{buhn}
h^{p,q}(\tilde{X})= h^{p,q}(X)+\sum_{l=1}^{r-1}h^{p-l,q-l}(Z),
\end{equation}
\begin{equation}\label{bubn}
b_k(\tilde{X})= b_k(X)+\sum_{l=1}^{r-1}b_{k-2l}(Z).
\end{equation}
As usual, we assume $r\geq 2$.
Combining \eqref{buhn} with \eqref{bubn}, one has
\begin{equation}\label{bhbhz}
b_k(\tilde{X})-\sum_{p+q=k}h^{p,q}(\tilde{X})=\left(b_k(X)-\sum_{p+q=k}h^{p,q}(X)\right)+\sum_{l=1}^{r-1}\left(b_{k-2l}(Z)-\sum_{p+q=k}h^{p-l,q-l}(Z)\right).
\end{equation}
In particular, for $k=1,\cdots,4$,
\begin{equation}\label{14}
\begin{cases}
b_1(\tilde{X})-h^{1,0}(\tilde{X})-h^{0,1}(\tilde{X})=b_1(X)-h^{1,0}(X)-h^{0,1}(X),\\
b_2(\tilde{X})-h^{2,0}(\tilde{X})-h^{1,1}(\tilde{X})-h^{0,2}(\tilde{X})=b_2(X)-h^{2,0}(X)-h^{1,1}(X)-h^{0,2}(X),\\
b_3(\tilde{X})-\underset{p+q=3}{\sum}h^{p,q}(\tilde{X})=\left(b_3(X)-\underset{p+q=3}{\sum}h^{p,q}(X)\right)+\left(b_1(Z)-h^{1,0}(Z)-h^{0,1}(Z)\right),\\
b_4(\tilde{X})-\underset{p+q=4}{\sum}h^{p,q}(\tilde{X})=\left(b_4(X)-\underset{p+q=4}{\sum}h^{p,q}(X)\right)+\overset{r-1}{\underset{l=1}{\sum}} \left(b_{4-2l}(Z)-\underset{p+q=4}{\sum}h^{p-l,q-l}(Z)\right).
\end{cases}
\end{equation}

Now we assume that the Fr\"olicher spectral sequence of $\tilde{X}$ degenerates at $E_1$ and prove the first assertion.
Apply the useful Fr\"olicher inequality for an $n$-dimensional compact complex manifold
$$
b_k(X)\leq \sum_{p+q=k}h^{p,q}(X),\ k=0,\cdots,n,
$$
to obtain, for $k=1,\cdots,n,$
$$
b_k(X)-\sum_{p+q=k}h^{p,q}(X),\ b_{k-2l}(Z)-\sum_{p+q=k}h^{p-l,q-l}(Z)\leq 0,\ l=1,\cdots,r-1.
$$
By \eqref{bhbhz},
$$
0=\left(b_k(X)-\sum_{p+q=k}h^{p,q}(X)\right)+\sum_{l=1}^{r-1}\left(b_{k-2l}(Z)-\sum_{p+q=k}h^{p-l,q-l}(Z)\right)\leq 0
$$
implies
$$
b_k(X)-\sum_{p+q=k}h^{p,q}(X)=b_{k-2l}(Z)-\sum_{p+q=k}h^{p-l,q-l}(Z)=0,\ k=1,\cdots,n,\ l=1,\cdots,r-1.
$$
Thus, the Fr\"olicher spectral sequences of both ${X}$ and $Z$ degenerate at $E_1$.
The converse is similar.

Next one proceeds to the second assertion.
Using the weak factorization Theorem \ref{wft}, one reduces the argument to each blow-up.
By \eqref{bhbhz} and \eqref{14}, we just need the standard results on the degeneracy of the Fr\"olicher spectral sequences for any point, curve and surface at $E_1$ (cf. \cite[Theorem IV.2.8]{BHPV}).
\end{proof}

\begin{rem}\label{b2h11}
From the first two equalities of \eqref{14}, it follows that the quantities $b_2(M)-h^{1,1}(M)$ and $b_1(M)$ are bimeromorphic invariants of a compact complex manifold $M$.
Nevertheless, analogously to the proof of Corollary \ref{enbb}, one obtains
$$
b_2(\tilde{X})-b_2(X)=\sharp\text{\{blow-ups in \eqref{wf}\}}-\sharp\text{\{blow-downs in \eqref{wf}\}}=h^{1,1}(\tilde{X})-h^{1,1}(X).
$$
\end{rem}
\begin{rem}
It is easy to see from \eqref{bhbhz} that the second assertion holds for any dimension if all compact complex submanifolds of codimensions at least two in a compact complex manifold with the degeneracy of the Fr\"olicher spectral sequences at $E_1$ still admit this degeneracy.
Compare also the argument in \cite[Theorem 2.1, Question 2.4 and Remark 2.6]{astt}
for Question \ref{quest}.
\end{rem}

\begin{rem}
It is interesting to construct a compact complex manifold such that its Fr\"olicher spectral sequence degenerates at $E_1$ and the Hodge symmetry $H^{p,q}(-)\cong H^{q,p}(-)$ {for all possible $p, q$} holds on it,
but it does not satisfy the $\partial\bar{\partial}$-lemma, as provided recently in \cite[Proposition $4.3$]{couv}.
From our blow-up formula for Dolbeault cohomologies, we notice that the Hodge symmetry is a bimeromorphic property for compact complex threefolds,
while fortunately, the $\partial\bar{\partial}$-lemma is also a bimeromorphic property for compact complex threefolds.

In this way, by bimeromorphic transformations, we can construct many more examples of compact non-$\partial\bar{\partial}$-threefolds with the degeneracy of Fr\"olicher spectral sequences at $E_1$ and Hodge symmetry from the known ones, such as the one in \cite[Proposition $4.3$]{couv}.
\end{rem}

\begin{proof}[Proof of Corollary \ref{hochschild}]
This is a direct consequence of Theorem \ref{main-thm} and Hochschild-Kostant-Rosenberg (HKR) theorem for complex manifolds (cf. \cite[Corollary 4.2)]{Cal05}).
In fact,
we have the following isomorphisms:
\begin{eqnarray*}
\mathrm{HH}_{k}(\tilde{X})
&\cong& \bigoplus_{p-q=k} H^{q}(\tilde{X}, \Omega_{\tilde{X}}^p) \;\;(\textrm{HKR theorem for $\tilde{X}$})\\
&\cong &\bigoplus_{p-q=k}  \Big( H^{p,q}_{\bar{\partial}}(X)\oplus \bigoplus_{i=1}^{r-1} H^{p-i,q-i}_{\bar{\partial}}(Z) \Big) \;\; (\textrm{by Theorem \ref{main-thm}}) \\
&\cong & \bigoplus_{p-q=k} H^{q}(X, \Omega_{X}^p) \oplus
\bigoplus_{i=1}^{r-1} \Big( \bigoplus_{p-q=k} H^{q-i}(Z, \Omega_{Z}^{p-i}) \Big)\\
&\cong & \bigoplus_{p-q=k} H^{q}(X, \Omega_{X}^p) \oplus
\Big( H^{0}(Z, \Omega_{Z}^{k})\oplus\cdots\oplus H^{n-k}(Z, \Omega_{Z}^{n})\Big)^{\oplus (r-1)}\\
&\cong & \mathrm{HH}_{k}(X) \oplus  \Big( \mathrm{HH}_{k}(Z) \Big)^{\oplus (r-1)}\ \;\;(\textrm{by HKR theorem for $X$ and $Z$}),
\end{eqnarray*}
for any $-n\leq k\leq n$.
\end{proof}

\begin{rem}
As we know the Hochschild homologies are important invariants of compact complex manifolds.
For instance, the Hochschild homology is a derived invariant,
that is, for two compact complex manifolds (or in particular smooth projective varieties) $X$ and $Y$,
if the derived category $\mathcal{D}_{coh}^b(X)$
\footnote{For a compact complex manifold $X$, $\mathcal{D}_{coh}^b(X)$ denotes the bounded derived category of $\mathcal{O}_X$-modules with coherent cohomology.}
is equivalent to $\mathcal{D}_{coh}^b(Y)$ as triangulated categories, then the Hochschild homologies of $X$ and $Y$ are isomorphic;
see for example \cite{Cal05} and references therein.
In \cite[Theorem 4.3]{Orlov93}, Orlov obtained the blow-up formula for derived categories of smooth projective varieties.
Furthermore, if one is able to obtain that for compact complex manifolds,
then one can also get Corollary \ref{hochschild}, which is believed to be known for experts.
\end{rem}
%=======================================================

\section{Examples of Corollary \ref{enbb}}\label{ex}
In this section, we list several examples with equal $(1,1)$-Hodge number or equivalently second Betti number for Corollary \ref{enbb}.
They are believed to be of independent interest for further study since they much concern about the relationship between Hodge structure and bimeromorphic geometry.
\begin{thm}\label{exthm}
Let $\pi:\tilde{X}\dashrightarrow X$ be a bimeromorphic map between two compact complex manifolds.
Then the numbers of blow-ups and blow-downs in the weak factorization \eqref{wf} are equal if the complex manifolds belong to one of the following:
\begin{enumerate}
    \item[$(i)$]\label{i}
           Both $\tilde{X}$ and $X$ are surfaces with nef canonical bundles;
    \item [$(ii)$]\label{ii}
           $\tilde{X}$ and $X$ are two bimeromorphic K\"ahler minimal models of dimension three with nonnegative Kodaira dimension except two;
    \item [$(iii)$]\label{iii}
           $\tilde{X}$ and $X$ are two bimeromorphic minimal models, since they are \emph{isomorphic in codimension one}, i.e., there exist closed subsets $\tilde B\subseteq \tilde{X}$ and $B\subseteq X$ of codimension at least two such that $\pi$ induces an isomorphism
           $$\tilde{X}-\tilde B\stackrel{\pi}{\simeq}X-B.$$
\end{enumerate}
\end{thm}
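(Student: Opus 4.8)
The plan is to reduce all three cases to the single statement $b_2(\tilde X) = b_2(X)$ and then verify it case by case. By Corollary \ref{enbb} the equality of the numbers of blow-ups and blow-downs in \eqref{wf} is equivalent to $h^{1,1}(\tilde X) = h^{1,1}(X)$; moreover, Remark \ref{b2h11} records that $b_2(\tilde X) - b_2(X) = h^{1,1}(\tilde X) - h^{1,1}(X)$, so this is in turn equivalent to the equality of second Betti numbers $b_2(\tilde X) = b_2(X)$. Hence it suffices to establish $b_2(\tilde X) = b_2(X)$ in each of the three situations.

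First I would treat case $(iii)$, which is the cohomological core and which I would then reuse for case $(ii)$. Here $\pi$ restricts to a biholomorphism $\tilde X - \tilde B \xrightarrow{\sim} X - B$ with $\tilde B, B$ analytic of complex codimension at least two, hence of real codimension at least four. For a closed analytic subset $A$ of a complex manifold of complex codimension $\geq 2$ one has the local-cohomology vanishing $H^k_A(-;\mathbb{C}) = 0$ for $k \leq 3$, so the long exact sequence of the pair yields isomorphisms $H^2(\tilde X;\mathbb{C}) \cong H^2(\tilde X - \tilde B;\mathbb{C})$ and $H^2(X;\mathbb{C}) \cong H^2(X - B;\mathbb{C})$. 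Composing with the biholomorphism on the complements gives $b_2(\tilde X) = b_2(X)$.

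Next I would dispatch case $(i)$ using surface theory: abundance for surfaces shows that a compact complex surface with nef canonical bundle is minimal with $\kappa \geq 0$, and the minimal model in a bimeromorphic class of such surfaces is unique up to biholomorphism, so that $\tilde X \cong X$ and $b_2(\tilde X) = b_2(X)$ trivially. For case $(ii)$ I would invoke the Kähler minimal model program in dimension three: two bimeromorphic Kähler minimal threefolds are connected by a finite sequence of flops and hence are isomorphic in codimension one, so case $(iii)$ applies and yields $b_2(\tilde X) = b_2(X)$. The sharper statement that \emph{all} Hodge numbers coincide (Proposition \ref{0lin3}), which explains the role of this example, is proved separately through the algebraic approximation theorems of Graf \cite{g} and Lin \cite{lin1,lin2}, deforming the Kähler threefolds to projective ones along which Hodge numbers are invariant; this is exactly where the restriction ``except two'' on the Kodaira dimension enters.

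The hard part will not be the elementary cohomological bookkeeping but the external geometric inputs. In case $(iii)$ one must ensure that the exceptional loci $\tilde B, B$ are genuinely analytic of complex codimension at least two, so that the local-cohomology vanishing applies even when these sets are singular. In case $(ii)$ the real weight rests on the existence of flops connecting Kähler minimal threefolds (and hence on the resulting codimension-one isomorphism) and, for the full Hodge-number conclusion, on the algebraic approximation results, which are only available under the stated Kodaira-dimension hypothesis. These are the steps I expect to require the most care.
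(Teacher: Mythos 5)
Your proposal is correct, and its skeleton --- reducing all three cases to $b_2(\tilde X)=b_2(X)$ via Corollary \ref{enbb} and Remark \ref{b2h11} --- is exactly the paper's; but two of the three cases are then handled by genuinely different means. In case $(iii)$, where the paper (Proposition \ref{pln}, following Popa) argues via Poincar\'e duality that the diffeomorphism off real-codimension-four subsets ``sees all $(2n-k)$-cycles'' for $k\le 2$, you instead use the local-cohomology vanishing $H^{k}_{A}(-;\mathbb{C})=0$ for $k\le 3$ when $A$ is analytic of complex codimension at least two; this is equivalent content (it is dual to the Borel--Moore vanishing underlying the paper's cycle argument) but stated in a more rigorous form. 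Note, however, that the paper does not take the codimension-one isomorphism as a hypothesis: it proves it for arbitrary bimeromorphic minimal models (Theorem \ref{iic1}) via Koll\'ar's negativity lemma. The wording of the statement permits your reading, but if the theorem is to apply to all bimeromorphic minimal models, that input --- which you only flag as ``needing care'' --- must actually be supplied. In case $(ii)$ your route is more economical than the paper's: you reduce to case $(iii)$ through the fact that bimeromorphic K\"ahler minimal threefolds are connected by flops (equivalently, one can just note that smooth K\"ahler minimal models are minimal models in the sense of Definition \ref{mm}, so Theorem \ref{iic1} applies); this suffices for the counting statement and, as you correctly observe, does not use the restriction on the Kodaira dimension. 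The paper instead proves the much stronger Proposition \ref{lin3} --- equality of \emph{all} Hodge numbers --- via the Graf--Lin algebraic approximation, the Koll\'ar--Mori deformation of flops, Kodaira--Spencer stability and Theorem \ref{kon}, which is precisely where ``except two'' enters; so your approach buys brevity, the paper's buys a stronger conclusion. Case $(i)$ is the same classical fact in both treatments (bimeromorphic surfaces with nef canonical bundles are isomorphic, \cite[Claim on P. 99]{BHPV}); your detour through abundance is unnecessary and slightly delicate for non-K\"ahler surfaces --- nefness of $K$ already excludes $(-1)$-curves and ruledness, and uniqueness of the minimal model then finishes directly.
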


We first recall several notions in minimal model program.
Let $M$ be a normal variety.
We say that a normal variety $M$ is \emph{$\mathbb{Q}$-factorial} if for every Weil divisor $D$ there exists an integer $m\in \mathbb{N}$ such that $\mathcal{O}_{M}(mD)$ is a locally free sheaf, i.e.,
$mD$ is a Cartier divisor and in addition that there is some number $m\in \mathbb{N}$
such that the coherent sheaf $(K_M^{\otimes m})^{**}=(\omega_M^{\otimes m})^{**}$
on the canonical sheaf $K_M=\omega_M$ is locally free.
Then we write
$$
mK_M=(K_M^{\otimes m})^{**}.
$$
A normal variety $M$ has \emph{terminal singularities} if
\begin{enumerate}
    \item[$(i)$]\label{i}
              there is a positive integer $k$ such that $kK_M$ is a Cartier divisor;
    \item [$(ii)$]\label{ii}
              for some desingularization $f: \tilde{M}\rightarrow M$,
              any $k$-canonical form on $M_{reg}$ extends a $k$-canonical form on $\tilde{M}$ vanishing along every exceptional divisor of $\tilde{M}$, or equivalently,
              $F-E(f)$ is effective if we write
              $$kK_{\tilde M}\equiv f^*(kK_M)+F$$
              and $E(f)$ denotes the union of all reduced $f$-exceptional hypersurfaces in $\tilde M$.
 \end{enumerate}
Notice that the property of terminal singularities does not depend on the choice of desingularization,
and a smooth variety has terminal singularities.
Now let us recall the definition of nefness \cite[Definition $3$]{paun}.
Let $[\alpha]\in H^{1,1}_{BC}(M)$ be a class represented by a form $\alpha$ with local potentials.
Then $[\alpha]$ is called \emph{nef} if for some positive $(1,1)$-form $\omega$ on $M$ and every $\epsilon> 0$,
there is some smooth function $\beta_{\epsilon}$ on $M$ such that
$$
\alpha+\sqrt{-1}\partial\bar\partial \beta_{\epsilon}\geq -\epsilon\omega.
$$
A divisor $D$ on a Moishezon variety $M$ is called \emph{algebraically nef} if $D\cdot C \geq 0$ for all curves $C$ in $M$.
These two definitions coincide in this case by \cite[Corollaire on P. 412]{paun}.
A \emph{Moishezon variety} is a compact complex variety with the algebraic dimension equal to its dimension,
or equivalently it is bimeromorphically equivalent to a projective variety.

\begin{defn}\label{mm}
A \emph{minimal model} is a normal $\mathbb{Q}$-factorial variety $M$ with nef canonical divisor $K_M$ and at most terminal singularities.
\end{defn}

Let us return to Theorem \ref{exthm}.
The first item follows from a classical result in compact complex surface theory that all bimeromorphic surfaces with nef canonical bundles are isomorphic (cf. \cite[Claim on P. 99]{BHPV}).

By the remarkable work \cite{be}, Calabi-Yau manifolds, hyperk\"ahler manifolds and complex tori
form the building blocks of compact K\"ahler manifolds with vanishing first Chern classes.
Recall that a compact complex manifold $M$ is called \emph{weakly Calabi-Yau} if its canonical bundle $K_M\cong \mathcal{O}_M$.
For the full \emph{Calabi-Yau} condition one usually also requires that $M$ be simply connected and $h^q(M, \mathcal{O}_M) = 0$ for $0 < q < \dim M$, and then it is projective by Kodaira's criterion.
A complex manifold $M$ is called \emph{symplectic} (here) if there exists a holomorphic two-form $\omega$ which is non-degenerate at every point. Note that the existence of $\omega$ implies that the canonical bundle is trivial.
If $M$ is compact, then the symplectic structure is unique if and only if $h^0(M,\Omega_M^2)=1$.
By definition, a simply connected compact K\"ahler manifold with a unique symplectic structure is \emph{irreducible symplectic}.

As shown in \cite[\S 2.2]{h}, two bimeromorphic compact symplectic manifolds with unique symplectic structures are isomorphic in codimension one.
In particular, a famous theorem \cite[Corollary $4.7$]{hy} of Huybrechts implies that two birational projective irreducible
symplectic manifolds  have the same Betti numbers and Hodge numbers.
Clearly, the same holds true for complex tori.

Moreover, one has the following remarkable theorem (also cf. \cite[Corollary $4.12$]{ko} for threefolds).
\begin{thm}\label{kon}
\begin{enumerate}[$(i)$]
    \item  {{$($\emph{\cite{k,ba,w0}}$)$}\label{stab-b}
           Any two birational weakly  Calabi-Yau or more generally projective manifolds with canonical bundles nef along the exceptional loci have the same Betti numbers};
    \item  {{$($\emph{\cite{k,i,w}}}$)$}\label{stab-h}
           Any two birational smooth projective minimal models have  the same  Hodge numbers.
\end{enumerate}
\end{thm}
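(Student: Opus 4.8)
The plan is to deduce both statements from the single notion of \emph{$K$-equivalence} and then to invoke integration techniques: $p$-adic point counting for the Betti numbers in $(i)$, and motivic integration for the Hodge numbers in $(ii)$. Recall that two smooth projective varieties $X$ and $X'$ are $K$-equivalent if there is a smooth variety $W$ with birational morphisms $p\colon W\to X$ and $q\colon W\to X'$ satisfying $p^{*}K_{X}=q^{*}K_{X'}$. First I would establish the relevant comparison of canonical classes in both settings on a common resolution $W$. In case $(i)$, when $K_{X}\cong K_{X'}\cong\mathcal{O}$ one has $p^{*}K_{X}=0=q^{*}K_{X'}$ trivially, and more generally the nefness of the canonical bundles along the exceptional loci yields exactly the comparison of discrepancy divisors needed to equate $p$-adic volumes. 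In case $(ii)$, since $X$ and $X'$ are minimal (hence $K_{X},K_{X'}$ nef) and birational, they are isomorphic in codimension one, and applying the negativity lemma to the difference $p^{*}K_{X}-q^{*}K_{X'}$, which is supported on the exceptional loci and numerically trivial on the fibers of both $p$ and $q$, forces this difference to vanish. Thus both cases reduce to the assertion that \emph{$K$-equivalent smooth projective varieties share the invariant in question}.

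For part $(i)$ I would follow Batyrev's arithmetic argument. Spread $X$ and $X'$ out to smooth proper schemes over the ring of integers $\mathcal{O}_{k}$ of a number field, so that the birational picture and the canonical comparison persist over a suitable open subscheme. For all but finitely many primes $\mathfrak{p}$ with residue field $\mathbb{F}_{q}$, the change-of-variables formula for $p$-adic integration together with the equality of pullback canonical classes shows that the $p$-adic volumes attached to $X$ and $X'$ coincide; by the Weil--Igusa relation between $p$-adic volume and point counts this gives $\#X(\mathbb{F}_{q^{m}})=\#X'(\mathbb{F}_{q^{m}})$ for all $m$. Deligne's purity in the Weil conjectures then identifies these point counts, through the zeta function, with the Betti numbers, yielding $b_{k}(X)=b_{k}(X')$ for every $k$.

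For part $(ii)$ I would instead use motivic integration in the sense of Kontsevich and Denef--Loeser. The motivic integral over the arc space, recording the class of $X$ in a suitable Grothendieck ring of varieties, is invariant under $K$-equivalence via the motivic change-of-variables formula applied on $W$, where the ramification divisors $K_{W}-p^{*}K_{X}$ and $K_{W}-q^{*}K_{X'}$ enter symmetrically. Taking the Hodge realization, the associated $E$-polynomial $E(X;u,v)=\sum_{p,q}(-1)^{p+q}h^{p,q}(X)\,u^{p}v^{q}$ becomes a $K$-equivalence invariant; since $X$ and $X'$ are smooth and projective, this polynomial together with its Hodge grading recovers the individual Hodge numbers, so $h^{p,q}(X)=h^{p,q}(X')$ for all $p,q$.

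The main obstacle is the change-of-variables formula itself --- in both its $p$-adic and its motivic incarnations --- since this is the device that actually converts the comparison of canonical classes $p^{*}K_{X}=q^{*}K_{X'}$ into an equality of integrals, and hence of cohomological invariants. Making it rigorous requires a careful treatment of the relative canonical (ramification) divisors of $p$ and $q$ and of the measure on the arc or $p$-adic space; once this formula is in hand, the two reductions above are comparatively formal. A secondary point demanding care in case $(ii)$ is the passage from the $E$-polynomial back to the separate Hodge numbers, which relies essentially on the smoothness and properness of $X$ and $X'$.
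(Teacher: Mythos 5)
Your proposal is correct and follows essentially the same route as the sources this theorem is quoted from: the paper itself gives no proof but cites Batyrev and Wang for part $(i)$ (reduction to $K$-equivalence via the negativity lemma, then $p$-adic integration, Weil--Igusa point counting, and Deligne's purity) and Kontsevich, Ito and Wang for part $(ii)$ (motivic integration and the $E$-polynomial, or $p$-adic Hodge theory), which is exactly the two-step argument you outline. Nothing in your reduction or in the integration steps conflicts with those references, so there is no gap to report.
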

Recall that the \emph{exceptional locus} of a bimeromorphic map $f: Y\dashrightarrow Z$ is the points of $Y$ where $f$ is not a local isomorphism.  It is an open problem whether the analogue of \eqref{stab-b} is true for compact K\"ahler manifolds.
%======================================================

\subsection{Bimeromorphic minimal models for K\"ahler threefolds}
In this subsection, we will study the second item of Theorem \ref{exthm}.

\begin{prop}[{\cite[Corollary 7.3]{lin1}}]\label{lin3}
Let ${X}$ and $\tilde X$ be two bimeromorphic smooth K\"ahler minimal models of dimension three with nonnegative Kodaira dimension except two.
Then they have the same Hodge numbers.
\end{prop}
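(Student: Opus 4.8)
The plan is to reduce the assertion to just two Hodge numbers and then attack each with a different input. First I would invoke Corollary \ref{app1} together with the Hodge symmetry $h^{p,q}=h^{q,p}$ and Serre duality $h^{p,q}=h^{3-p,3-q}$, both valid since $X$ and $\tilde X$ are K\"ahler threefolds. Corollary \ref{app1} already gives $h^{p,0}(\tilde X)=h^{p,0}(X)$ and $h^{0,q}(\tilde X)=h^{0,q}(X)$ for all $p,q$, and the two symmetries propagate these equalities to every Hodge number of a threefold except the two genuinely independent ones, $h^{1,1}$ and $h^{2,1}$ (here $h^{2,2}=h^{1,1}$ and $h^{1,2}=h^{2,1}$). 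Thus it suffices to establish $h^{1,1}(\tilde X)=h^{1,1}(X)$ and $h^{2,1}(\tilde X)=h^{2,1}(X)$.

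For $h^{1,1}$ I would use that two bimeromorphic minimal models are isomorphic in codimension one, as recorded in Theorem \ref{exthm}$(iii)$. The underlying mechanism is that deleting a closed analytic subset of complex codimension $\geq 2$ from a complex manifold leaves $H^{2}_{dR}(-;\mathbb{C})$ unchanged, by the support long exact sequence and the vanishing of local cohomology in low degree; hence $b_2(\tilde X)=b_2(X)$. Equivalently, in the weak factorization \eqref{wf} the numbers of blow-ups and blow-downs coincide, and Corollary \ref{enbb} then yields $h^{1,1}(\tilde X)=h^{1,1}(X)$.

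The remaining and genuinely hard invariant is $h^{2,1}$, or equivalently $b_3=2h^{3,0}+2h^{2,1}$, since $h^{3,0}$ is already bimeromorphically invariant by Corollary \ref{app1}; note that the excision argument above fails precisely for $H^3$. Here I would import algebraic approximation for K\"ahler threefolds from the works of Graf \cite{g} and Lin \cite{lin1,lin2}. The key step is to produce a \emph{flop-compatible} simultaneous approximation: deformations $\mathcal{X}\to\Delta$ and $\tilde{\mathcal{X}}\to\Delta$ with central fibers $X$ and $\tilde X$, whose general fibers $\mathcal{X}_t,\tilde{\mathcal{X}}_t$ are projective, and such that the sequence of flops linking $X$ and $\tilde X$ deforms to flops linking $\mathcal{X}_t$ and $\tilde{\mathcal{X}}_t$. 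For general $t$ the fibers $\mathcal{X}_t$ and $\tilde{\mathcal{X}}_t$ are then birational projective minimal models, so Theorem \ref{kon}$(ii)$ gives $h^{p,q}(\mathcal{X}_t)=h^{p,q}(\tilde{\mathcal{X}}_t)$; since Hodge numbers are constant in K\"ahler families, I conclude $h^{2,1}(X)=h^{2,1}(\mathcal{X}_t)=h^{2,1}(\tilde{\mathcal{X}}_t)=h^{2,1}(\tilde X)$.

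The hard part will be exactly this last construction. The reduction of the first paragraph, the deformation invariance of Hodge numbers in K\"ahler families, and the projective birational invariance of Theorem \ref{kon}$(ii)$ are all standard; the obstacle is building the algebraic approximation so that it is compatible with the birational contraction/flop, which is where the hypothesis on Kodaira dimension enters. Algebraic approximation for K\"ahler threefolds via the K\"ahler minimal model program is available only for Kodaira dimension $0$, $1$ and $3$, which is precisely why the case of Kodaira dimension $2$ must be excluded.
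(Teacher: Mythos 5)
Your third paragraph is, in essence, the paper's entire proof: algebraically approximate $X$ by a projective fiber $Y$ using Graf/Lin, deform the bimeromorphic map along with the family, apply invariance of Hodge numbers for birational projective minimal models to the general fibers, and conclude by deformation invariance of Hodge numbers in K\"ahler families. The paper runs this argument for all $(p,q)$ simultaneously, so your first two paragraphs are correct but redundant: the reduction to $h^{1,1}$ and $h^{2,1}$ via Hodge symmetry and Serre duality, and the separate treatment of $h^{1,1}$ through isomorphism in codimension one (which the paper does support, via Theorem \ref{iic1}, Proposition \ref{pln} and Remark \ref{b2h11}), buy nothing once the deformation argument is in place, since that argument yields every Hodge number at once.

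The step you flag as ``the hard part'' --- the flop-compatible simultaneous approximation --- is exactly where the paper does its work, and it is assembled from known results rather than built from scratch: (a) by Koll\'ar \cite[Theorem 4.9]{ko}, the bimeromorphic map $X\dashrightarrow\tilde X$ between such threefold minimal models is a composition of finitely many flops; (b) by Koll\'ar--Mori \cite[Theorem 12.6.2, Remark 12.6.3]{lm}, after shrinking $\Delta$ this sequence of flops extends over the base of the approximation $\mathcal{X}\to\Delta$, producing a deformation $\tilde{\mathcal{X}}\to\Delta$ of $\tilde X$ and a bimeromorphic map $\phi:\mathcal{X}\dashrightarrow\tilde{\mathcal{X}}$ over $\Delta$ extending $X\dashrightarrow\tilde X$; (c) the projectivity of the general fiber $\tilde{\mathcal{X}}_t=\phi(\mathcal{X}_t)$, which you assert as part of the wished-for construction, is not automatic and must be derived --- the paper gets it because $\tilde{\mathcal{X}}_t$ is K\"ahler by Kodaira--Spencer local stability of K\"ahler structures under small deformation \cite[Theorem 15]{KS}, and Moishezon because it is bimeromorphic to the projective $\mathcal{X}_t$, hence projective. (The general type case needs none of this: a K\"ahler Moishezon manifold is already projective, so Theorem \ref{kon}$(ii)$ applies directly.) With (a)--(c) supplied, your appeal to Theorem \ref{kon}$(ii)$ and to deformation invariance of Hodge numbers closes the proof exactly as in the paper; the missing content of your proposal is precisely these three inputs.
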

\begin{proof}
The general type case (i.e., the Kodaira dimension is the dimension of manifold) for algebraic approximation problem becomes trivial since every K\"ahler Moishezon manifold is projective and then one takes  the trivial deformation of this manifold.

Let $\pi:\tilde{\mathfrak X}\rightarrow \Delta$ be a small deformation of $X$ to some projective variety $Y$ by \cite{g,lin1,lin2}.
By assumption and \cite[Theorem 4.9]{ko}, the bimeromorphic map $X\dashrightarrow \tilde X$ is a composition of a finite sequence of flops.
Roughly speaking, a flop is a codimension-$2$ surgery operation, a sequence of which connects two minimal models in a bimeromorphic equivalence class.
It is given by removing a curve on which the canonical divisor admits degree $0$ and replacing it with another curve with the same property while there is a Cartier divisor that is negative on the first curve and positive on the second one.
By \cite[Theorem 12.6.2, Remrak 12.6.3]{lm}, up to shrinking $\Delta$, there exists a deformation
$\tilde\pi:\tilde{\mathfrak{X}}\rightarrow \Delta$ of $\tilde X$ and a bimeromorphic map $\phi: \mathfrak{X}\dashrightarrow \tilde{\mathfrak{X}}$ over $\Delta$ extending $X\dashrightarrow \tilde X$.
Let $\tilde Y$ be the image of $Y$ under $\phi$.
Then $\tilde Y$ and $Y$ are also connected by a finite sequence of flops as in the proof of \cite[Theorem 11.10]{lm}.
In summary, one obtains the diagram
$$
\xymatrix@R=0.25cm@C=0.6cm{
Y\ar[rr] \ar@{^{(}->}[ddr] && \tilde Y\ar@{^{(}->}[ddr]|\hole \\
&X\ar@{.>}[rr] \ar@{^{(}->}[d] & &\tilde X\ar@{^{(}->}[d]& \\
  &\mathfrak{X} \ar@{.>}[rr]^{\phi} \ar[dr]_{\pi}
                &  &    \tilde{\mathfrak{X}} \ar[dl]^{\tilde\pi}&    \\
               & & \Delta}.
$$

As $Y$ (resp., $\tilde Y$) is a small deformation of $X$ (resp., $\tilde X$), $\tilde Y$ is K\"ahler, Moishezon and thus projective.
Since $\tilde Y$ is a small deformation of $\tilde X$, $\tilde Y$ is K\"ahler by the fundamental Kodaira-Spencer's local stability theorem of K\"ahler structures (cf. \cite[Theorem 15]{KS} and also \cite{RwZ} for a new proof), and thus projective since it is also Moishezon.
So one obtains the equalities
$$
h^{p,q}(X)=h^{p,q}(Y)=h^{p,q}(\tilde Y)=h^{p,q}(\tilde X),
$$
where the first and third equalities follow from \cite[Proposition 9.20]{V} and also \cite[Theorem 1.3]{RZ15} for a general argument, and the second one is got by Theorem \ref{kon}.\eqref{stab-h} or Theorem \ref{kon}.\eqref{stab-b} with Corollary \ref{app1} and the bimeromorphic map between $\tilde Y$ and $Y$.
Hence, one completes the proof.
\end{proof}
%=======================================================

\subsection{Isomorphic complex manifolds in codimension one}
This subsection is to discuss the third item of Theorem \ref{exthm}.
We first need a useful proposition.
\begin{prop}[]\label{pln}
Let $f: X\dashrightarrow Y$ be a bimeromorphic map between compact complex manifolds, which are {isomorphic in codimension one}.
Then there are natural isomorphisms
$$
H^k(X;\mathbb{Z})\rightarrow H^k(Y;\mathbb{Z}),\ \text{for $k\leq 2,$}
$$
and also
$$
\sum_{p+q=2}h^{p,q}(X)=\sum_{p+q=2}h^{p,q}(Y).
$$
\end{prop}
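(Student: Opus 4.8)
The plan is to exploit that $X$ and $Y$ share a common dense open set whose complement has complex codimension at least two, so that low-degree integral cohomology cannot see it. Write $U := X - \tilde B \cong Y - B$ for the open set on which $f$ restricts to the biholomorphism, where $\tilde B \subseteq X$ and $B \subseteq Y$ are the analytic subsets of complex codimension $\geq 2$ furnished by Theorem \ref{exthm}$(iii)$. First I would record the purely topological principle that removing a closed analytic subset $A$ of complex codimension $\geq c$ from a complex manifold $M$ does not alter integral cohomology in degrees $\leq 2c-2$: from the long exact sequence of the pair $(M,M-A)$,
\[
\cdots \to H^k(M,M-A;\mathbb{Z}) \to H^k(M;\mathbb{Z}) \to H^k(M-A;\mathbb{Z}) \to H^{k+1}(M,M-A;\mathbb{Z}) \to \cdots,
\]
together with the vanishing $H^k(M,M-A;\mathbb{Z}) = 0$ for $k \leq 2c-1$ (cohomology with support in a subset of real codimension $\geq 2c$), the restriction map $H^k(M;\mathbb{Z}) \to H^k(M-A;\mathbb{Z})$ is an isomorphism for $k \leq 2c-2$. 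Applying this with $c=2$ to both pairs $(X,\tilde B)$ and $(Y,B)$ yields isomorphisms $H^k(X;\mathbb{Z}) \cong H^k(U;\mathbb{Z}) \cong H^k(Y;\mathbb{Z})$ for $k \leq 2$, all induced by restriction and by the biholomorphism over $U$, hence natural. This settles the first assertion.

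For the Hodge-number equality, the key observation is that the only $(p,q)$ with $p+q=2$ whose Hodge number is not yet known to be invariant is $(1,1)$. Indeed $\sum_{p+q=2} h^{p,q}(M) = h^{2,0}(M) + h^{1,1}(M) + h^{0,2}(M)$, and Corollary \ref{app1} already gives $h^{2,0}(X)=h^{2,0}(Y)$ and $h^{0,2}(X)=h^{0,2}(Y)$, so it suffices to prove $h^{1,1}(X)=h^{1,1}(Y)$. Here I would invoke Remark \ref{b2h11}, which asserts that $b_2(M) - h^{1,1}(M)$ is a bimeromorphic invariant; hence $b_2(X)-h^{1,1}(X) = b_2(Y)-h^{1,1}(Y)$. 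On the other hand, the first assertion gives $b_2(X) = \mathrm{rank}\, H^2(X;\mathbb{Z}) = \mathrm{rank}\, H^2(Y;\mathbb{Z}) = b_2(Y)$. Subtracting the two equalities yields $h^{1,1}(X)=h^{1,1}(Y)$, and therefore $\sum_{p+q=2} h^{p,q}(X) = \sum_{p+q=2} h^{p,q}(Y)$.

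The main obstacle is the topological input of the first paragraph, namely the vanishing of the relative groups $H^k(M,M-A;\mathbb{Z})$ for $k \leq 2c-1$. This is precisely where the hypothesis ``codimension at least two'' is genuinely used, and where one must know that $\tilde B$ and $B$ are honest analytic (or at least subanalytic, hence triangulable) subsets so that the codimension bound on the local cohomology is available; equivalently one may argue by transversality that the inclusion $M-A \hookrightarrow M$ induces isomorphisms on $\pi_k$ for $k<2c-1$ by stratifying $A$ into submanifolds of real codimension $\geq 2c$ and pushing spheres together with their bounding discs off $A$. Once this standard fact is in hand, the remainder of the argument is pure bookkeeping resting on Corollary \ref{app1} and Remark \ref{b2h11}.
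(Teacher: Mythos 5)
Your proof is correct, and your handling of the second assertion is exactly the paper's: the paper also deduces it from Corollary \ref{app1} (giving $h^{2,0}$ and $h^{0,2}$) together with Remark \ref{b2h11} and the equality $b_2(X)=b_2(Y)$ coming from the first assertion (giving $h^{1,1}$). For the first assertion, however, you take a route dual to the paper's. The paper (following Popa, \cite[Proposition $1.11$ in Chapter $4$]{popa}) applies Poincar\'e duality to reduce to natural isomorphisms $H_{2n-k}(X;\mathbb{Z})\to H_{2n-k}(Y;\mathbb{Z})$ for $k\leq 2$, and then argues that the diffeomorphism $X-\tilde B\cong Y-B$, whose complements have real codimension at least four, already ``sees'' all cycles of real dimension $\geq 2n-2$ (a general-position argument: such cycles, and the chains bounding them, can be pushed off a set of dimension $\leq 2n-4$). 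You instead stay in cohomology and use the long exact sequence of the pair $(M,M-A)$ together with the vanishing $H^k(M,M-A;\mathbb{Z})\cong H^k_A(M;\mathbb{Z})=0$ for $k\leq 2c-1$ when $A$ is a closed analytic subset of complex codimension $\geq c$. The two arguments rest on the same principle---a subset of real codimension $\geq 4$ is invisible to (co)homology in degrees $\leq 2$---and both require $\tilde B$ and $B$ to be genuine analytic (hence triangulable/stratifiable) subsets, a point you correctly flag. What your version buys is precision: the vanishing of cohomology with supports replaces the paper's informal ``sees all cycles'' step and avoids Poincar\'e duality altogether; what the paper's version buys is brevity, since it delegates the details to the cited lecture notes.
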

\begin{proof}
Here is a proof extracted from Popa's lecture notes {\cite[Proposition $1.11$ in Chapter $4$]{popa}}.
Poincar\'{e} duality implies that equivalently one can aim for natural isomorphisms
$$H_{2n-k}(X;\mathbb{Z})\rightarrow H_{2n-k}(Y;\mathbb{Z}),\ \text{for $k\leq 2$}$$
where $n$ is the complex dimension of $X$.
Now $X$ and $Y$ are diffeomorphic as real manifolds
outside closed subsets of real codimension at least four, and therefore this diffeomorphism
sees all $(2n-k)$-cycles on $X$ and $Y$ with $k\leq 2$, inducing the desired natural isomorphism.

The next assertion follows from Corollary \ref{app1} and Remark \ref{b2h11}.
\end{proof}

So as for the third item of Theorem \ref{exthm}, we just need:
\begin{thm}[{\cite[Lemma 4.3]{ko}}]\label{iic1}
Any two bimeromorphic minimal models $\tilde X$ and ${X}$ in Definition \ref{mm} under the bimeromorphic map $\pi$ are {isomorphic in codimension one}.
\end{thm}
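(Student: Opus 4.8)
The plan is to reduce the claim to the statement that a bimeromorphic map between two minimal models never contracts a divisor, and to prove this by comparing canonical divisors on a common resolution via the negativity lemma.

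First I would choose a common resolution of $\pi$: a smooth compact complex manifold $W$ together with proper bimeromorphic morphisms $p:W\to\tilde X$ and $q:W\to X$ with $q=\pi\circ p$ as meromorphic maps, obtained by resolving the closure of the graph of $\pi$. Since both $\tilde X$ and $X$ are $\mathbb{Q}$-factorial, $K_{\tilde X}$ and $K_X$ are $\mathbb{Q}$-Cartier, so the pullbacks $p^{*}K_{\tilde X}$ and $q^{*}K_X$ are well-defined $\mathbb{Q}$-divisors on $W$. Writing the two discrepancy formulas
$$
K_W=p^{*}K_{\tilde X}+\sum_i a_i E_i,\qquad K_W=q^{*}K_X+\sum_i b_i E_i,
$$
where $E_i$ ranges over all prime divisors on $W$ (so $a_i=0$ unless $E_i$ is $p$-exceptional, and likewise for the $b_i$), the terminality hypothesis in Definition \ref{mm} forces $a_i>0$ for every $p$-exceptional $E_i$ and $b_i>0$ for every $q$-exceptional $E_i$. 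Subtracting gives the key identity
$$
p^{*}K_{\tilde X}-q^{*}K_X=\sum_i (b_i-a_i)E_i=:D'.
$$

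Next I would pin down $D'$ by applying the negativity lemma twice. Because $K_X$ is nef, $q^{*}K_X$ is nef on $W$, hence $p$-nef, while $p^{*}K_{\tilde X}$ is numerically trivial on the fibers of $p$; therefore $-D'=q^{*}K_X-p^{*}K_{\tilde X}$ is $p$-nef. Since the non-$p$-exceptional components of $D'$ have $a_i=0$ and hence coefficient $b_i\geq 0$, the pushforward $p_{*}D'$ is effective, and the negativity lemma yields $D'\geq 0$, i.e. $b_i\geq a_i$ for all $i$. Running the symmetric argument with the roles of $p$ and $q$ exchanged — now using that $K_{\tilde X}$ is nef, so that $p^{*}K_{\tilde X}$ is $q$-nef and hence $D'$ itself is $q$-nef — gives $-D'\geq 0$, i.e. $a_i\geq b_i$ for all $i$. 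Combining, $a_i=b_i$ for every $i$ and $p^{*}K_{\tilde X}=q^{*}K_X$.

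Finally I would translate this equality of discrepancies into the geometric conclusion. Suppose some prime divisor $D\subseteq\tilde X$ were contracted by $\pi$, i.e. its image in $X$ had codimension at least two. Its strict transform on $W$ is not $p$-exceptional, so the corresponding coefficient $a$ vanishes, yet it is $q$-exceptional, so terminality of $X$ forces the corresponding $b>0$; this contradicts $a=b$. Hence $\pi$ contracts no divisor, and by the symmetric statement neither does $\pi^{-1}$; discarding the codimension $\geq 2$ images of the exceptional loci on each side then produces closed subsets $\tilde B\subseteq\tilde X$ and $B\subseteq X$ of codimension at least two with $\tilde X-\tilde B\simeq X-B$, which is exactly isomorphism in codimension one. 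The main obstacle I anticipate is foundational rather than conceptual: one must ensure that the discrepancy calculus and the negativity lemma, usually stated for algebraic varieties, are available in the compact complex (Fujiki/Moishezon) category for $\mathbb{Q}$-factorial terminal spaces, and that a common resolution with the stated properties genuinely exists; once these inputs are granted, the only place demanding care is the sign bookkeeping in the two applications of the negativity lemma.
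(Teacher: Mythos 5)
Your proof is correct and takes essentially the same route as the paper's: a common resolution $W$, comparison of the two discrepancy divisors of $K_W$ over $\tilde X$ and $X$, the Koll\'ar--Mori negativity lemma (the same \cite[Lemma 3.39]{lm2} the paper cites), and terminality forcing positive discrepancies on exceptional divisors, so that each exceptional divisor of one projection is exceptional for the other. The only cosmetic difference is that you run the negativity lemma in both directions to first establish the full crepancy equality $p^{*}K_{\tilde X}=q^{*}K_X$ and then derive the contradiction, whereas the paper applies it once to get $F\geq\tilde F$, concludes directly for that side, and invokes symmetry for the other.
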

The projective analogue of this theorem is well-known to bi-rationalists (cf. \cite[\S 7.18]{deb} for example or more recent \cite{Ka} by flops) and we outline a proof here for analytic geometer's convenience.
\begin{proof}[Proof of Theorem \ref{iic1}]
The proof heavily relies on the \lq negativity lemma' \cite[Lemma $3.39$]{lm2}:
Let $h:{Z}\rightarrow Y$ be a projective bimeromorphic  morphism between normal varieties and $-D$ an $h$-nef $\mathbb Q$-Cartier $\mathbb{Q}$-divisor on $Z$.
Then $h_*D$ is effective if and only if $D$ is.
Recall that a divisor on a normal variety is \emph{$f$-nef} for a projective morphism $f$ if it has nonnegative intersection with every curve contracted by $f$.
One applies a hyperplane section argument to reduce its proof to the surface case originally by \cite{gr,mum}.

By assumption, there exists a smooth complex variety $W$ with two projective bimeromorphic  morphisms $\jmath: W \dashrightarrow \tilde X$ and $\imath: W \dashrightarrow X$, and effective $\mathbb Q$-divisors $\tilde F$ and $F$ such that
$$
K_W\sim \jmath^*K_{\tilde X}+\tilde F\sim \imath^*K_X+F.
$$
Set $D=F-\tilde F$.
For any curve $C$ contracted by $\jmath$, one has
$$
D\cdot C=(\jmath^* K_{\tilde X}-\imath^* K_{X})\cdot C=-K_{X}\cdot \imath_* C\leq 0
$$
since $K_{X}$ is nef.
As $\jmath_*D=\jmath_* F$ is effective, the negativity lemma implies that $D$ is effective and thus $F\geq \tilde F$.
Since $X$ has terminal singularities, any $\jmath$-exceptional divisor appear in $\tilde F$ and also in $F$.
It is thus $\imath$-exceptional and implies that $\imath(Exc(\jmath))$ has codimension at least two.
Here $Exc(\jmath)$ denotes the exceptional locus of $\jmath$.
Analogously, $\jmath(Exc(\imath))$ has codimension at least two.
Hence,
$
\tilde X-\jmath(Exc(\jmath)\cup Exc(\imath))
$
and
$
X-\imath(Exc(\jmath)\cup Exc(\imath))
$
are isomorphic.
\end{proof}

\begin{rem}
The \lq nefness' assumption for the canonical divisors in Theorem \ref{iic1} can be weakened as \lq the canonical divisors are nef along the exceptional loci'.
\end{rem}

%==============================================================

%==============================================================
\appendix

\section{Blow-up formula for de Rham cohomologies}\label{appendix}

In this appendix, we give a new proof of the blow-up formula \eqref{1} for de Rham cohomologies by use of the relative de Rham cohomology in the sense of Godbillon \cite[Chapitre XII]{Go97}.
One finds that the de Rham case is much easier than the Dolbeault one. The easier thing here is the existence of smooth tubular neighborhood on the smooth manifolds while  holomorphic tubular neighborhood does not necessarily exist (even on the K\"ahler manifolds, cf. \cite{rw} and the references therein).

Assume that $M$ is a smooth manifold with dimension $n$ and let $N$ be a $k$-dimensional closed submanifold of $M$.
Consider the space of differential forms
$$
\mathcal{A}^{\bullet}(M,N)=\{\alpha\in\mathcal{A}^{\bullet}(M)\,|\,i^{\ast}\alpha=0\},
$$
where $i^{\ast}$ is the pullback of the inclusion $i:N\hookrightarrow M$.
Since $\mathcal{A}^{\bullet}(M,N)$ is closed under the action of the exterior differential operator $d$ we get a sub-complex of the de Rham complex $\{\mathcal{A}^{\bullet}(M),d\}$ which is called the \emph{relative de Rham complex} with respect to $N$:
$$
\xymatrix{
0 \ar[r]^{} & \mathcal{A}^{0}(M,N) \ar[r]^{d} & \mathcal{A}^{1}(M,N)  \ar[r]^{d} & \mathcal{A}^{2}(M,N) \ar[r]^{\quad d} & \cdots.}
$$
The associated cohomology, denoted by $H_{dR}^{\bullet}(M,N)$, is called the \emph{relative de Rham cohomology} of the pair $(M,N)$.
From definition, it is straightforward to verify that if $p>k$ then $H^{p}_{dR}(M,N)=H^{p}_{dR}(M)$.
In particular, there exists a short exact sequence of complexes
$$\label{short-exact-relative}
\xymatrix@C=0.5cm{
0 \ar[r]^{} & \mathcal{A}^{\bullet}(M,N) \ar[r]^{} & \mathcal{A}^{\bullet}(M)  \ar[r]^{i^{\ast}} & \mathcal{A}^{\bullet}(N) \ar[r]^{} & 0},
$$
which yields a long exact sequence
$$%\label{long-exact-relative}
\xymatrix@C=0.5cm{
\cdots\ar[r]^{} & H^{\bullet}_{dR}(M,N) \ar[r]^{} & H^{\bullet}_{dR}(M)  \ar[r]^{} & H^{\bullet}_{dR}(N) \ar[r]^{} & H^{\bullet+1}_{dR}(M,N) \ar[r]^{} & \cdots}.
$$

From now on, we follow the notations in Subsection \ref{sec-bu}.
%Assume that $X$ is a compact complex manifold,
%$\iota:Z\hookrightarrow X$ a closed complex submanifold of codimension $r\geq 2$,
Set $U:=X-Z$ and let $\jmath: U\rightarrow X$ be the inclusion.
Let $\mathcal{A}^{\bullet}(X,Z)$ be the \emph{relative de Rham complex}.
Then we obtain a short exact sequence
$$\label{mainexactseq1}
\xymatrix@C=0.5cm{
0 \ar[r]^{} & \mathcal{A}^{\bullet}(X,Z) \ar[r]^{} & \mathcal{A}^{\bullet}(X)  \ar[r]^{\imath^{\ast}} & \mathcal{A}^{\bullet}(Z) \ar[r]^{} & 0}
$$
%Let $\pi:\tilde{X}\rightarrow X$ be the blow-up of $X$ at $Z$ and $E:=\pi^{-1}(Z)$ the exceptional divisor.
%We have the following blow-up diagram
%\begin{equation}\label{blow-up-diag}
%\xymatrix{
%E \ar[d]_{\pi_{E}} \ar@{^{(}->}[r]^{\tilde{\iota}} & \tilde{X}\ar[d]^{\pi}\\
% Z \ar@{^{(}->}[r]^{\iota} & X.}
%\end{equation}
and analogously,
$$\label{mainexactseq2}
\xymatrix@C=0.5cm{
0 \ar[r]^{} & \mathcal{A}^{\bullet}(\tilde{X},E) \ar[r]^{} & \mathcal{A}^{\bullet}(\tilde{X})  \ar[r]^{\tilde{\imath}^{\ast}} & \mathcal{A}^{\bullet}(E) \ar[r]^{} & 0.}
$$
In particular, the blow-up diagram \eqref{blow-up-diag3} induces a commutative diagram of short exact sequences
\begin{equation}\label{s-e-diagram}
\xymatrix@C=0.5cm{
 0 \ar[r]^{} & \mathcal{A}^{\bullet}(X,Z) \ar[d]_{\pi^{* }} \ar[r]^{} & \mathcal{A}^{\bullet}(X) \ar[d]_{\pi^{* }} \ar[r]^{\imath^{* }} &  \mathcal{A}^{\bullet}(Z)\ar[d]_{\pi_{E}^*} \ar[r]^{} & 0 \\
0 \ar[r] &  \mathcal{A}^{\bullet}(\tilde{X},E) \ar[r]^{} &
   \mathcal{A}^{\bullet}(\tilde{X}) \ar[r]^{\tilde{\imath}^{* }} &
   \mathcal{A}^{\bullet}(E) \ar[r] &0. }
\end{equation}
Then the commutative diagram \eqref{s-e-diagram} gives a commutative diagram of long exact sequences
\begin{equation}\label{comm-diagram1}
\xymatrix@C=0.5cm{
   \cdots \ar[r]^{} & H^{k}_{dR}(X,Z) \ar[d]_{\pi^{* }} \ar[r]^{} &H^{k}_{dR}(X) \ar[d]_{\pi^{* }} \ar[r]^{} & H^{k}_{dR}(Z)\ar[d]_{\pi_{E}^*} \ar[r]^{} & H^{k+1}_{dR}(X,Z)\ar[d]_{\pi^{* }} \ar[r]^{} & \cdots \\
   \cdots \ar[r] & H^{k}_{dR}(\tilde{X},E) \ar[r]^{} &
  H^{k}_{dR}(\tilde{X}) \ar[r]^{} &
  H^{k}_{dR}(E)\ar[r]^{} &
  H^{k+1}_{dR}(\tilde{X},E) \ar[r] & \cdots. }
\end{equation}

Let $\mathcal{A}^{\bullet}_{c}(U)$ be the compactly supported de Rham complex of $U=X-Z$.
Then the chain map
$
\jmath_{\ast}:\mathcal{A}^{\bullet}_{c}(U)\rightarrow \mathcal{A}^{\bullet}(X)
$
has the image in the relative de Rham complex $\mathcal{A}^{\bullet}(X,Z)$.
Moreover, the morphism
$
\jmath_{\ast}:\mathcal{A}^{\bullet}_{c}(U)\rightarrow \mathcal{A}^{\bullet}(X,Z)
$
is quasi-isomorphic (cf. \cite[Proposition 13.11]{MT97}), i.e., there holds the isomorphism
\begin{equation}\label{iso1}
H^{\bullet}_{dR,c}(U)\cong H^{\bullet}_{dR}(X,Z),
\end{equation}
and similarly,
\begin{equation}\label{iso2}
H^{\bullet}_{dR,c}(\tilde{U})\cong H^{\bullet}_{dR}(\tilde{X},E).
\end{equation}
As $\pi|_{\tilde{U}}:\tilde{U}\rightarrow U$ is biholomorphic, we get the isomorphism for any $l\geq 0$
\begin{equation}\label{iso3}
H^{l}_{dR,c}(U)\cong H^{l}_{dR,c}(\tilde{U}).
\end{equation}
Due to the isomorphisms \eqref{iso1}-\eqref{iso3}, the diagram \eqref{comm-diagram1} equals to
\begin{equation}\label{comm-diagram2}
\xymatrix@C=0.5cm{
   \cdots \ar[r]^{} & H^{k}_{dR,c}(U) \ar[d]^{\cong}_{\pi^{* }} \ar[r]^{} &H^{k}_{dR}(X) \ar[d]_{\pi^{* }} \ar[r]^{} & H^{k}_{dR}(Z)\ar[d]_{\pi_{E}^*} \ar[r]^{} & H^{k+1}_{dR,c}(U)\ar[d]^{\cong}_{\pi^{* }} \ar[r]^{} & \cdots \\
   \cdots \ar[r] & H^{k}_{dR,c}(\tilde{U}) \ar[r]^{} &
  H^{k}_{dR}(\tilde{X}) \ar[r]^{} &
  H^{k}_{dR}(E)\ar[r]^{} &
  H^{k+1}_{dR,c}(\tilde{U}) \ar[r] & \cdots. }
\end{equation}
Note that since
$
\pi:\tilde{X}\rightarrow X$ and $\pi_{E}:E\rightarrow Z
$
are proper surjective holomorphic maps,
$
\pi^{\ast}:H^{k}_{dR}(X)\rightarrow H^{k}_{dR}(\tilde{X})
$
and thus
$
\pi_{E}^{\ast}:H^{k}_{dR}(Z)\rightarrow H^{k}_{dR}(E)
$
are injective by \cite[Theorem 3.1]{Wells74} and the Weak Five Lemma \cite[Lemma $3.3.(i)$]{mc}, respectively.

From \eqref{comm-diagram2} and Proposition \ref{cok-is}, an isomorphism induced by the pullback of the inclusion $\tilde{\imath}:E\hookrightarrow \tilde{X}$ follows
$$\label{blow-up-0}
{H^{k}_{dR}(\tilde{X})}/{\pi^{\ast}H^{k}_{dR}(X)}\cong
{H^{k}_{dR}(E)}/{\pi_{E}^{\ast}H^{k}_{dR}(Z)}.
$$
According to the Leray-Hirsch lemma, the de Rham cohomology of $E$ is a free $H^{\bullet}_{dR}(Z)$-module with the basis $\{1,\bm {t},\cdots,{\bm t}^{r-1}\}$, where $\bm t=c_{1}(\mathcal{O}_{E}(1))$.
This yields the isomorphism
$$
{H^{k}_{dR}(E)}/{\pi_{E}^{\ast}H^{k}_{dR}(Z)}\cong
\bigoplus_{i=1}^{r-1}H^{k-2i}_{dR}(Z).
$$
Observing that $\pi^{\ast}:H^{k}_{dR}(X)\rightarrow H^{k}_{dR}(\tilde{X})$ is injective,
one obtains the de Rham blow-up formula
$$
H^{k}_{dR}(\tilde{X})
\cong H^{k}_{dR}(X)\oplus \left(\bigoplus_{i=1}^{r-1}H^{k-2i}_{dR}(Z) \right).
$$

%==============================================================

\section*{Acknowledgement}
This work started from a conversation of Professor C. Voisin with the first author during the conference in honor of J.-P. Demailly on occasion of his $60$th birthday at Institut Fourier, Universit\'{e} Grenoble Alpes, and most of this work was completed during the first author's visit to Institute of Mathematics, Academia Sinica since September 2017.
He would like to express his gratitude to both institutes for their hospitality during his visit, especially Professors J.-P. Demailly, Jih-Hsin Cheng, Chin-Yu Hsiao.
The second and third authors would like to thank Departments of Mathematics at Universit\`{a} degli Studi di Milano and Cornell University for the hospitalities during their respective visits.
All the authors would like to express their gratitude to Professor J.-A. Chen for pointing out the reference \cite{Ka}, Dr. Jie Liu for many useful discussions on birational geometry, especially Theorem \ref{iic1}, and Professors D. Angella, Chun-Chung Hsieh, I-Hsun Tsai for their interest on our paper. Last, but not least, we sincerely thank  Dr. J. Stelzig for sending his preprint \cite{Jo18} which is quite useful for us to complete the proof of Proposition \ref{tech-prop}.

\end{document}